\documentclass{article}[11pt]
\usepackage{graphicx} 
\usepackage{amsfonts,amsmath,amssymb,cite}
\usepackage{amsmath,amsthm}
\usepackage{enumerate}
\usepackage{latexsym}
\usepackage[T1]{fontenc}
\usepackage{color}
\usepackage[dvipsnames]{xcolor}
\usepackage{tikz}
\usetikzlibrary{decorations.pathmorphing}
\usetikzlibrary{shapes.geometric}
\usetikzlibrary{svg.path}
\usetikzlibrary{calc,positioning}
\usepackage[margin=1in]{geometry}
\usepackage{etoolbox}
\usepackage{mathtools}

\usepackage{graphicx}
\usepackage{lineno}
\usepackage{tkz-graph}
\usepackage{tkz-berge}
\usepackage{hyperref}
\usepackage{color}

\hypersetup{colorlinks=true}

\hypersetup{colorlinks=true, linkcolor=blue, citecolor=blue,urlcolor=blue}

\newtheorem{thm}{Theorem}

\newtheorem{ob}{Observation}
\newtheorem{lem}[thm]{Lemma}

\newtheorem{prop}[thm]{Proposition}
\newtheorem{ques}{Question}
\newtheorem{prob}{Problem}

\theoremstyle{definition}

\theoremstyle{remark}
\newtheorem{remark}[thm]{Remark}

\newcommand{\hmk}{\chi_{\mu_{k}}}
\newcommand{\hmd}{\chi_{\mu_{2}}}
\newcommand{\hm}{\chi_{\mu}}

\newcommand{\strong}{\boxtimes}

\newcommand{\mvc}{\chi_{\mu}}

\tikzstyle{vertex}=[circle, draw, inner sep=0pt, minimum size=6pt]

\textwidth15.7cm
\textheight20.0cm
\oddsidemargin 0.4cm
\evensidemargin 0.4cm
\voffset-1cm

\title{Distance mutual-visibility coloring: relations with (total) domination, exact distance graphs and graph products}

\author{Saneesh Babu$\,^{a,}$\footnote{\tt saneeshbabu1996@gmail.com}, Bo\v{s}tjan Bre\v{s}ar$\,^{b,c,}$\footnote{\tt bostjan.bresar@um.si}, Aparna Lakshmanan S$\,^{a,}$\footnote{\tt aparnals@cusat.ac.in} \, and Babak Samadi$\,^{c,}$\footnote{\tt babak.samadi@imfm.si}\\ \\
$^a$ Department of Mathematics, Cochin University of Science and Technology, India\\
$^b$ Faculty of Natural Sciences and Mathematics, University of Maribor, Slovenia \\
$^c$ Institute of Mathematics, Physics and Mechanics, Ljubljana, Slovenia\\
}
\date{}

\begin{document}
\maketitle

\begin{abstract}
The concept of mutual-visibility introduced by Di Stefano in 2022 has already been extended in several directions. In $k$-distance mutual-visibility, a set $S$ of vertices in a graph $G$ admits the defining condition if for any two vertices in $S$ there exists a geodesic between them of length at most $k$ whose internal vertices are not in $S$. Recently, the mutual-visibility chromatic number of a graph was introduced, and in this paper we combine both ideas as follows. For any positive integer $k$, the $k$-distance mutual-visibility coloring of $G$ is a partition of $V(G)$ into subsets each of which is a $k$-distance mutual-visibility set. The minimum cardinality of such a partition is called the $k$-distance mutual-visibility chromatic number of $G$ and is denoted by $\chi_{\mu_k}(G)$. The special cases $k=1$ and $k\ge {\rm diam}(G)$ lead to the clique cover number $\theta(G)$ and the mutual-visibility chromatic number $\chi_{\mu}(G)$, respectively. So, our attention is given to $1<k<{\rm diam}(G)$ with $k=2$ producing the most interesting results. We prove that $\chi_{\mu_2}(G)\le |V(G)|/2$ and present large families of graphs that attain the bound. In addition, $\chi_{\mu_2}(G)$ is bounded from above by the total domination number $\gamma_t(G)$ if $G$ is isolate-free, while in graphs with girth at least $7$, $\chi_{\mu_2}(G)$ is bounded from below by the domination number $\gamma(G)$. A surprising relation with the exact distance-2 graphs is found, which results in the equality $\theta(G^{[\natural2]})=\gamma_{t}(G)$ holding for any isolate-free graph $G$ with girth at least $7$. 
The relation is explored further in the study of the lexicographic product of graphs, where we prove the sharp inequalities $\chi_{\mu_{2}}(G\circ H)\leq \theta(G^{[\natural2]})\leq \theta\big{(}(G\circ H)^{[\natural2]}\big{)}$. We also prove a sharp lower bound on the $2$-distance mutual-visibility chromatic number in the Cartesian product of two connected graphs, and present the bound $\chi_{\mu_2}(Q_n)\le \gamma(Q_n)$ that holds for all $n$-cubes $Q_n$. Moreover, the canonical upper bound $\chi_{\mu_k}(G \boxtimes H) \leq \hmk(G)\hmk(H)$, where $G \boxtimes H$ is the strong product of any two graphs and $k$ is arbitrary, is also proved and it is widely sharp. Finally, we characterize the block graphs $G$, for which 
$\chi_{\mu_k}(G)=\chi_{\mu}(G)$, where $k = {\rm diam}(G) - 1$. 
\end{abstract} 
\textbf{Math.\ Subj.\ Class. 2020:} 05C12, 05C15, 05C69, 05C76\vspace{0.5mm}\\
{\small \textbf{Keywords:} distance mutual-visibility, distance mutual-visibility coloring, (total) domination, exact distance-2 graph, graph product, geodesic.

\section{Introduction}

In this paper, we consider the coloring of a graph by which its vertices are partitioned into $k$-distance mutual-visibility sets. The concept of $k$-distance mutual-visibility was introduced recently by Cera et al.~\cite{cgvy} as a variation of mutual-visibility introduced by Di Stefano~\cite{DiS} in 2022. The introduction of mutual-visibility in graphs had several motivations. For instance, it is related to the classical no-three-in-line problem on the one hand, and is also close to the problem of repositioning of mobile robots in the plane on the other hand; see~\cite{DiS} for more detail. 
Although mutual-visibility is a recent concept, it has gained a lot of attention; for instance, one can already find over 20 citations of the seminal paper in the MathSciNet.  

The main concepts studied in this paper are defined as follows. Given a graph $G$ and vertices $u,v\in V(G)$, a {\em $u,v$-geodesic} is a shortest path with $u$ and $v$ as its endvertices. The {\em distance}, $d_G(u,v)$, between $u$ and $v$ in $G$ is the length of a $u,v$-geodesic. A subset $M\subseteq V(G)$ is a {\em mutual-visibility set} if for every $u,v\in M$, there exists a $u,v$-geodesic whose internal vertices are not in $M$. The maximum cardinality of a mutual-visibility set in $G$ is the {\em mutual-visibility number} of $G$, denoted $\mu(G)$. The mentioned variation of mutual-visibility from~\cite{cgvy} is defined as follows. Given a graph $G$ and a positive integer $k$, a subset $M\subseteq V(G)$ is a {\em $k$-distance mutual-visibility set} if for every $u,v\in M$, there exists a $u,v$-geodesic of length at most $k$ whose internal vertices are not in $M$. That is, in addition to the condition of mutual-visibility, vertices in $M$ need to be pairwise at distance at most $k$. The maximum cardinality of a $k$-distance mutual-visibility set in $G$ is the {\em $k$-distance mutual-visibility number} of $G$, denoted $\mu_k(G)$. 

In the recent manuscript~\cite{KKVY}, a new version of vertex coloring of a graph $G$ was introduced, which results in a partition of $V(G)$ into mutual-visibility sets. The minimum number of colors for which the graph $G$ admits such a partition is the {\em mutual-visibility chromatic number} of $G$, denoted $\mvc(G)$. Several general properties as well as bounds and exact values in well-known classes of graphs were found for the mutual-visibility chromatic number in~\cite{KKVY}. In addition, the independent version of mutual-visibility coloring was introduced and studied in~\cite{BPSY} continuing the study from~\cite{KKVY}. 

Vertices in a graph with the mutual-visibility property can represent entities in a computer or social network aiming to communicate efficiently and confidentially, ensuring messages do not pass through other entities. A mutual-visibility coloring partitions the vertex set into the minimum number of mutual-visibility sets, where each pair of vertices in a set has a shortest path with no internal vertices in that set. Various such partitions exist. From a practical perspective, if the distance between the vertices in each color partition is less than $k$ for some positive integer $k$, i.e., each partite set is a $k$-distance mutual-visibility set, then this partition is more efficient as the distance between points of communication is less than $k$. So we introduce the following concept.

Given a graph $G$ and a positive integer $k$, the minimum cardinality of a partition of $V(G)$ into $k$-distance mutual-visibility sets is the {\em $k$-distance mutual-visibility chromatic number} of $G$, denoted by $\hmk(G)$. The main purpose of this paper is to study this new coloring invariant, connecting it with the related studies in~\cite{BPSY,KKVY}, finding its general properties and its behavior in well-known classes of graphs. 

In the remainder of this section we provide basic terminology and establish notation, while in the next section we give some preliminary results about the $k$-distance mutual-visibility chromatic number.  In Section~\ref{sec:dom}, we focus on  possibly the most interesting case when $k=2$, and give several bound on the $2$-distance mutual visibility chromatic number. While $\hmd(G)\le \lceil |V(G)|/2\rceil$ holds in any graph $G$, and $\hmd(G)\le \gamma_t(G)$ holds in any graph $G$ with no isolated vertices, we also prove that $\hmd(G)\ge \gamma(G)$ if $G$ has girth greater than $6$. The investigation leads to the surprising result that the clique cover number of the exact distance-2 graph of $G$ is equal to the total domination number of $G$ as soon as $G$ has girth greater than $6$ and no isolated vertices. In Section~\ref{sec:lex}, we continue this line of investigation and for the lexicographic product $G\circ H$ of connected graphs $G$ and $H$ prove that  $\chi_{\mu_{2}}(G\circ H)\leq \theta(G^{[\natural2]})\leq \theta\big{(}(G\circ H)^{[\natural2]}\big{)}$. Under some additional assumptions on $G$ we prove that $\chi_{\mu_{2}}(G\circ H)$ is bounded from above by $\hmd(G)$. In Section~\ref{sec:stong}, we consider the strong product $G\boxtimes H$ of two graphs and prove the sharp upper bound $\hmk(G \boxtimes H) \leq \hmk(G)\hmk(H)$ that holds for any positive integer $k$. 
In Section~\ref{sec:cart}, a sharp lower bound on the $2$-distance mutual-visibility chromatic number is proved, which involves also the $2$-packing numbers of both factors. In addition, we prove that $\hmd(Q_n)\le \gamma(Q_n)$ holds for all $n$-cubes and provide some properties which indicate that the equality might hold. Most of the above mentioned (and other) bounds in this paper are sharp, and we provide different families of graphs that attain them. Finally, in Section~\ref{sec:block}, we characterize the block graphs $G$ for which $\chi_{\mu_{{\rm diam}(G)-1}}(G)=\chi_{\mu}(G)$. 


\subsection{Basic notations and terminologies}

Throughout this paper, we  use the notation $[n]=\{1,\ldots,n\}$ to denote the set of integers from $1$ to $n$. All graphs considered in this paper are simple and undirected. For a graph $G$ and a vertex $u\in V(G)$, the \emph{open neighborhood} of $u$ is denoted by $N_{G}(u)$, and its {\em closed neighborhood} is $N_G[u]=N_G(u)\cup \{u\}$. The {\em diameter} of $G$ is defined by ${\rm diam}(G)=\max\{d_G(u,v):\,u,v\in V(G)\}$. A subgraph $H$ of a graph $G$ is \emph{convex} if for any vertices $x,y\in V(H)$, every $x,y$-geodesic in $G$ lies entirely in $H$. The length of a shortest cycle in a graph $G$ is the {\em girth} of $G$, denoted by $g(G)$. In a forest $F$, we set $g(F)=\infty$. We use~\cite{West} as a reference for terminology and notation which are not explicitly defined here.

Given a graph $G$, a set $D\subseteq V(G)$ is a \textit{dominating set} (resp. \textit{total} {\em dominating set}) in $G$ if
for every $x\in V(G)\setminus D$ \big{(}resp.\ $x\in V(G)$\big{)}, there exists $y\in D\cap N_{G}(x)$. The minimum cardinality of a (total) dominating set in $G$ is the (\textit{total}) {\em domination number} of $G$ and is denoted by $\gamma(G)$ \big{(}resp.\ $\gamma_t(G)$\big{)}. 
A dominating set (resp. total dominating set) $D$ of cardinality $\gamma(G)$ \big{(}resp. $\gamma_{t}(G)$\big{)} is called a $\gamma(G)$-set (resp.\ $\gamma_t(G)$-set).

We abbreviate the expression $k$-distance mutual-visibility set (resp. coloring) to $k$DMV set (resp. coloring), where $k$ can be any positive integer. Given a graph $G$ and a positive integer $k$, by a $\chi_{\mu_{k}}(G)$-coloring and a $\mu_{k}(G)$-set, we mean a $k$DMV coloring and a $k$DMV set with $\chi_{\mu_{k}}(G)$ colors and of cardinality $\mu_{k}(G)$, respectively. 

For graphs $G$ and $H$, their \emph{Cartesian}, \emph{strong} and \emph{lexicographic} product graphs, denoted by $G\square H$, $G\boxtimes H$ and $G\circ H$, respectively, are defined as follows. Each product graph has the vertex set $V(G)\times V(H)$, while the adjacency rules are as follows:
\begin{itemize}
\item in $G \square H$, vertices $(g,h)$ and $(g',h')$ are adjacent if either ``$g=g'$ and $hh'\in E(H)$'' or ``$gg'\in E(G)$ and $h=h'$'';
\item vertices $(g,h)$ and $(g',h')$ in $G \boxtimes H$ are adjacent if either ``one of the two conditions for the Cartesian product holds'' or ``$gg' \in E(G)$ and $hh' \in E(H)$'';
\item in $G \circ H$, vertices $(g,h)$ and $(g',h')$ are adjacent if either $gg'\in E(G)$ or ``$g=g'$ and $hh'\in E(H)$''.
\end{itemize}

In each of these product graphs, the subgraph induced by vertices with the first (resp. second) coordinate fixed is isomorphic to $H$ (resp. $G$). These subgraphs are called $H$-\textit{fibers} and $G$-\textit{fibers}, respectively. More on graph products can be found in~\cite{HIK}.


\section{Preliminary results}

In this section, we list some basic results about the newly introduced concept and related graph invariants.   
We begin by examining the relationship between $\chi_{\mu_k}(G)$ for different $k$, where $1\leq k \leq {\rm diam}(G)$. Clearly, $\chi_{\mu_k}(G)=\chi_\mu(G)$ when $k\ge {\rm diam}(G)$. Moreover, we have $\chi_{\mu_1}(G)=\theta(G)$, where $\theta(G)$ is the {\em clique cover number}, equal to the chromatic number of the complement graph $\overline{G}$ of $G$. Equivalently, $\theta(G)$ is the smallest partition of the vertex set of $G$ into cliques. From the fact that if two vertices are mutually visible in the setting of distance $k$, they remain so when distance $k+1$ is considered, we obtain the following chain of inequalities.

\begin{ob} \label{ob:1}
For any graph $G$ of diameter $d$, 
$$\chi_\mu(G)=\chi_{\mu_d}(G)\leq \chi_{\mu_{d-1}}(G) \leq \ldots\leq \chi_{\mu_1}(G)=\theta(G).$$
\end{ob}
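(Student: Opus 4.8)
The plan is to reduce the whole chain to a single monotonicity step, namely $\chi_{\mu_{k+1}}(G)\le\chi_{\mu_k}(G)$ for each $k$ with $1\le k\le d-1$, and then to combine these inequalities with the two endpoint equalities that have already been noted in the text. The key point, which is essentially the remark preceding the statement, is that every $k$DMV set is also a $(k+1)$DMV set: if $M\subseteq V(G)$ and $u,v\in M$ admit a $u,v$-geodesic of length at most $k$ all of whose internal vertices avoid $M$, then that same geodesic has length at most $k+1$ and still avoids $M$ in its interior, so the defining condition of a $(k+1)$DMV set is satisfied. (For $u=v$ the trivial geodesic of length $0$ always works, so the degenerate case causes no trouble.)

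From this inclusion of set families, I would deduce the inequality at the level of colorings: any partition of $V(G)$ into $k$DMV sets is simultaneously a partition into $(k+1)$DMV sets, hence a valid $(k+1)$DMV coloring, which yields $\chi_{\mu_{k+1}}(G)\le\chi_{\mu_k}(G)$. Applying this for $k=1,2,\ldots,d-1$ produces the full chain $\chi_{\mu_d}(G)\le\chi_{\mu_{d-1}}(G)\le\cdots\le\chi_{\mu_1}(G)$.

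It then remains to identify the two ends. For $k=d={\rm diam}(G)$, every geodesic of $G$ has length at most $d$, so the length restriction in the definition is vacuous and a $d$DMV set is exactly a mutual-visibility set; therefore $\chi_{\mu_d}(G)=\chi_{\mu}(G)$. For $k=1$, a $u,v$-geodesic of length at most $1$ that avoids $M$ internally exists precisely when $u$ and $v$ are adjacent (a path of length $\le 1$ has no internal vertices at all), so a $1$DMV set is exactly a clique of $G$, and a $1$DMV coloring is a partition of $V(G)$ into cliques; hence $\chi_{\mu_1}(G)=\theta(G)$.

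I do not expect any genuine obstacle here: the argument is a direct unwinding of the definitions, and the only points meriting a word of care are the trivial diagonal pair $u=v$ and the collapse of the chain when ${\rm diam}(G)\le 1$, both of which are immediate.
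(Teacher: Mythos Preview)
Your argument is correct and is exactly the reasoning the paper has in mind: the observation is stated without proof, with only the preceding sentence ``if two vertices are mutually visible in the setting of distance $k$, they remain so when distance $k+1$ is considered'' offered as justification, which is precisely your monotonicity step. Your treatment of the two endpoint identifications $\chi_{\mu_d}(G)=\chi_\mu(G)$ and $\chi_{\mu_1}(G)=\theta(G)$ also matches the paper's remarks preceding the observation.
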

	
Next, we present a trivial lower bound on $\chi_{\mu_k}$ based on the size of a largest $k$DMV set.

\begin{ob} \label{ob:2}
For any graph $G$, $\chi_{\mu_k}(G)\geq \left\lceil \frac{|V(G)|}{\mu_k(G)} \right\rceil$.
\end{ob}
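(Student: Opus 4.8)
The plan is a one-line pigeonhole argument of exactly the same flavour as the inequality $\chi(G)\ge |V(G)|/\alpha(G)$ relating the chromatic number and the independence number. First I would fix a $\chi_{\mu_k}(G)$-coloring, i.e.\ a partition $V(G)=V_1\cup\cdots\cup V_t$ with $t=\chi_{\mu_k}(G)$ in which every color class $V_i$ is a $k$DMV set. By the definition of the $k$-distance mutual-visibility number, no $k$DMV set has more than $\mu_k(G)$ vertices, so $|V_i|\le \mu_k(G)$ for each $i\in[t]$.

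Summing these bounds over all color classes gives
$$|V(G)|=\sum_{i=1}^{t}|V_i|\le t\cdot \mu_k(G)=\chi_{\mu_k}(G)\cdot \mu_k(G),$$
whence $\chi_{\mu_k}(G)\ge |V(G)|/\mu_k(G)$. Since $\chi_{\mu_k}(G)$ is an integer, the right-hand side may be replaced by its ceiling, yielding $\chi_{\mu_k}(G)\ge \lceil |V(G)|/\mu_k(G)\rceil$, as claimed.

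There is no real obstacle in this argument; the only point worth noting for completeness is that all the quantities involved are well-defined. A $k$DMV coloring of any graph exists, since partitioning $V(G)$ into singletons works (a one-element set vacuously meets the visibility condition), so $\chi_{\mu_k}(G)$ is finite; and $\mu_k(G)\ge 1$ for the same reason, so dividing by $\mu_k(G)$ is legitimate. Hence the statement follows directly.
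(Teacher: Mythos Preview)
Your proof is correct and is precisely the standard pigeonhole argument the paper has in mind; in fact, the paper states this as an observation without proof, calling it a ``trivial lower bound,'' so your write-up simply makes the intended reasoning explicit.
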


Let $k$ be a positive integer and $G$ be a connected graph. A set $D \subseteq V(G)$ is a \emph{distance $k$-dominating set} of $G$ if every vertex in $V(G)\setminus D$ is within distance $k$ from some vertex in $D$. The \emph{distance $k$-domination number} of $G$, denoted $\gamma_k(G)$, is the minimum cardinality of such a set~\cite{Henning}. We now establish a lower bound for $\hmk(G)$ using this parameter.

\begin{prop}
For any graph $G$, $\gamma_k(G) \leq \hmk(G)$.
\end{prop}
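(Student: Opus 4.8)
The plan is to build, from any optimal $k$DMV coloring of $G$, a distance $k$-dominating set whose size is at most the number of colors, by retaining one representative vertex from each color class.

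Concretely, I would set $t=\hmk(G)$ and fix a $\hmk(G)$-coloring $\{V_1,\ldots,V_t\}$ of $V(G)$, so that each $V_i$ is a nonempty $k$DMV set. For every $i\in[t]$ choose an arbitrary vertex $v_i\in V_i$ and put $D=\{v_1,\ldots,v_t\}$; since the $V_i$ are pairwise disjoint and nonempty, $|D|=t$. The claim is that $D$ is a distance $k$-dominating set of $G$. To verify this, take any vertex $u\in V(G)\setminus D$ and let $j\in[t]$ be the unique index with $u\in V_j$. Then $u\neq v_j$, and since $V_j$ is a $k$DMV set, the two distinct vertices $u$ and $v_j$ of $V_j$ are joined by a geodesic of length at most $k$; in particular $d_G(u,v_j)\leq k$. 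Hence every vertex outside $D$ is within distance $k$ of a vertex of $D$, so $D$ is a distance $k$-dominating set and $\gamma_k(G)\leq |D|=t=\hmk(G)$.

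I do not anticipate a genuine obstacle: the statement follows immediately from the definitions, using only the ``distance at most $k$'' half of the $k$DMV condition (the visibility requirement on internal vertices of the geodesic is not needed here). The two minor points to keep track of are that exactly one vertex is kept from each color class, so that $|D|$ equals the number of colors rather than something larger, and that if $G$ is disconnected then every $k$DMV set with at least two vertices — hence every color class — lies within a single connected component, so that $D$ meets each component and the distance-$k$ domination argument above applies verbatim inside each component.
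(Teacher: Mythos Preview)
Your proposal is correct and follows essentially the same approach as the paper: pick one representative from each color class of an optimal $k$DMV coloring and observe that the resulting set is a distance $k$-dominating set, using only the distance-at-most-$k$ part of the $k$DMV condition. Your write-up is slightly more detailed (you explicitly justify $|D|=t$ and address the disconnected case), but the argument is the same.
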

\begin{proof}
Let $\mathcal{S}=\{S_1,\ldots,S_{\hmk(G)}\}$ be a $\hmk(G)$-coloring. Construct a set $D$ by selecting one vertex from each color class $S_i$. Since $\mathcal{S}$ is a partition of $V(G)$ and because $S_{i}$ is a $k$DMV set for each $i\in[\hmk(G)]$, each vertex in $V(G)\setminus D$ is within distance $k$ from a chosen representative in $D$. So, $D$ is a distance $k$-dominating set in $G$. Thus, $\gamma_k(G) \leq|D|=\hmk(G)$.
\end{proof}	

In the following proposition, we determine the exact values of $\chi_{\mu_{k}}$ for all positive integers $k$ in two basic graph classes, namely, paths and cycles.

\begin{prop}\label{str0}
Let $k\geq1$. Then,\vspace{1.5mm}\\
$(i)$ For each $n\geq1$, $\chi_{\mu_{k}}(P_{n})=\left\lceil\frac{n}{2}\right\rceil$.\vspace{1mm}\\
$(ii)$ For each $n\geq3$, $\chi_{\mu_{k}}(C_{n})=\left \{
\begin{array}{lll}
\left\lceil\frac{n}{3}\right\rceil & \mbox{if}\ n\leq3k,\vspace{0.5mm}\\
\left\lceil\frac{n}{2}\right\rceil & \mbox{otherwise}.
\end{array}
\right.$
\end{prop}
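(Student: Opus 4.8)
\emph{Part $(i)$.} The plan is to determine $\mu_k(P_n)$ exactly, apply Observation~\ref{ob:2} for the lower bound, and match it via Observation~\ref{ob:1}. Since in a path $P_n=v_1\cdots v_n$ the geodesic between $v_i$ and $v_j$ is the unique subpath $v_i\cdots v_j$, a $k$DMV set cannot contain three vertices $v_a,v_b,v_c$ with $a<b<c$ (otherwise $v_b$ would be an internal vertex of the geodesic between $v_a$ and $v_c$ and lie in the set). Hence $\mu_k(P_n)\le 2$, with equality for $n\ge 2$ and $\mu_k(P_1)=1$, so Observation~\ref{ob:2} gives $\chi_{\mu_k}(P_n)\ge\lceil n/2\rceil$. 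The reverse inequality follows from Observation~\ref{ob:1} together with $\theta(P_n)=\lceil n/2\rceil$: the only cliques of $P_n$ are vertices and edges, so covering $P_n$ by $\lfloor n/2\rfloor$ edges (and one extra vertex when $n$ is odd) is an optimal clique cover.

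\emph{Part $(ii)$, the case $n>3k$.} Write $C_n=v_0\cdots v_{n-1}v_0$ and use that the only two paths between two vertices are the two arcs of $C_n$. First I would show $\mu_k(C_n)\le 3$ for all $k$: if a $k$DMV set $M$ had four elements, ordered cyclically as $m_0,m_1,m_2,\dots$, then both arcs between $m_0$ and $m_2$ contain a vertex of $M$ in their interior ($m_1$ on one side, $m_3$ on the other), so $m_0$ and $m_2$ do not see each other. When $n>3k$ I would refine this: if $M=\{m_0,m_1,m_2\}$ with arc-lengths $g_0,g_1,g_2$ summing to $n$, then for each consecutive pair the only arc whose interior avoids $M$ must itself be a shortest path of length at most $k$, forcing every $g_i\le k$ and hence $n\le 3k$, a contradiction. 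Thus $\mu_k(C_n)=2$, and as in part $(i)$, Observations~\ref{ob:1} and~\ref{ob:2} together with $\theta(C_n)=\lceil n/2\rceil$ (valid for $n\ge 4$, by the same reasoning) give $\chi_{\mu_k}(C_n)=\lceil n/2\rceil$.

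\emph{Part $(ii)$, the case $n\le 3k$.} Put $q=\lceil n/3\rceil$, so that $2q\le n\le 3q$ and $q\le k$. Splitting $C_n$ into three arcs of lengths at most $q$ (possible since $n\ge 3$) produces a $k$DMV set of size $3$, because each such arc is then a shortest path with no internal vertex in the set; together with $\mu_k(C_n)\le 3$ this gives $\mu_k(C_n)=3$, so Observation~\ref{ob:2} yields $\chi_{\mu_k}(C_n)\ge\lceil n/3\rceil$. For the matching upper bound I would color $v_i$ with $i\bmod q$: since $2q\le n\le 3q$, every color class has two or three vertices, and for each class the relevant arc-lengths are $q$ and, for a three-element class, also $n-2q$ — all at most $k$, each realized by a shortest path that misses the class — so every class is a $k$DMV set and $\chi_{\mu_k}(C_n)\le\lceil n/3\rceil$.

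The clique-cover computations and the modular-coloring verification are routine. The delicate point is the case $n>3k$: one must ensure that whenever two vertices of $C_n$ lie in a common $k$DMV set, the arc between them whose interior avoids the set is forced to be a genuine geodesic of length at most $k$. This is exactly where the hypothesis $n>3k$ is used — it rules out the complementary ``long'' arc and thereby pins $\mu_k(C_n)$ down to $2$ — and it is also where the small cases $C_3$ and $C_4$ should be checked separately.
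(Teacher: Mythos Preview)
Your proof is correct and follows the same overall scheme as the paper (Observation~\ref{ob:2} for the lower bounds, explicit colorings or $\theta$ for the upper bounds), with one noteworthy difference: you make the argument self-contained. For part~$(i)$ the paper sandwiches $\chi_{\mu_k}(P_n)$ between $\chi_\mu(P_n)$ (imported from~\cite{KKVY}) and $\theta(P_n)$ via Observation~\ref{ob:1}, whereas you derive $\mu_k(P_n)=2$ directly and invoke Observation~\ref{ob:2}; for part~$(ii)$ the paper simply cites~\cite{cgvy} for the values of $\mu_k(C_n)$, while you reprove them from scratch (the four-vertex obstruction and the arc-length argument forcing $g_0+g_1+g_2\le 3k$). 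Your upper-bound colorings --- pairing adjacent vertices for $n>3k$, and the residues-mod-$q$ coloring for $n\le 3k$ --- are exactly the paper's constructions. The trade-off is the expected one: the paper's proof is shorter by outsourcing the $\mu_k$ computations, yours avoids the external dependencies at the cost of a few extra lines.
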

\begin{proof}
By Observation \ref{ob:1}, $\chi_{\mu}(G) \leq \chi_{\mu_k}(G) \leq \theta(G)$ for any graph $G$. Since the complete subgraphs of $P_n$ are of cardinality at most 2, it follows that $\theta(P_n)=\lceil\frac{n}{2}\rceil$. On the other hand, $\chi_{\mu}(P_n)=\lceil\frac{n}{2}\rceil$ (see \cite{KKVY}). Hence, $\chi_{\mu_k}(P_n)=\lceil\frac{n}{2}\rceil$.
	
It is proved in \cite{cgvy} that $\mu_k(C_n)=3$ if $n\leq3k$, and $\mu_k(C_n)=2$ otherwise. So, by Observation~\ref{ob:2}, it follows that
\[\chi_{\mu_k}(C_n)\geq \left\lceil \frac{n}{\mu_k(C_n)}\right\rceil=\begin{cases}
\left\lceil \frac{n}{3}\right\rceil & \text{if } n\leq3k,\\
\left\lceil \frac{n}{2}\right\rceil & \text{otherwise}.
\end{cases}\]

To establish equality, it suffices to construct a $k$DMV coloring that achieves these values.
Let $C_{n}:v_{1}v_{2}\ldots v_{n}v_{1}$. For $n>3k$, coloring $v_{2i-1}$ and $v_{2i}$ with $i\in\{1,\ldots,\lfloor\frac{n}{2}\rfloor\}$, and $v_n$ with color $\lceil\frac{n}{2}\rceil$ if $n$ is odd, gives a $k$DMV coloring with $\lceil\frac{n}{2}\rceil$ colors. When $n \leq 3k$, we set $m=\left\lceil \frac{n}{3} \right\rceil$ and define $f: V(C_n)\rightarrow[m]$ by $f(v_i)=f(v_{m+i})=i$ for $i\in[m]$, and $f(v_{2m+i})=i$, for $i\in[n-2m]$. It is readily seen that $f$ is a $k$DMV coloring with $\lceil\frac{n}{3}\rceil$ colors.
\end{proof}

Now, we explore how the $k$DMV chromatic number behaves with respect to convex subgraphs. 
The following result is a variation of analogous results for (independent) mutual-visibility chromatic number given in~\cite[Lemma 1]{BPSY}. 
\begin{prop}\label{prop:convex}
If $H$ is a convex subgraph of a graph $G$, then $\chi_{\mu_k}(G)\geq \chi_{\mu_k}(H)$.
\end{prop}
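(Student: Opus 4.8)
The plan is to take a $\chi_{\mu_k}(G)$-coloring, restrict it to $V(H)$, and argue that each restricted color class is a $k$DMV set \emph{in} $H$, so that the restriction is a valid $k$DMV coloring of $H$ using at most $\chi_{\mu_k}(G)$ colors. Concretely, let $\mathcal{S}=\{S_1,\dots,S_{\chi_{\mu_k}(G)}\}$ be a $\chi_{\mu_k}(G)$-coloring of $G$, and for each $i$ put $S_i'=S_i\cap V(H)$; these classes partition $V(H)$ (some may be empty, which only helps).

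The key step is to show each nonempty $S_i'$ is a $k$DMV set in $H$. Take $u,v\in S_i'\subseteq S_i$. Since $S_i$ is a $k$DMV set in $G$, there is a $u,v$-geodesic $P$ in $G$ of length at most $k$ whose internal vertices avoid $S_i$. Here is where convexity enters: because $H$ is convex in $G$ and $u,v\in V(H)$, the geodesic $P$ lies entirely in $H$. Consequently $P$ is also a $u,v$-geodesic in $H$ (distances in a convex subgraph agree with those in $G$), it has length at most $k$, and its internal vertices, lying in $V(H)$ and avoiding $S_i$, in particular avoid $S_i'=S_i\cap V(H)$. Hence $S_i'$ is a $k$DMV set in $H$, the restricted coloring is a $k$DMV coloring of $H$, and therefore $\chi_{\mu_k}(H)\le \chi_{\mu_k}(G)$.

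I do not expect a genuine obstacle here: the only subtle point is making sure that "geodesic in $G$ contained in $H$" really is a "geodesic in $H$," but this is immediate from the definition of convexity, which forces $d_H(u,v)=d_G(u,v)$ for $u,v\in V(H)$. (One should note that the convexity hypothesis is exactly what makes this work — without it, a shortest $u,v$-path in $H$ could be strictly longer than $k$ even though one of length $\le k$ exists in $G$, and internal vertices of an in-$H$ geodesic might meet $S_i'$ while the chosen in-$G$ geodesic does not.) The argument is the natural $k$-distance analogue of \cite[Lemma 1]{BPSY}, with the length-at-most-$k$ condition carried along at no extra cost.
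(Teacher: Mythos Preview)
Your proof is correct and follows essentially the same approach as the paper: restrict a $\chi_{\mu_k}(G)$-coloring to $V(H)$ and use convexity to ensure that a $u,v$-geodesic in $G$ (of length at most $k$, with internal vertices outside the color class) lies in $H$ and is a $u,v$-geodesic there. The paper's version is terser, but the argument is the same.
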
	
\begin{proof}
Fix $k\in \mathbb{N}$. Let $H$ be a convex subgraph of $G$ and let $f:V(G)\to[\ell]$ be a $k$DMV coloring of $G$ with $\hmk(G)$ colors. Due to the convexity of $H$, geodesics between vertices in $H$ are the same regardless of being observed with respect to $H$ or with respect to $G$. Thus, the restriction of $f$ to $H$ is also a $k$DMV coloring of $H$. Therefore, $\hmk(H)\le \ell=\hmk(G)$.     
\end{proof}
    

\section{Relations with the order, (total) domination and exact distance-2 graphs}
\label{sec:dom}

Potentially the most interesting case of $k$DMV coloring is that when $k=2$. In this section, we will prove several results, which confirm this assessment, since $\hmd(G)$ is related to various established graph invariants. 

Given a rooted tree $T$ and any vertex $v\in V(T)$, by $T_{v}$ we mean the subtree of $T$ consisting of $v$ and all its descendants in $T$.

\begin{thm}\label{General}
For any connected graph $G$ of order $n$, $\chi_{\mu_{2}}(G)\leq \big{\lceil}\frac{n}{2}\big{\rceil}$. Moreover, this bound is sharp.
\end{thm}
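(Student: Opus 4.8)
The plan is to realize the bound as a near‑perfect matching in an auxiliary graph. Write $G^{2}$ for the graph on $V(G)$ in which $x,y$ are adjacent exactly when $1\le d_{G}(x,y)\le 2$. I would first show that $G^{2}$ admits a matching $M$ missing at most one vertex, and then take as color classes the pairs of $M$ together with the uncovered vertex (if any) as a singleton. This is a partition of $V(G)$ into $\lceil n/2\rceil$ classes, so it remains to check each class is a $2$DMV set. A singleton is trivially one. A pair $\{x,y\}$ with $xy\in E(G)$ is a $2$DMV set since the edge is an $x,y$‑geodesic with no internal vertices. A pair $\{x,y\}$ with $d_{G}(x,y)=2$ is a $2$DMV set since any common neighbor $w$ of $x$ and $y$ satisfies $w\notin\{x,y\}$, so $xwy$ is a geodesic of length $2$ whose only internal vertex avoids the class. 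Hence producing $M$ yields $\chi_{\mu_2}(G)\le\lceil n/2\rceil$.

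To produce $M$, I would reduce to trees. Fix a spanning tree $T$ of $G$; since $T$ is a spanning subgraph, $d_{G}(x,y)\le d_{T}(x,y)$ for all $x,y$, so $T^{2}$ is a spanning subgraph of $G^{2}$ and it is enough to find a matching of $T^{2}$ missing at most one vertex. For this, root $T$ at a vertex $r$ and build the matching greedily, processing the vertices in order of non‑increasing depth (so every vertex is processed after all its descendants): when a vertex $v$ is reached, let $c_{1},\dots,c_{s}$ be the children of $v$ not yet matched, match $c_{1}c_{2},\,c_{3}c_{4},\dots$ in consecutive pairs, and if $s$ is odd additionally match the remaining child $c_{s}$ with $v$. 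Every pair used is either two siblings (distance $2$ in $T$) or a parent together with a child (distance $1$ in $T$), hence an edge of $T^{2}$. The key point is that after $v$ is processed none of its children is left unmatched, so the only vertex that can be unmatched at the end is $r$; and since the matching covers an even number of vertices, $r$ is covered precisely when $n$ is even. This gives the desired matching of $T^{2}\subseteq G^{2}$.

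Putting the two steps together proves $\chi_{\mu_2}(G)\le\lceil n/2\rceil$. For sharpness I would simply invoke Proposition~\ref{str0}$(i)$, which gives $\chi_{\mu_2}(P_{n})=\lceil n/2\rceil$ for every $n$; richer (and less immediate) families attaining equality are treated afterwards. The only genuinely delicate part of the argument is the bookkeeping in the tree procedure — verifying that once a vertex has been processed all of its children are matched, so that nothing but the root can be missed — together with the (routine) monotonicity of distance under passing to a spanning subgraph; everything else is direct.
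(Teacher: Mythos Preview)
Your proof is correct and, at its core, does the same thing as the paper's: both arguments reduce to a spanning tree and pair up vertices that are either siblings (distance~$2$) or a parent with its child (distance~$1$). The paper presents this as an induction on $n$, peeling off two leaves of a deepest support vertex (or that support vertex together with its unique leaf child) and assigning the removed pair a fresh color, whereas you phrase the same pairing as a near-perfect matching in $T^{2}\subseteq G^{2}$ produced by a bottom-up greedy sweep. Your formulation cleanly separates two independent observations---that any pair at distance at most~$2$ is automatically a $2$DMV set, and that $T^{2}$ has a matching missing at most one vertex---while the paper's inductive packaging is a bit terser but blends these two facts together. Both rely on the same monotonicity $d_{G}\le d_{T}$, so neither is more general. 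For sharpness the paper goes beyond $P_{n}$, also exhibiting cycles $C_{n}$ with $n\ge 7$ and a family built by attaching odd paths to the vertices of an arbitrary graph; your citation of Proposition~\ref{str0}(i) alone already suffices for the claim as stated.
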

\begin{proof}
Let $T$ be any spanning tree of $G$. We proceed, by induction on $n$, to show that $\chi_{\mu_{2}}(T)\leq \lceil n/2\rceil$. The result is trivial for $n\in[2]$. On the other hand, the inequality holds when ${\rm diam}(T)=2$ as $\chi_{\mu_{2}}(K_{1,n-1})=2$ for $n\geq3$. So, we may assume that diam$(T)\geq3$.

Assume that the inequality holds for all trees $T'$ of order $n'<n$, and let $T$ be a tree of order $n$. Letting $r$ and $v$ be two leaves of $T$ with $d_{T}(r,v)=$ diam$(T)$, we root the tree $T$ at $r$. Let $u$ be the parent of $v$. Note that $u\neq r$ since diam$(T)\geq3$. 

Assume first that $|V(T_{u})|\geq3$ and that $v$ and $w$ are two leaves adjacent to $u$. We set $T'=T-\{v,w\}$. Using the induction hypothesis, we have $\chi_{\mu_{2}}(T')\leq \lceil(n-2)/2\rceil$. Then, any $\chi_{\mu_{2}}(T')$-coloring can be extended to a $2$DMV coloring $f$ of $T$ by assigning a new color to both $x$ and $y$. Hence, 
\begin{center}
$\chi_{\mu_{2}}(T)\leq|f\big{(}V(T)\big{)}|=\chi_{\mu_{2}}(T')+1\leq \lceil(n-2)/2\rceil+1=\lceil n/2\rceil$.
\end{center}

We now assume that $|V(T_{u})|=2$. Set $T''=T-\{u,v\}$. Then, $\chi_{\mu_{2}}(T'')\leq \lceil(n-2)/2\rceil$ by the induction hypothesis. In this case, any $\chi_{\mu_{2}}(T'')$-coloring can be extended to a $2$DMV coloring of $T$ by assigning a new color to both $u$ and $v$. Therefore,
\begin{center}
$\chi_{\mu_{2}}(T)\leq \chi_{\mu_{2}}(T'')+1\leq \lceil(n-2)/2\rceil+1=\lceil n/2\rceil$.
\end{center}
Thus, $\chi_{\mu_{2}}(T)\leq \big{\lceil}\frac{n}{2}\big{\rceil}$ holds in either case. 

We observe, by definitions, that every $\chi_{\mu_{2}}(T)$-coloring is a $2$DMV coloring of $G$ as well. This results in $\chi_{\mu_{2}}(G)\leq \chi_{\mu_{2}}(T)\leq \big{\lceil}\frac{n}{2}\big{\rceil}$, as desired.

The upper bound is widely sharp. For example, all cycles on at least seven vertices and paths achieve the equality in the upper bound (see Proposition \ref{str0}). Moreover, consider the disjoint union of any graph $H$ with the vertex set $\{v_1,v_2,\ldots,v_s\}$ and $s$ paths $P_{2t_{i}-1}$, for any positive integers $t_{i}$, with $i\in[s]$. Let $G$ be obtained from this union by joining an endvertex of $P_{2t_{i}-1}$ to $v_{i}$ for each $i\in[s]$. Since $d_{G}\big{(}V(P_{2t_{i}-1}),V(P_{2t_{j}-1})\big{)}>2$ for each distinct $i,j\in[s]$, it follows that $\chi_{\mu_{2}}(G)\geq \sum_{i=1}^{s}\chi_{\mu_{2}}(P_{2t_{i}-1})=\sum_{i=1}^{s}t_{i}=|V(G)|/2$. This leads to equality due to the upper bound $\chi_{\mu_{2}}(G)\leq|V(G)|/2$.
\end{proof}

Next, we prove that the total domination number is an upper bound for the $2$DMV chromatic number.  

\begin{thm}
\label{thm:gammatotal}
If $G$ is a graph with no isolated vertices, then $\chi_{\mu_2}(G) \leq \gamma_t(G)$.
\end{thm}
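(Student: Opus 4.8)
Let $D$ be a $\gamma_t(G)$-set, so every vertex of $G$ has a neighbor in $D$. The natural idea is to pair up the vertices of $D$ and use these pairs (together with their private "followers") as the color classes. First I would recall the standard fact that in an isolate-free graph, a minimum total dominating set $D$ can be chosen so that $G[D]$ has no isolated vertices; hence $G[D]$ has a spanning forest in which every component has at least one edge, and from this we can extract a set of edges of $G[D]$ that covers all of $D$ — equivalently, partition $D$ into parts, each of size $1$ or $2$, where singleton parts are avoided by... actually more carefully: choose a maximal matching $M$ in $G[D]$; every unmatched vertex of $D$ still has a neighbor in $D$ (since $G[D]$ is isolate-free), which is matched, so we may attach each unmatched vertex to one of its matched neighbors, producing a partition of $D$ into groups $D_1,\dots,D_t$ with $t\le |D|/2\le\gamma_t(G)$ (we may need to be slightly less ambitious and just aim for $t\le\gamma_t(G)$ classes total, which is all we need), where each $D_i$ has diameter at most... hmm, a group could be a star and have diameter $2$. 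Let me instead keep it simple: partition $D$ into $\lceil|D|/2\rceil$ or fewer pairs/singletons and assign the remaining vertices of $V(G)\setminus D$ to classes.

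**Refined plan.** Here is the cleaner route I would actually take. Take a $\gamma_t(G)$-set $D$ and a maximal matching $M=\{a_1b_1,\dots,a_mb_m\}$ in $G[D]$; let $U=D\setminus V(M)$ be the unmatched vertices. For each $u\in U$ pick a neighbor $\phi(u)\in D$; since $G[D]$ is isolate-free and $M$ is maximal, $\phi(u)\in V(M)$, say $\phi(u)\in\{a_j,b_j\}$. Now define color classes indexed by $j\in[m]$: put $a_j,b_j$ into class $C_j$, put every $u\in U$ with $\phi(u)\in\{a_j,b_j\}$ into $C_j$, and distribute the vertices of $V(G)\setminus D$ by sending each such vertex $x$ to the class containing one of its $D$-neighbors (which exists, as $D$ totally dominates). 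This uses $m\le\lfloor\gamma_t(G)/2\rfloor\le\gamma_t(G)$ colors — in fact I should be more careful and just argue $m\le\gamma_t(G)$, which is obvious. Wait, we want at most $\gamma_t(G)$ classes, and $m=|V(M)|/2\le|D|/2$, so this is fine with room to spare; the statement only asks for $\le\gamma_t(G)$.

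**Verifying each class is a 2DMV set.** This is the heart of the proof. Fix a class $C_j$. Its "core" is the edge $a_jb_j$. Every vertex $x\in C_j$ is either in $\{a_j,b_j\}$, or is adjacent to $a_j$ or $b_j$. So for any two vertices $x,y\in C_j$: if both are adjacent to the same core vertex, say $a_j$, they are at distance $\le 2$ via the path $x\,a_j\,y$, and the internal vertex $a_j\in C_j$ — that's a problem! So I need to be more careful: the geodesic's internal vertices must avoid $C_j$. The fix: route $x\,a_j\,b_j\,y$? That has length $3$, too long, and internal vertices in $C_j$ anyway. The real fix is that we should not put *too many* followers on the same core vertex, or we should route through a vertex outside $C_j$. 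Here is where the $2$-distance structure saves us: if $x$ is adjacent to $a_j$ and $y$ is adjacent to $a_j$, then consider whether $x$ itself works as the internal vertex — no. Hmm. Actually, I think the correct claim is: the path $x\,a_j\,y$ is a $x,y$-geodesic of length $2$ (assuming $xy\notin E(G)$, else length $1$ and we are done with no internal vertices), and we need its internal vertex $a_j$ to lie outside $C_j$, which fails. So the construction must be adjusted so that each core vertex $a_j$ "absorbs" at most... no, followers are unavoidable.

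The resolution, which I expect is the intended one: when $x,y\in C_j$ are both followers of $a_j$ (or one is $a_j$ itself), we instead observe $x$ and $y$ may well be adjacent, or if not, we pick a *different* common neighbor. But there need not be one. So actually I believe the right construction assigns each $C_j$ to be centered on an *edge* $a_jb_j$ with the property that $a_j$'s followers and $b_j$'s followers are kept separate only in the sense that $a_j$ (resp. $b_j$) sees all its followers at distance $1$, and two followers $x$ (of $a_j$) and $y$ (of $b_j$) are connected by $x\,a_j\,b_j\,y$... still length $3$. I think the genuinely correct approach, and the main obstacle to get right, is: put into $C_j$ only $a_j$, $b_j$, and for the followers route cleverly, OR — and this is likely it — realize a single follower per core vertex suffices because we have $m$ classes but potentially many followers; when $|U|$ is large we may need to *split* followers of one core vertex across multiple classes, but those classes have their own cores. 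So the honest statement is that $V(G)\setminus V(M)$ gets distributed among the $m$ classes, and within class $C_j$ we must check the $2$DMV condition holds — and the key lemma is that $a_j\,b_j$ plus at most one extra vertex adjacent to $a_j$ and at most one adjacent to $b_j$... this is getting complicated, so in the write-up I would: (1) state and prove a small lemma that $\{a_j,b_j\}\cup\{$one private follower of each$\}$ is $2$DMV, then (2) handle surplus followers by noting each can be placed in a class whose core vertex it is adjacent to, i.e. re-choose $\phi$ so followers are spread out — but if a vertex has all its $D$-neighbors equal to a single $a_j$, it must go to $C_j$. The main obstacle is exactly this congestion; I expect it is resolved by a parity/counting argument or by allowing a class to be a path $u\,a_j\,b_j\,w$ only when checking shows distance $\le2$ fails — at which point one realizes $u$ and $w$ are the only two followers and $d(u,w)$ might be $3$, contradicting that $C_j$ is $2$DMV, so such a class is *not* allowed and the surplus vertex must instead form part of a different class or we accept using one of the $\gamma_t(G) - m$ "spare" colors. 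Since $m\le\lfloor\gamma_t(G)/2\rfloor$, we have at least $\lceil\gamma_t(G)/2\rceil$ spare colors, giving ample room to peel off problematic followers into their own singleton or small classes. That slack is, I suspect, precisely why the bound $\gamma_t(G)$ (rather than something smaller) appears, and getting the bookkeeping of this slack right is the main thing I would need to nail down carefully.
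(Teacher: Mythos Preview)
Your approach is overcomplicated and never reaches a complete argument; the congestion problem you identify is real for your construction, and the vague appeal to ``spare colors'' at the end is not a proof. The difficulty stems from your decision to place the matching endpoints $a_j,b_j$ \emph{inside} the class $C_j$: once $a_j\in C_j$, the natural geodesic $x\,a_j\,y$ between two followers $x,y$ of $a_j$ has its internal vertex in $C_j$, and there is no general reason another length-$2$ geodesic exists.

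The paper's proof avoids this entirely by a much simpler construction. Take a $\gamma_t(G)$-set $D=\{v_1,\dots,v_{\gamma_t(G)}\}$ and define $D_1=N_G(v_1)$ and $D_i=N_G(v_i)\setminus\bigcup_{j<i}N_G(v_j)$. These sets partition $V(G)$ because $D$ is total dominating. The crucial point you missed: since $D_i\subseteq N_G(v_i)$ is contained in the \emph{open} neighborhood of $v_i$, the center $v_i$ itself is \emph{not} in $D_i$. Hence for any nonadjacent $u,w\in D_i$ the path $u\,v_i\,w$ is a geodesic of length $2$ whose internal vertex lies outside $D_i$. That is the whole proof, using exactly $\gamma_t(G)$ classes with no matching, no bookkeeping, and no spare-color argument. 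Your instinct to aim for roughly $\gamma_t(G)/2$ classes was the source of the trouble; the bound only asks for $\gamma_t(G)$, and using all of them makes the verification trivial.
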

\begin{proof}
Let $D=\{v_1,\ldots,v_{\gamma_{t}(G)}\}$ be a $\gamma_{t}(G)$-set. We construct a partition of $V(G)$ into $\gamma_{t}(G)$ color classes, each forming a $2$DMV set as follows. 
We set $D_{1}=N_{G}(v_{1})$ and $D_{i}=N_{G}(v_{i})\setminus \bigcup_{j=1}^{i-1}N_{G}(v_{j})$ for each $i\in[\gamma_{t}(G)]\setminus \{1\}$. By our choice of $D$, we deduce that $D_{i}\neq \emptyset$ for each $i\in[\gamma_{t}(G)]$. By construction, $D_{1},\ldots,D_{\gamma_{t}(G)}$ are pairwise disjoint. Moreover, they cover $V(G)$ as $D$ is a total dominating set in $G$.
		
We claim that each $D_{i}$ is a $2$DMV set. For any $u,v\in D_{i}$, we have two possibilities:\vspace{-1mm}
\begin{itemize}
\item $uv\in E(G)$; in this case, the path $uv$ is a $u,v$-geodesic of length $1$ with no internal vertices;\vspace{-1mm}
\item $uv\notin E(G)$; in this case, since $u,v\in N(v_{i})$, it follows that $uv_{i}v$ is a $u,v$-geodesic whose internal vertex $v_{i}$ is not in $D_{i}$.	
\end{itemize}\vspace{-1mm}
Thus, each $D_{i}$ is a $2$DMV set in $G$, and so $\{D_{i},\ldots,D_{\gamma_{t}(G)}\}$ is a vertex partition of $G$ into $2$DMV sets. Therefore, $\chi_{\mu_2}(G)\leq \gamma_t(G)$.
\end{proof}

It immediately follows from the following theorem that, if $G$ has girth at least $7$, the domination number is a lower bound for the $2$DMV chromatic number.  
	
\begin{lem}
\label{thm:closedneighborhood}
If $G$ is a graph with $g(G)\geq7$, then every color class $C$ in a $\chi_{\mu_2}(G)$-coloring is a subset of the closed neighborhood of some vertex in $G$. In particular, if $|C|\geq3$, then $C$ is a subset of the open neighborhood of some vertex in $G$.
\end{lem}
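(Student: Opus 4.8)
The plan is to fix an arbitrary color class $C$ of a $\chi_{\mu_2}(G)$-coloring and exploit two facts: $C$ is a $2$DMV set, so any two of its vertices lie at distance at most $2$; and $g(G)\ge 7$, which forbids cycles of length $3,4,5,6$. Two preliminary remarks do most of the work. First, if $u,v\in C$ with $d_G(u,v)=2$, then $u$ and $v$ have a \emph{unique} common neighbor (a second one would close a $4$-cycle), and the $2$DMV condition applied to the pair $u,v$ forces that common neighbor, call it $w$, to lie outside $C$. Second, $C$ contains no triangle (as $g(G)>3$), so if $|C|\ge 3$ then $C$ is not a clique, and hence contains a pair at distance exactly $2$.

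Granting these, I would prove that whenever $C$ is not a clique — which in particular covers every $C$ with $|C|\ge 3$ — one has $C\subseteq N_G(w)$, where $w$ is the unique common neighbor of a fixed pair $u,v\in C$ at distance $2$. Pick any $z\in C\setminus\{u,v\}$; since $d_G(u,z),d_G(v,z)\le 2$, there are three cases. If $z$ is adjacent to both $u$ and $v$, then $z$ is a common neighbor of $u$ and $v$, hence $z=w$, contradicting $w\notin C\ni z$. If $z$ is adjacent to exactly one of them, say $u$, let $w'$ be the unique common neighbor of $v$ and $z$; then $v\,w\,u\,z\,w'\,v$ is a closed walk of length $5$, and unless $w'=w$ its vertices are pairwise distinct (here one uses $w\notin C$, $z\notin\{u,v\}$, and $d_G(u,v)=2$, which prevents $w'$ from coinciding with $u$), producing a forbidden $5$-cycle; hence $w'=w$ and $z\in N_G(w)$. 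If $z$ is adjacent to neither $u$ nor $v$, let $w_1$ and $w_2$ be the unique common neighbors of $\{u,z\}$ and $\{v,z\}$; if neither equals $w$, then $w_1\ne w_2$ (a common value would be a common neighbor of $u$ and $v$, hence $w$) and $u\,w\,v\,w_2\,z\,w_1\,u$ is a $6$-cycle, again impossible, so one of $w_1,w_2$ equals $w$ and $z\in N_G(w)$. In every case $z\in N_G(w)$, and since also $u,v\in N_G(w)$ we conclude $C\subseteq N_G(w)$; this settles the ``in particular'' assertion, and a fortiori $C\subseteq N_G[w]$.

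It remains to handle $|C|\le 2$ together with cliques: a singleton $\{u\}$ lies in $N_G[u]$; a clique $\{u,v\}$ lies in $N_G[u]$; and a pair $\{u,v\}$ at distance $2$ lies in $N_G[w]$, where $w$ is their common neighbor. Combined with the previous paragraph, this yields the statement. The only delicate point — the step I expect to require the most care — is verifying in the two closed-walk arguments that the listed vertices really are pairwise distinct, so that one genuinely obtains cycles of length $5$ and $6$; the needed non-coincidences come from $w\notin C$, from $d_G(u,v)=2$ (a common neighbor of $v$ and $z$ equal to $u$ would force $u\sim v$), and from the $4$-cycle-free property (which forbids two distinct common neighbors). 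Once this bookkeeping is done, the girth hypothesis closes every case, and the cycle lengths that arise ($4,5,6$) make clear why the threshold is exactly $g(G)\ge 7$.
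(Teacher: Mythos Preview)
Your proof is correct. The paper organizes the argument differently: for $|C|\ge 3$ it first establishes separately that $C$ is independent (an edge $u_1u_2$ in $C$ together with a third vertex $u_3$ is shown to yield a $5$-cycle), and only then runs a maximality argument, choosing the largest $k$ for which $\{u_1,\dots,u_k\}\subseteq N_G(w)$ for some $w\notin C$ and showing that $k<|C|$ would force a cycle of length at most $6$. Your route is more direct: you fix one distance-$2$ pair and its unique common neighbor $w$ at the outset, then verify each remaining $z\in C$ by a three-way case split on its adjacencies to $u$ and $v$. The explicit use of the \emph{uniqueness} of the common neighbor (no $4$-cycle) and of $w\notin C$ (from the $2$DMV condition applied to the unique length-$2$ geodesic) makes the bookkeeping clean. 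As a side remark, your Case~2 is in fact vacuous---the conclusion $w'=w$ together with $z\sim u$ and $u\sim w$ produces a triangle---so independence of $C$ drops out as a by-product rather than requiring its own step. Both arguments invoke exactly the cycle lengths $4,5,6$, which is why the threshold $g(G)\ge 7$ is the same in each.
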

\begin{proof}
Let $C$ be a color class in a $\chi_{\mu_2}(G)$-coloring. We show that $C\subseteq N_{G}[v]$ for some vertex $v\in G$. This is trivial when $|C|=1$. Moreover, if $|C|=2$ and $u,v\in C$, then $uv\in E(G)$ or there exists a $u,v$-geodesic $uxv$ such that $x\notin C$. Therefore, $C\subseteq N_{G}[u]$ or $C\subseteq N_{G}[x]$, respectively. So, we assume that $|C|\geq3$ and let $C=\{u_{1},\ldots,u_{|C|}\}$. 

Suppose that $C$ is not independent. Without loss of generality, we may assume that $u_{1}u_{2}\in E(G)$. Since $g(G)\geq7$, $u_3$ is not adjacent to at least one of $u_1$ and $u_2$, say $u_1$. So, there is a $u_{3},u_{1}$-geodesic $u_{3}wu_{1}$ in which $w\notin C$. Again, because $g(G)\geq7$, it follows that $u_{3}u_{2}\notin E(G)$. Therefore, there is a $u_{3},u_{2}$-geodesic $u_{3}xu_{2}$ in $G$ such that $x\notin C$. If $x=w$, then we have the triangle $u_{1}wu_{2}$, which is impossible. Therefore, $x\neq w$. This leads to the existence of the cycle $xu_{3}wu_{1}u_{2}x$ of length $5$, a contradiction. Therefore, $C$ is independent.

Assume that $k\in[|C|]$ is the largest integer such that, by renaming the indices if necessary, $\{u_{1},\ldots,u_{k}\}\subseteq N_{G}(w)$ for some vertex $w\notin C$. Clearly, $k\geq2$. Suppose that $k<|C|$. Since $C$ is a $2$DMV set, there is a $u_{k+1},u_{1}$-geodesic $u_{k+1}w'u_{1}$ such that $w'\notin C$. On the other hand, $w\neq w'$ by our choice of $k$. By the same reason, there exists a $u_{k+1},u_{k}$-geodesic $u_{k+1}w''u_{k}$ such that $w''\notin C$. This leads to the cycle $u_{1}wu_{k}w''u_{k+1}w'u_{1}$ (if $w'\neq w''$) or the cycle $u_{1}wu_{k}w'u_{1}$ (if $w'=w''$), each of them of length less than $7$, a contradiction. Hence, $k=|C|$. Thus, $C\subseteq N_{G}(w)$. This completes the proof.
\end{proof}

As an immediate consequence of Lemma~\ref{thm:closedneighborhood}, we infer the previously mentioned bound. 

\begin{thm}\label{Cor}
If $G$ is a graph of girth at least $7$, then $\chi_{\mu_2}(G)\geq \gamma(G)$.
\end{thm}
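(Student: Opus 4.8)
The plan is to derive this as a direct corollary of Lemma \ref{thm:closedneighborhood}, essentially by constructing a dominating set from the color classes of an optimal $2$DMV coloring. First I would fix a $\chi_{\mu_2}(G)$-coloring with color classes $C_1,\ldots,C_{\chi_{\mu_2}(G)}$. By Lemma \ref{thm:closedneighborhood}, for each $i$ there exists a vertex $v_i\in V(G)$ with $C_i\subseteq N_G[v_i]$. The set $D=\{v_1,\ldots,v_{\chi_{\mu_2}(G)}\}$ then has cardinality at most $\chi_{\mu_2}(G)$, and since the color classes partition $V(G)$, every vertex of $G$ lies in $N_G[v_i]$ for some $i$; hence $D$ is a dominating set of $G$. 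This gives $\gamma(G)\le |D|\le \chi_{\mu_2}(G)$, which is the claimed inequality.

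The only point requiring a little care is that the vertices $v_i$ furnished by the lemma need not be distinct, so a priori $|D|<\chi_{\mu_2}(G)$; but this only helps the inequality, so no issue arises. One should also note that the lemma as stated requires $g(G)\ge 7$, which matches the hypothesis of the theorem exactly, so no extra assumptions are needed. There is no real obstacle here — the heavy lifting is entirely in Lemma \ref{thm:closedneighborhood}, and the present statement is just the observation that a cover of $V(G)$ by closed neighborhoods is precisely a dominating set.

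\begin{proof}
Let $f$ be a $\chi_{\mu_2}(G)$-coloring with color classes $C_1,\ldots,C_{\chi_{\mu_2}(G)}$. By Lemma~\ref{thm:closedneighborhood}, for each $i\in[\chi_{\mu_2}(G)]$ there is a vertex $v_i\in V(G)$ with $C_i\subseteq N_G[v_i]$. Set $D=\{v_1,\ldots,v_{\chi_{\mu_2}(G)}\}$, so that $|D|\le \chi_{\mu_2}(G)$. Since $\{C_1,\ldots,C_{\chi_{\mu_2}(G)}\}$ is a partition of $V(G)$, every vertex $x\in V(G)$ belongs to some $C_i$, hence $x\in N_G[v_i]$. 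Therefore $D$ is a dominating set of $G$, and $\gamma(G)\le |D|\le \chi_{\mu_2}(G)$.
\end{proof}
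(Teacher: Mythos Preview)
Your proof is correct and is exactly the argument the paper has in mind: the paper simply states that the theorem is ``an immediate consequence of Lemma~\ref{thm:closedneighborhood}'' without spelling out the details, and your construction of a dominating set from the closed-neighborhood centers of the color classes is precisely how that immediacy is realized.
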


It should be noted that $7$ is the smallest possible integer $k$ for which $g(G)\geq k$ implies $\chi_{\mu_2}(G)\geq \gamma(G)$. Fig.~\ref{fig:girth} gives an example of a graph $G$ (in which the numbers indicate the colors given to the vertices in the $\chi_{\mu_2}(G)$-coloring) with $g(G)=6$ and $\gamma(G)=5>\chi_{\mu_2}(G)=3$. 

\begin{figure}[ht!]
\begin{center}
\begin{tikzpicture}[scale=0.7,style=thick,x=1.5cm,y=1.5cm]
\node[circle, draw] (a) at (-0.25,-0.25) {1};
\node[circle, draw] (b) at (2.25,-0.25) {2};
\node[circle, draw] (c) at (3.45,1.8) {1};
\node[circle, draw] (d) at (2.25,3.85) {2}; 
\node[circle, draw] (e) at (-0.25,3.85) {1};  
\node[circle, draw] (f) at (-1.45,1.8) {2};  
\draw (a) -- (b) -- (c) -- (d) -- (e) -- (f) -- (a);

\node[circle, draw] (a1) at (-1.5,-1.8) {3};
\node[circle, draw] (b1) at (3.5,-1.8) {1};
\node[circle, draw] (c1) at (5.4,1.8) {3};
\node[circle, draw] (d1) at (3.5,5.4) {1}; 
\node[circle, draw] (e1) at (-1.5,5.4) {3};  
\node[circle, draw] (f1) at (-3.6,1.8) {1};  
\draw (a1) -- (b1) -- (c1) -- (d1) -- (e1) -- (f1) -- (a1);

\node[circle, draw] (u) at (1,1.8) {1};
\node[circle, draw] (u1) at (0.5,2.7) {2};
\node[circle, draw] (u2) at (2.1,1.8) {2};
\node[circle, draw] (u3) at (0.5,0.9) {2};
\draw (u) -- (u1) -- (e);
\draw (u) -- (u2) -- (c);
\draw (u) -- (u3) -- (a);
\draw (f1) -- (f);
\draw (b1) -- (b);
\draw (d1) -- (d);
\draw (2.13,2.1) .. controls (-0.1,3.8) and (-1,3.75) .. (-3.34,2);
\draw (u3) .. controls (4.25,1.25) and (3.6,5) .. (d1);
\draw (u1) .. controls (0,1.75) and (-0.75,-1) .. (b1);
\end{tikzpicture}
\caption{The graph $G$ with $g(G)=6$, $\gamma(G)=5$ and $\chi_{\mu_2}(G)=3$.}
\label{fig:girth}
\end{center}
\end{figure}
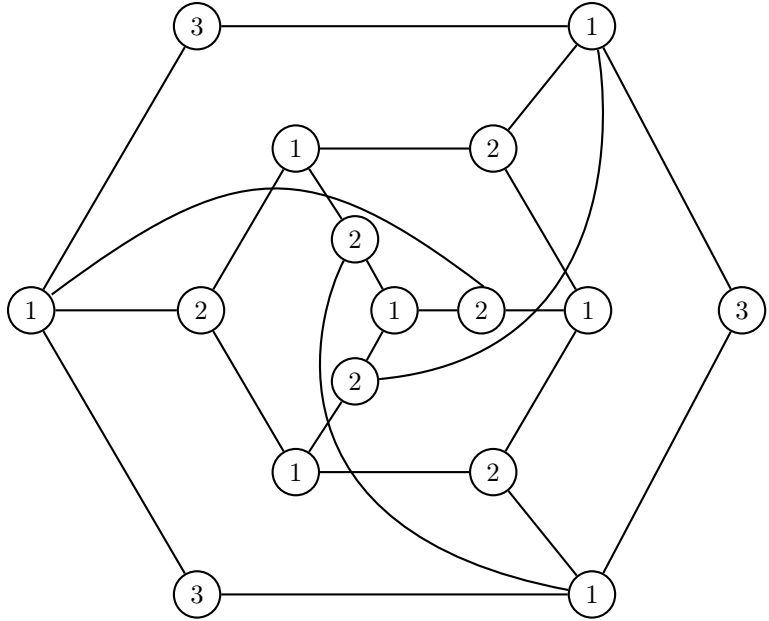




A natural question is whether the difference $\chi_{\mu_2}(G) - \gamma(G)$ can be arbitrarily large in connected graphs with $g(G)\ge 7$. The answer is yes, where the paths serve as examples. For a path $P_n$, $\gamma(P_n) = \lceil \frac{n}{3} \rceil$, whereas $\chi_{\mu_2}(P_n) = \lceil \frac{n}{2} \rceil$.
On the other hand, the difference $\gamma_t(G)-\chi_{\mu_2}(G)$ can also be arbitrarily large in connected graphs with $g(G)\ge 7$. Again it holds even in trees. In particular, for any positive integer $\ell$, there exists a tree $T$ such that $\gamma_{t}(T)-\chi_{\mu_{2}}(T)=\ell$. To see this, consider the double star $S(a,b)$ for integers $a,b\geq2$, with support vertices $x$ and $y$. Let $P_{4}^{i}$ be a copy of the path $P_{4}$ for each $i\in[\ell]$. Now, let $T$ be obtained from $S(a,b)$ by identifying one endvertex of $P_{4}^{i}$ with $x$ for all $i\in[\ell]$. See Fig.~\ref{Fig1}, in which:\\
$\bullet$ $x$ and $y$ are adjacent to $a$ and $b$ leaves, respectively, and\\
$\bullet$ all vertices labeled $i$, for each $i\in[\ell+2]$, form a $2$DMV set in $T$.

It is easy to check that this labeling provides us with a $\chi_{\mu_{2}}(T)$-coloring. On the other hand, it is clear that $\gamma_{t}(T)=2\ell+2$. Therefore, $\gamma_{t}(T)-\chi_{\mu_{2}}(T)=\ell$.

\begin{ob}
For any positive integer $\ell$, there exists a tree $T$ such that $\gamma_{t}(T)-\chi_{\mu_{2}}(T)=\ell$. 
\end{ob}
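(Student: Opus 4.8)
The plan is to exhibit an explicit tree $T$ built from a double star together with $\ell$ pendant copies of $P_4$, and to verify two things independently: that $\gamma_t(T)=2\ell+2$, and that $\chi_{\mu_2}(T)=\ell+2$. The construction is as described just before the statement: take the double star $S(a,b)$ with support vertices $x,y$ (with $a,b\ge 2$ leaves respectively), take copies $P_4^1,\dots,P_4^\ell$, and identify one endvertex of each $P_4^i$ with $x$. Write $P_4^i = x\,p_i\,q_i\,r_i$, so that $p_i,q_i,r_i$ are the ``new'' vertices contributed by the $i$-th path.

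First I would compute $\gamma_t(T)$. For the total domination lower bound, note that each leaf of $S(a,b)$ forces its support vertex into any total dominating set, so $x,y\in D$; moreover $y$ needs a neighbour in $D$, which (since its only non-leaf neighbour is $x$ already present — wait, $x$ and $y$ are adjacent in $S(a,b)$, so $x$ totally dominates $y$) is automatic. For each path $P_4^i$, the endvertex $r_i$ is a leaf, forcing $q_i\in D$, and $q_i$ in turn needs a neighbour in $D$, which must be $p_i$ (since $r_i$ is a leaf it would not help other vertices, but either $p_i$ or $r_i$ works for $q_i$; taking $p_i$ is at worst equal). Counting: $\{x,y\}$ contributes $2$, and $\{p_i,q_i\}$ for each $i$ contributes $2$, giving $2\ell+2$; and one checks this set is indeed a total dominating set (each $p_i$ is dominated by $q_i$, each $q_i$ by $p_i$, $r_i$ by $q_i$, $x$ by $y$, $y$ by $x$, leaves of $x$ by $x$, leaves of $y$ by $y$). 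A careful lower-bound argument — that no set of size $2\ell+1$ total-dominates — follows because the $\ell$ pendant paths are vertex-disjoint and each independently requires at least $2$ vertices of $D$ among $\{p_i,q_i,r_i\}$ (to totally dominate $r_i$ one needs $q_i$, and to totally dominate $q_i$ one needs $p_i$ or $r_i$), while $x,y$ together require at least $2$ more vertices to totally dominate the pendant leaves of the double star. Hence $\gamma_t(T)=2\ell+2$.

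Next I would handle $\chi_{\mu_2}(T)$. Since $g(T)=\infty\ge 7$, Theorem~\ref{Cor} gives $\chi_{\mu_2}(T)\ge\gamma(T)$, and a short computation shows $\gamma(T)=\ell+2$ (dominate the leaves of $x$ and $y$ by $x$ and $y$; dominate each $P_4^i$ by $q_i$, which also covers $r_i$, leaving $p_i$ dominated by $x$; this gives $\{x,y\}\cup\{q_i:i\in[\ell]\}$, and no smaller dominating set exists since the $\ell$ pendant paths plus the two ``private leaf'' demands at $x,y$ are disjoint). For the upper bound $\chi_{\mu_2}(T)\le\ell+2$ I would present the explicit $2$DMV coloring indicated in Figure~\ref{Fig1}: colour all leaves of $x$ together with $x$'s path-neighbours and appropriate path vertices so that each colour class lies inside a single closed (indeed open) neighbourhood, which makes it automatically a $2$DMV set by the argument in the proof of Theorem~\ref{thm:gammatotal}. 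Concretely: give all $a$ leaves at $x$ and the vertex $p_i$... — the cleanest description is to say colour class $j$ for $j\in[\ell]$ is $\{q_j\}$ together with ... ; the point is that one designs $\ell+2$ classes, two of which are $N(x)$-type and $N(y)$-type pieces and $\ell$ of which capture the middle of the paths, each class contained in some $N_T[z]$ so the $2$DMV condition holds trivially. Combining, $\chi_{\mu_2}(T)=\ell+2$, whence $\gamma_t(T)-\chi_{\mu_2}(T)=(2\ell+2)-(\ell+2)=\ell$.

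The main obstacle is not any deep idea but bookkeeping: one must make sure the explicit $(\ell+2)$-coloring genuinely partitions $V(T)$ and that each class sits inside one (closed) neighbourhood, and separately that the matching lower bound $\chi_{\mu_2}(T)\ge\ell+2$ is airtight — the latter is where Theorem~\ref{Cor} does the work, provided $\gamma(T)=\ell+2$ is verified correctly. I would double-check the small cases $\ell=1$ against the figure to make sure the $P_4$ (rather than $P_3$ or $P_5$) is the right gadget: $P_4$ is exactly long enough that $\gamma_t$ of the pendant piece ``costs'' $2$ while $\gamma$ (hence $\chi_{\mu_2}$ of that piece) ``costs'' only $1$, which is precisely the per-copy gap of $1$ that accumulates to $\ell$.
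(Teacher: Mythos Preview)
Your proposal is correct and follows essentially the same approach as the paper: the same tree $T$ (double star $S(a,b)$ with $\ell$ pendant copies of $P_4$ at $x$) is constructed, and the same values $\gamma_t(T)=2\ell+2$ and $\chi_{\mu_2}(T)=\ell+2$ are established. Your route to the lower bound $\chi_{\mu_2}(T)\ge\ell+2$ via Theorem~\ref{Cor} and $\gamma(T)=\ell+2$ is a minor (and perfectly valid) variation on the paper's treatment, which simply asserts optimality of the coloring in Figure~\ref{Fig1}; an equally quick direct argument is that $r_1,\dots,r_\ell$, a leaf of $x$, and a leaf of $y$ are $\ell+2$ vertices pairwise at distance greater than $2$.
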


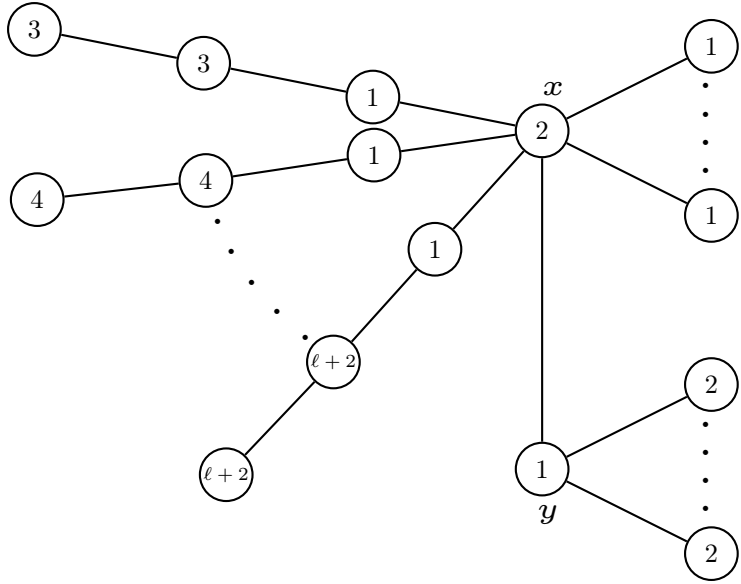
\begin{figure}[ht!]
\centering
\begin{tikzpicture}[scale=0.5, style=thick, x=1.5cm, y=1.5cm ]
  \node[circle, draw, minimum size=0.7cm] (y) at (0,0) {1};
  \node[circle, draw, minimum size=0.7cm] (x) at (0,6) {2};
  \node[circle, draw, minimum size=0.7cm] (y1) at (3,-1.5) {2};
  \node[circle, draw, minimum size=0.7cm] (y2) at (3,1.5) {2};
  \node[circle, draw, minimum size=0.7cm] (x1) at (3,4.5) {1};
  \node[circle, draw, minimum size=0.7cm] (x2) at (3,7.5) {1};
  \node[circle, draw, minimum size=0.7cm] (p11) at (-3,6.6) {1};
  \node[circle, draw, minimum size=0.7cm] (p12) at (-6,7.2) {3};
  \node[circle, draw, minimum size=0.7cm] (p13) at (-9,7.8) {3};
  \node[circle, draw, minimum size=0.7cm] (p21) at (-2.98,5.57) {1};
  \node[circle, draw, minimum size=0.7cm] (p22) at (-5.96,5.12) {4};
  \node[circle, draw, minimum size=0.7cm] (p23) at (-8.95,4.78) {4};
  \node[circle, draw, minimum size=0.7cm] (p31) at (-1.9,3.9) {1};
  \node[circle, draw, minimum size=0.7cm, inner sep=0pt, font=\scriptsize] (p32) at (-3.7,1.9) {$\ell+2$};
  \node[circle, draw, minimum size=0.7cm, inner sep=0pt, font=\scriptsize] (p33) at (-5.6,-0.1) {$\ell+2$};

  \draw (x) -- (y);
    \draw (y1) -- (y) -- (y2);
      \draw (x1) -- (x) -- (x2);
        \draw (x) -- (p11) -- (p12) -- (p13);
           \draw (x) -- (p21) -- (p22) -- (p23);
            \draw (x) -- (p31) -- (p32) -- (p33);
   \node[scale=3.25]  at (-5.75,4.4) {\tiny $.$};   
  \node[scale=3.25]  at (-5.51,3.87) {\tiny $.$};   
\node[scale=3.25]  at (-5.15,3.32) {\tiny $.$};  
\node[scale=3.25]  at (-4.7,2.79) {\tiny $.$};   
\node[scale=3.25]  at (-4.2,2.34) {\tiny $.$};   
\node[scale=3.25]  at (2.9,6.8) {\tiny $.$};   
\node[scale=3.25]  at (2.9,6.3) {\tiny $.$};   
\node[scale=3.25]  at (2.9,5.8) {\tiny $.$};   
\node[scale=3.25]  at (2.9,5.3) {\tiny $.$};   
\node[scale=3.25]  at (2.9,0.8) {\tiny $.$};   
\node[scale=3.25]  at (2.9,0.3) {\tiny $.$};   
\node[scale=3.25]  at (2.9,-0.2) {\tiny $.$};   
\node[scale=3.25]  at (2.9,-0.7) {\tiny $.$};   

  \node[scale=2.25] at (0.2,6.75) {\tiny $x$};
   \node[scale=2.25] at (0.1,-0.80) {\tiny $y$};
  
\end{tikzpicture}

\caption{{\small A tree $T$ with $\gamma_{t}(T)=2\ell+2$ and $\chi_{\mu_{2}}(T)=\ell+2$. Here, $x$ (resp. $y$) is adjacent to $a$ (resp. $b$) leaves and it is an endvertex of $\ell$ copies of the path $P_{4}$.}}\label{Fig1}
\end{figure}



At the end of this section, we turn our attention to relations with exact distance graphs. 
Given a graph $G$, the \textit{exact distance-$p$ graph} $G^{[\natural p]}$ has $V(G)$ as its vertex set, and two vertices are adjacent whenever the distance between them in $G$ equals $p$. Note that the exact distance graphs have been extensively studied in the literature with the main focus on their chromatic number. They were introduced and studied already in the 1980's~\cite{Sim83}, and were then rediscovered a decade ago~\cite[Section 11.9]{NESE}, which initiated several further studies; see some recent papers~\cite{BOUS, BGKT, HEUV,Q}. 

In the following result, the $1$DMV number appears, in the form of the clique cover number. In the proof, we will use the concept of independent $2$-distance mutual-visibility defined as follows. A set $D$ is an {\em independent $2$-distance mutual-visibility set}, or an I$2$DMV {\em set} for short, if $D$ is independent and $2$DMV set at the same time. The cardinality of a minimum I$2$DMV set in $G$ is denoted by $\chi_{i\mu_2}(G)$. 

\begin{thm}\label{thm:dis}
If $G$ is a graph with $g(G)\geq7$ and no isolated vertices, then $\theta(G^{[\natural2]})= \chi_{i\mu_2}(G) = \gamma_{t}(G)$.
\end{thm}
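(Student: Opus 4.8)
The plan is to prove the two equalities $\theta(G^{[\natural2]})=\chi_{i\mu_2}(G)$ and $\chi_{i\mu_2}(G)=\gamma_t(G)$ separately, using the structural information from Lemma~\ref{thm:closedneighborhood} as the backbone. First I would establish that, under the girth hypothesis, a set $S\subseteq V(G)$ with $|S|\ge 2$ is a clique in $G^{[\natural2]}$ if and only if it is an independent set in $G$ all of whose vertices lie in the open neighborhood $N_G(w)$ of a single vertex $w$. The ``if'' direction is immediate: if $x,y\in N_G(w)$ are nonadjacent then $d_G(x,y)=2$ via $w$, and $g(G)\ge 7$ (indeed $\ge 5$) forbids a second common neighbor, so the $x,y$-path through $w$ is a geodesic with interior vertex $w\notin S$; hence $S$ is both a clique in $G^{[\natural2]}$ and a $2$DMV set. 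For the ``only if'' direction, a clique $S$ in $G^{[\natural2]}$ is independent in $G$ and pairwise at distance exactly $2$; running essentially the argument in the proof of Lemma~\ref{thm:closedneighborhood} (the maximal-common-neighbor argument) shows all of $S$ sits in one $N_G(w)$, with $w\notin S$ since $S$ is at pairwise distance $2$ (so no vertex of $S$ is adjacent to all others). This correspondence between cliques of $G^{[\natural2]}$ of size $\ge 2$ and I$2$DMV sets of size $\ge2$ contained in a neighborhood, together with the fact that singletons trivially play both roles, yields $\theta(G^{[\natural2]})=\chi_{i\mu_2}(G)$: any clique cover of $G^{[\natural2]}$ is a partition into I$2$DMV sets, and conversely every I$2$DMV set is, by Lemma~\ref{thm:closedneighborhood} applied in its independent form, contained in some $N_G(w)$ and hence a clique of $G^{[\natural2]}$.

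Next I would prove $\chi_{i\mu_2}(G)=\gamma_t(G)$. For the inequality $\chi_{i\mu_2}(G)\le\gamma_t(G)$ I would refine the construction in the proof of Theorem~\ref{thm:gammatotal}: starting from a $\gamma_t(G)$-set $D=\{v_1,\dots,v_{\gamma_t}\}$, define $D_1=N_G(v_1)$ and $D_i=N_G(v_i)\setminus\bigcup_{j<i}N_G(v_j)$; each $D_i$ is a $2$DMV set contained in $N_G(v_i)$, and it only remains to observe that $D_i$ is independent, which follows because a non-edge-free subset of $N_G(v_i)$ would create a triangle with $v_i$, contradicting $g(G)\ge 7$. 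So the $D_i$ form a partition into I$2$DMV sets, giving $\chi_{i\mu_2}(G)\le\gamma_t(G)$. For the reverse inequality $\gamma_t(G)\le\chi_{i\mu_2}(G)$, take a minimum partition $\{C_1,\dots,C_t\}$ of $V(G)$ into I$2$DMV sets; by the neighborhood form of Lemma~\ref{thm:closedneighborhood}, each $C_i$ lies in $N_G(w_i)$ for some vertex $w_i$ (when $|C_i|=1$ we may pick $w_i$ to be any neighbor of the unique vertex, using that $G$ has no isolated vertices). Then $\{w_1,\dots,w_t\}$ totally dominates $G$: every vertex $u$ lies in some $C_i\subseteq N_G(w_i)$, so $u$ has a neighbor among the $w_i$'s. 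Hence $\gamma_t(G)\le t=\chi_{i\mu_2}(G)$.

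Combining the two equalities gives $\theta(G^{[\natural2]})=\chi_{i\mu_2}(G)=\gamma_t(G)$, which is the claim; note that the chain $\chi_{\mu_2}(G)\le\chi_{i\mu_2}(G)=\gamma_t(G)$ recovers Theorem~\ref{thm:gammatotal} and the chain $\gamma(G)\le\gamma_t(G)$ together with Theorem~\ref{Cor} is consistent, since $\chi_{\mu_2}(G)\ge\gamma(G)$ while $\chi_{\mu_2}(G)$ may be strictly below $\chi_{i\mu_2}(G)$ (color classes need not be independent). The main obstacle I anticipate is the careful handling of small color classes: singletons and two-element classes must be slotted correctly into both directions (a two-element independent class $\{x,y\}$ with $xy\notin E(G)$ sits in $N_G(x')$ for the geodesic midpoint $x'$, while a singleton must be anchored to an arbitrary neighbor), and one must verify the girth bound is invoked exactly where needed — $g(G)\ge5$ already rules out two common neighbors, but the full $g(G)\ge 7$ is what the maximal-common-neighbor argument of Lemma~\ref{thm:closedneighborhood} requires to force a single center $w$. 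Beyond that bookkeeping, each step is a short argument, so I expect the proof to be compact.
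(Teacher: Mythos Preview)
Your proposal is correct and follows essentially the same approach as the paper: the refinement of the $D_i$ construction to show $\chi_{i\mu_2}(G)\le\gamma_t(G)$, and the selection of neighborhood centers $w_i$ from a minimum I$2$DMV partition to show $\gamma_t(G)\le\chi_{i\mu_2}(G)$, are exactly what the paper does. The one noteworthy difference is in how you handle $\theta(G^{[\natural2]})=\chi_{i\mu_2}(G)$: you route it through the neighborhood structure and invoke the girth hypothesis, whereas the paper simply observes that, \emph{for any graph}, a set $A$ is an I$2$DMV set if and only if it is a clique in $G^{[\natural2]}$ (independence forces distance $\ge 2$, the $2$DMV condition forces distance $\le 2$, and the midpoint of any length-$2$ geodesic cannot lie in $A$ since it would then be at distance $1$ from another member of $A$). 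Your argument is not wrong, just more elaborate than necessary; the girth assumption is only truly needed for the $\chi_{i\mu_2}(G)=\gamma_t(G)$ part.
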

\begin{proof}
Let $D,D_{1},\ldots,D_{\gamma_{t}(G)}$ be the same as those in the proof of Theorem \ref{thm:gammatotal}. We claim that $D_{i}$ is an I$2$DMV set in $G$ for every $i\in[\gamma_{t}(G)]$. As it was proved in the proof of Theorem \ref{thm:gammatotal}, $D_{i}$ is a $2$DMV set in $G$ for each $i\in[\gamma_{t}(G)]$. On the other hand, for any distinct vertices $u,v\in D_{i}$, we observe that $uv\notin E(G)$ because $uv_{i},vv_{i}\in E(G)$ and $g(G)\geq7$. Thus, each $D_{i}$ is an I$2$DMV set in $G$, and so $\{D_{i},\ldots,D_{\gamma_{t}(G)}\}$ is a vertex partition of $G$ into I$2$DMV sets. Therefore,
\begin{equation}\label{Total} 
\chi_{i\mu_2}(G)\leq \gamma_{t}(G).
\end{equation}

Note that every color class in a $\chi_{i\mu_2}(G)$-coloring is a subset of the open neighborhood of some vertex in $G$. To see this, one can use the same arguments as in the proof of Lemma~\ref{thm:closedneighborhood} noting that an I$2$DMV set is also an $2$DMV set while at the same time two vertices in the same color class cannot be adjacent. Thus, letting $\mathcal{Q}$ be a $\chi_{i\mu_2}(G)$-coloring, every set $Q\in \mathcal{Q}$ is a subset of $N_{G}(v)$ for some $v\in V(G)$. For every $Q\in \mathcal{Q}$, we choose only one such vertex $v$, and let $S$ be the set of such vertices $v$. Then, it is clear that $|S|\leq|\mathcal{Q}|$. Moreover, $S$ is a total dominating set in $G$ by its definition and since $\mathcal{Q}$ is   a vertex partition of $G$. Therefore, $\gamma_{t}(G)\leq|S|\leq|\mathcal{Q}|=\chi_{i\mu_2}(G)$. This leads to the equality 
\begin{equation}\label{Total2} 
\chi_{i\mu_2}(G)=\gamma_t(G)
\end{equation}
due to the inequality (\ref{Total}).

We now turn our attention to the exact distance-$2$ graph $G^{[\natural2]}$. It is readily checked, by definitions, that a vertex subset $A$ in $G$ is an I$2$DMV set if and only if it is a clique in the graph $G^{[\natural2]}$. This in particular implies that $\chi_{i\mu_2}(G)=\theta(G^{[\natural2]})$. Consequently, we have the desired equality $\theta(G^{[\natural2]})=\gamma_{t}(G)$ in view of (\ref{Total2}). 
\end{proof}

Note that Theorem~\ref{thm:dis} is best possible in the sense that it does not hold for graphs $G$ with $g(G)=6$ with no isolated vertices. To see this, let $G$ be obtained from the cycle $C_{6}:v_{1}v_{2}v_{3}v_{4}v_{5}v_{6}v_{1}$ by joining new vertices $u_{1},u_{3},u_{5}$ to $v_{1},v_{3},v_{5}$, respectively. It is then easy to see that $\gamma_{t}(G)=5$ and $\theta(G^{[\natural2]})=4$.

It is worth mentioning, due to the proof of Theorem~\ref{thm:dis}, that the inequality $\theta(G^{[\natural2]})\leq \gamma_{t}(G)$ holds for all triangle-free graphs $G$ with no isolated vertices. However, the triangle-free property cannot be omitted here. To see this, it suffices to consider the complete graph $K_{n}$, for $n\geq3$, with $\gamma_{t}(K_{n})=2$ and $\theta(K_{n}^{[\natural2]})=n$.

\section{Lexicographic product of graphs}
\label{sec:lex}

In our first upper bound on the $2$DMV chromatic number of the lexicographic product of graphs, the concept of exact 2-distance graph is again instrumental. 

\begin{thm}\label{Lexic}
If $G$ is a connected graph on at least two vertices and $H$ is any graph, then 
\begin{center}
 $\chi_{\mu_{2}}(G\circ H)\leq \theta(G^{[\natural2]})\leq \theta\big{(}(G\circ H)^{[\natural2]}\big{)}$.
\end{center}
These inequalities are sharp. 
\end{thm}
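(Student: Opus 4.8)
The plan is to establish the two inequalities separately, then argue sharpness. For the left inequality $\chi_{\mu_2}(G\circ H)\le\theta(G^{[\natural2]})$, the idea is to lift a clique cover of $G^{[\natural2]}$ to a $2$DMV coloring of $G\circ H$. Take a partition $\{A_1,\dots,A_t\}$ of $V(G)$ into cliques of $G^{[\natural2]}$, where $t=\theta(G^{[\natural2]})$; equivalently each $A_j$ is a set of vertices of $G$ pairwise at distance exactly $2$. Define the color class $C_j=A_j\times V(H)$, so $\{C_1,\dots,C_t\}$ is a partition of $V(G\circ H)$ with $t$ classes. The crux is to verify each $C_j$ is a $2$DMV set in $G\circ H$. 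Recall the distance formula in the lexicographic product: for $g\ne g'$, $d_{G\circ H}((g,h),(g',h'))=d_G(g,g')$, and for $g=g'$, the distance is $\min\{d_H(h,h'),2\}$ (using that $G$ has at least two vertices, hence $g$ has a neighbor in $G$). Given two vertices $(g,h),(g',h')\in C_j$: if $g=g'$, they are at distance $1$ or $2$ in $G\circ H$, and when at distance $2$ a geodesic can be routed through $(g'',h)$ for some neighbor $g''$ of $g$ in $G$ — whose first coordinate $g''$ lies outside $A_j$ because $d_G(g,g'')=1\ne2$, so $(g'',h)\notin C_j$. If $g\ne g'$, then $d_G(g,g')=2$ since $A_j$ is a clique of $G^{[\natural2]}$, so $d_{G\circ H}((g,h),(g',h'))=2$; picking a common $G$-neighbor $g^\star$ of $g$ and $g'$ (exists since $d_G(g,g')=2$), the path $(g,h)\,(g^\star,h)\,(g',h')$ is a geodesic whose internal vertex has first coordinate $g^\star\notin A_j$ (again $d_G(g,g^\star)=1$), hence is not in $C_j$. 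This shows $C_j$ is a $2$DMV set and gives the first inequality.

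For the right inequality $\theta(G^{[\natural2]})\le\theta\big((G\circ H)^{[\natural2]}\big)$, the cleanest route is to exhibit $G^{[\natural2]}$ as an induced subgraph of $(G\circ H)^{[\natural2]}$, because $\theta$ is monotone under taking induced subgraphs (a clique cover of the big graph restricts to a clique cover of the induced subgraph). Fix any $h_0\in V(H)$ and consider the set $G\times\{h_0\}$. For $g\ne g'$, $d_{G\circ H}((g,h_0),(g',h_0))=d_G(g,g')$, so $(g,h_0)$ and $(g',h_0)$ are adjacent in $(G\circ H)^{[\natural2]}$ if and only if $d_G(g,g')=2$, i.e. if and only if $gg'\in E(G^{[\natural2]})$. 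Thus $g\mapsto(g,h_0)$ is an isomorphism from $G^{[\natural2]}$ onto the subgraph of $(G\circ H)^{[\natural2]}$ induced by $G\times\{h_0\}$, and monotonicity of $\theta$ yields the claim.

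For sharpness, I would look for instances making the chain tight. When $H=K_1$ we have $G\circ H\cong G$ and both inequalities become equalities trivially, but a more substantial example is preferable; a natural candidate is to take $G$ with $g(G)\ge 7$ and no isolated vertices, where Theorem~\ref{thm:dis} gives $\theta(G^{[\natural2]})=\gamma_t(G)$, and combine with Theorem~\ref{thm:gammatotal}-type bounds on $G\circ H$ to force $\chi_{\mu_2}(G\circ H)=\theta(G^{[\natural2]})$; for the right inequality, an appropriate $H$ (for instance $H$ complete, collapsing in-fiber distances) should make $(G\circ H)^{[\natural2]}$ have no larger clique cover number than $G^{[\natural2]}$, e.g.\ small cases such as $G$ a path and $H=K_m$ can be checked directly. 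The main obstacle I anticipate is the careful case analysis in the first inequality — in particular handling the $g=g'$ case and making sure the rerouted geodesic genuinely has length equal to the distance and avoids $C_j$ — together with pinning down clean concrete families that witness sharpness of both inequalities simultaneously rather than one at a time.
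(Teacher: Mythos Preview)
Your argument for the first inequality is essentially the paper's: both lift a partition of $V(G)$ into cliques of $G^{[\natural2]}$ (equivalently, I$2$DMV sets of $G$) to the product by taking $A_j\times V(H)$, and verify $2$DMV via the same two cases. Your observation that $g''\notin A_j$ because $d_G(g,g'')=1$ while membership in $A_j$ would force distance $2$ is exactly the independence of $A_j$ used in the paper.

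For the second inequality your route is genuinely different and cleaner. The paper takes an optimal I$2$DMV partition of $G\circ H$, projects each class onto $G$, disjointifies, and then argues each projected set is an I$2$DMV set in $G$; this needs a short case analysis ruling out edges inside the projection and producing a good $g,g'$-geodesic from a good $(g,h),(g',h')$-geodesic. Your induced-subgraph argument bypasses all of this: since $d_{G\circ H}\big((g,h_0),(g',h_0)\big)=d_G(g,g')$ for $g\ne g'$, the fiber $V(G)\times\{h_0\}$ induces a copy of $G^{[\natural2]}$ inside $(G\circ H)^{[\natural2]}$, and $\theta$ is monotone under induced subgraphs. This is shorter and more transparent; the only thing you lose is the explicit description of how colorings transfer, which the paper's projection gives but is not needed for the inequality itself.

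On sharpness you have a slip and are too vague. Taking $H=K_1$ does \emph{not} make the first inequality an equality in general: with $G=K_2$ one gets $\chi_{\mu_2}(K_2)=1$ while $\theta(K_2^{[\natural2]})=\theta(\overline{K_2})=2$. The paper's concrete witnesses are $G\cong K_{1,r}$ with $H\cong P_t$ for the left inequality (both sides equal $2$), and $G\cong K_{1,r}$ with $H\cong\overline{K_t}$ for the right inequality (again both sides equal $2$). These are easy to verify directly and would replace your speculative discussion.
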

\begin{proof}
Let $\mathcal{Q}=\{Q_{1},\ldots,Q_{|\mathcal{Q}|}\}$ be a $\chi_{i\mu_{2}}(G)$-coloring. We set $\mathcal{Q}'=\{Q_{1}\times V(H),\ldots,Q_{|\mathcal{Q}|}\times V(H)\}$. Clearly, $\mathcal{Q}'$ is a vertex partition of $G\circ H$. Let $Q_{i}\times V(H)$ be any set in $\mathcal{Q}'$ and $(g,h),(g',h')\in Q_{i}\times V(H)$ be distinct vertices. 

Assume first that $g\neq g'$. We observe that $(g,h)(g',h')\notin E(G\circ H)$ as $Q_{i}$ is independent. In such a situation, since $Q_{i}$ is an I$2$DMV set in $G$, it follows that there exists a $g,g'$-geodesic $gxg'$ in $G$ such that $x\notin Q_{i}$. This results in the existence of the $(g,h),(g',h')$-geodesic $(g,h)(x,h)(g',h')$ in $G\circ H$ whose internal vertex does not belong to $Q_{i}\times V(H)$. Now, let $g=g'$. If $hh' \in E(H)$, then $(g,h)(g',h') \in E(G \circ H)$. Therefore, consider $hh'\notin E(H)$. Since $G$ has no isolated vertices and because $Q_{i}$ is an independent set in $G$, there is a vertex $g''\notin Q_{i}$ such that $gg''\in E(G)$. This leads, by the adjacency rule of $G\circ H$, to the $(g,h)(g',h')$-geodesic $(g,h)(g'',h)(g',h')$ in $G\circ H$ such that $(g'',h)\notin Q_{i}\times V(H)$. Summing up, we have proved that $Q_{i}\times V(H)$ is a $2$DMV set in $G\circ H$ for each $i\in[|\mathcal{Q}|]$. Therefore, $\mathcal{Q}'$ is a partition of $V(G\circ H)$ into $2$DMV sets, and hence 
\begin{center}
$\chi_{\mu_{2}}(G\circ H)\leq|\mathcal{Q}'|=|\mathcal{Q}|=\chi_{i\mu_{2}}(G)=\theta(G^{[\natural2]})$.
\end{center}

To prove the second inequality, we let $\mathcal{A}=\{A_{1},\ldots,A_{|\mathcal{A}|}\}$ be $\chi_{i\mu_{2}}(G\circ H)$-coloring. Consider the sets $\pi_{G}(A_{1}),\ldots,\pi_{G}(A_{|\mathcal{A}|})$, in which $\pi_{G}$ is the projection map onto $G$ \big{(}that is, $\pi_{G}:V(G\circ H)\rightarrow V(G)$ defined by $\pi_{G}\big{(}(g,h)\big{)}=g$ for each $(g,h)\in V(G\circ H)$\big{)}. It is clear that $V(G)=\bigcup_{i=1}^{|\mathcal{A}|}\pi_{G}(A_{i})$ as $V(G\circ H)=\bigcup_{i=1}^{|\mathcal{A}|}A_{i}$.
We now set $B_{1}=\pi_{G}(A_{1})$ and $B_{i}=\pi_{G}(A_{i})\setminus \bigcup_{j=1}^{i-1}\pi_{G}(A_{j})$ for each $i\in[|\mathcal{A}|]\setminus \{1\}$. It is then clear that the sets $B_{i}$ are pairwise disjoint. Moreover, $V(G)=\bigcup_{i=1}^{|\mathcal{A}|}B_{i}$.  

For any $i\in[|\mathcal{A}|]$, let $g,g'\in B_{i}$ be distinct vertices. Then, there exist $h,h'\in V(H)$ such that $(g,h),(g',h')\in A_{i}$. If $gg'\in E(G)$, then $(g,h)(g',h')\in E(G\circ H)$ by the adjacency rule of $G\circ H$. This is impossible as $A_{i}$ is an I$2$DMV set in $G\circ H$. Therefore, $B_{i}$ is an independent set in $G$. 

On the other hand, since $A_{i}$ is an I$2$DMV set in $G\circ H$, it follows that there is a $(g,h),(g',h')$-geodesic $(g,h)(x,y)(g',h')$ in $G\circ H$ such that $(x,y)\notin A_{i}$. Moreover, we necessarily have $g\neq x\neq g'$. 
Therefore, $gxg'$ is a path in $G$. Suppose to the contrary that $x\in B_{i}$. In such a situation, there exists a vertex $z\notin V(H)\setminus \{y\}$ such that $(x,z)\in A_{i}$. Since $gx\in E(G)$, this in particular implies that $(g,h)(x,z)\in E(G\circ H)$. This is a contradiction as $A_{i}$ is an independent set in $G\circ H$. Therefore, $x\notin B_{i}$. We now deduce from the above argument that $gxg'$ is a $g,g'$-geodesic in $G$ whose internal vertex does not belong to $B_{i}$. In fact, we have proved that $B_{i}$ is an I$2$DMV set in $G$. This shows that $\mathcal{B}=\{B_{1},\ldots,B_{|\mathcal{A}|}\}\setminus \{\emptyset\}$ is a vertex partition of $G$ into I$2$DMV sets. Thus, 
\begin{center}
$\theta(G^{[\natural2]})=\chi_{i\mu_{2}}(G)\leq|\mathcal{B}|\leq|\mathcal{A}|=\chi_{i\mu_{2}}(G\circ H)=\theta\big{(}(G\circ H)^{[\natural2]}\big{)}$.
\end{center}

To see that the first inequality is sharp, let $G\cong K_{1,r}$ for some integer $r\geq2$, and $H\cong P_{t}$ for some integer $t\geq2$. It is then clear that $\theta(K_{1,r}^{[\natural2]})=\chi_{i\mu_{2}}(K_{1,r})=2$. Thus, by the proved inequality $\chi_{\mu_{2}}(K_{1,r}\circ P_{t})\leq \theta(K_{1,r}^{[\natural2]})$, we get $\chi_{\mu_{2}}(K_{1,r}\circ P_{t})\le 2$. On the other hand, since $K_{1,r}\circ P_{t}$ is not a complete graph, we infer $\chi_{\mu_{2}}(K_{1,r}\circ P_{t})=2$, as desired.

That the second inequality is sharp, may be seen as follows. Let $G\cong K_{1,r}$ and $H\cong \overline{K_{t}}$ for any positive integers $r$ and $t\ge 2$. (Note that the graph $K_{1,r}\circ \overline{K_{t}}$ is still connected.) It is readily checked that $\{v\}\times V(\overline{K_{t}})$ and $V(K_{1,r}\circ \overline{K_{t}})\setminus(\{v\}\times V\big{(}\overline{K_{t}})\big{)}$ form a vertex partition of $K_{1,r}\circ \overline{K_{t}}$ into I$2$DMV sets, where $v$ is the center of $K_{1,r}$. Hence, $\theta\big{(}(K_{1,r}\circ \overline{K_{t}})^{[\natural2]}\big{)}=\chi_{i\mu_{2}}(K_{1,r}\circ \overline{K_{t}})\leq2$. On the other hand, $\chi_{i\mu_{2}}(K_{1,r}\circ \overline{K_{t}})\geq2$ since the graph $K_{1,r}\circ \overline{K_{t}}$ has more than one vertex. This completes the proof.
\end{proof}

The first inequality in Theorem \ref{Lexic} can be improved if we assume that $\delta(G)\geq2$ and $g(G)\geq5$. 

\begin{thm}\label{con}
If $G$ is a graph with $\delta(G)\geq2$ and $g(G)\geq5$, and $H$ is any graph, then $$\chi_{\mu_{2}}(G\circ H)\leq \chi_{\mu_{2}}(G).$$ This bound is sharp.
\end{thm}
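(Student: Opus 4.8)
The plan is to lift a $\chi_{\mu_{2}}(G)$-coloring of $G$ to $G\circ H$ along the projection onto $G$. Concretely, fix a $\chi_{\mu_{2}}(G)$-coloring $f\colon V(G)\to[\ell]$ with $\ell=\chi_{\mu_{2}}(G)$, write $S_{i}=f^{-1}(i)$, and color each vertex $(g,h)\in V(G\circ H)$ by $f(g)$; the resulting color classes are $S_{1}\times V(H),\ldots,S_{\ell}\times V(H)$. These obviously partition $V(G\circ H)$, so the whole statement reduces to proving that each $S_{i}\times V(H)$ is a $2$DMV set in $G\circ H$.

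The key preliminary step, and the only place where the hypotheses $\delta(G)\geq2$ and $g(G)\geq5$ enter, is the following structural fact: every vertex $g$ belonging to a $2$DMV set $S$ of $G$ has a neighbor outside $S$. Suppose not, so $N_{G}(g)\subseteq S$; since $\delta(G)\geq2$ we may pick distinct $a,b\in N_{G}(g)\subseteq S$. If $ab\in E(G)$ then $\{g,a,b\}$ induces a triangle, contradicting $g(G)\geq5$, so $d_{G}(a,b)=2$; and if $a$ and $b$ had a common neighbor other than $g$, they would lie on a $4$-cycle, again contradicting $g(G)\geq5$. Hence $a\,g\,b$ is the unique $a,b$-geodesic in $G$, and since $g\in S$ this contradicts $S$ being a $2$DMV set (the pair $a,b\in S$ admits no geodesic whose internal vertices avoid $S$). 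This establishes the fact.

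With this in hand, I would verify the $2$DMV property of $S_{i}\times V(H)$ directly for an arbitrary pair of distinct vertices $(g,h),(g',h')\in S_{i}\times V(H)$, using the adjacency rule of the lexicographic product. If $g\neq g'$, then $g,g'\in S_{i}$ and, since $S_{i}$ is a $2$DMV set in $G$, either $gg'\in E(G)$, in which case $(g,h)(g',h')\in E(G\circ H)$, or $d_{G}(g,g')=2$ with a $g,g'$-geodesic $g\,x\,g'$ satisfying $x\notin S_{i}$; then $(g,h)\,(x,h)\,(g',h')$ is a geodesic of length $2$ in $G\circ H$ (its endpoints are non-adjacent because $gg'\notin E(G)$) whose internal vertex lies outside $S_{i}\times V(H)$. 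If $g=g'$, then $h\neq h'$; if $hh'\in E(H)$ the two vertices are adjacent in $G\circ H$, and otherwise, taking by the structural fact a neighbor $g''\notin S_{i}$ of $g$, the path $(g,h)\,(g'',h)\,(g,h')$ is a geodesic of length $2$ in $G\circ H$ with internal vertex outside $S_{i}\times V(H)$. Thus each $S_{i}\times V(H)$ is a $2$DMV set, and $\chi_{\mu_{2}}(G\circ H)\leq\ell=\chi_{\mu_{2}}(G)$.

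For sharpness, I would use a cycle: $G\cong C_{5}$ satisfies $\delta(G)\geq2$ and $g(G)\geq5$, and by Proposition~\ref{str0} we have $\chi_{\mu_{2}}(C_{5})=2$; since $C_{5}\circ H$ is never a complete graph, it has $\chi_{\mu_{2}}\geq2$, so $\chi_{\mu_{2}}(C_{5}\circ H)=2=\chi_{\mu_{2}}(C_{5})$. I do not anticipate any serious difficulty beyond the structural fact in the second paragraph; the remainder is a routine transfer of geodesics from $G$ to $G\circ H$.
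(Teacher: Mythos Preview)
Your proof is correct and follows essentially the same approach as the paper: lift a $\chi_{\mu_{2}}(G)$-coloring to $G\circ H$ via the projection and verify that each $S_i\times V(H)$ is a $2$DMV set, with the only nontrivial point being that every vertex of a $2$DMV color class has a neighbor outside that class. Your proof of this structural fact (uniqueness of the $a,b$-geodesic through $g$) is a slight rephrasing of the paper's argument (which instead produces an explicit $4$-cycle $xzygx$ from a second common neighbor $z\notin S$ supplied by the $2$DMV property), and your sharpness example $C_5$ is among those the paper uses.
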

\begin{proof}
Let $\mathcal{A}=\{A_{1},\ldots,A_{|\mathcal{A}|}\}$ be any $\chi_{\mu_{2}}(G)$-coloring. Consider $\mathcal{B}=\{A_{1}\times V(H),\ldots,A_{|\mathcal{A}|}\times V(H)\}$ and let $(g,h),(g',h')$ be distinct vertices in $A_{i}\times V(H)$ for any $i\in[|\mathcal{A}|]$. Without loss of generality, we may assume that $(g,h)(g',h')\notin E(G\circ H)$. Assume first that $g\neq g'$. This implies, by the adjacency rule of $G\circ H$, that $gg'\notin E(G)$. Due to this, since $A_{i}$ is a $2$DMV set in $G$, there exists a $g,g'$-geodesic $gg''g$ in $G$ such that $g''\notin A_{i}$. This in particular leads to the $(g,h),(g',h')$-geodesic $(g,h)(g'',h)(g',h')$, where $(g'',h)\notin A_{i}\times V(H)$.

Now, we assume that $g=g'$. Suppose that $N_{G}[g]\subseteq A_{i}$. Using this together with $\delta(G)\geq2$, we have $|A_{i}|\geq3$, and let $\{x,y\}\subseteq A_i\cap N_G(g)$. Because $g(G)\geq5$, it follows that $xy\notin E(G)$. Since $A_{i}$ is a $2$DMV set in $G$, there is an $x,y$-geodesic $xzy$ in $G$ whose internal vertex $z$ does not belong to $A_{i}$. This leads to the cycle $xzygx$ in $G$, which is a contradiction to the fact that $g(G)\geq5$. Thus, $g$ has a neighbor $x\in V(G)\setminus A_{i}$. Hence $(g,h)(x,h)(g',h')$ is a $(g,h),(g',h')$-geodesic in $G\circ H$ such that $(x,h)\notin A_{i}\times V(H)$.

In either case, we have proved that any two nonadjacent vertices in $A_{i}\times V(H)$ are connected by a geodesic, of length at most two, in $G\circ H$ whose internal vertex does not belong to $A_{i}\times V(H)$. Therefore, $\mathcal{B}$ is a partition of $V(G\circ H)$ into $2$DMV sets. Thus, $\chi_{\mu_{2}}(G\circ H)\leq|\mathcal{B}|=|\mathcal{A}|=\chi_{\mu_{2}}(G)$.

To see that the bound is sharp, let $G$ be any graph with $\delta(G)\geq2$, $g(G)\geq5$ and $\chi_{\mu_{2}}(G)=2$. (Note that such graphs exist. For example, consider $G\in \{C_{5},C_{6}\}$ or let $G$ be obtained from the cycle $v_{1}v_{2}\ldots,v_{8}v_{1}$ by adding the chords $v_{1}v_{5}$ and $v_{3}v_{7}$.) Then, for any graph $H$, we have $\chi_{\mu_{2}}(G\circ H)=2$ due to the bound and the fact that $G\circ H$ is not a complete graph.
\end{proof}

It should be noted that the conditions in Theorem \ref{con} cannot be relaxed. In fact, we can even prove a stronger statement as follows.

\begin{prop}\label{pr}
Theorem \ref{con} is best possible in the sense that $s=2$ and $t=5$ are the smallest possible integers such that $\delta(G)\geq s$ and $g(G)\geq t$ results in the inequality in Theorem~\ref{con}.
\end{prop}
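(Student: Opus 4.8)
The claim is that Theorem~\ref{con} is optimal in \emph{two} separate directions: one cannot lower the minimum-degree hypothesis below $2$, and one cannot lower the girth hypothesis below $5$. So the plan is to exhibit two families of counterexamples. For the degree direction, I would take graphs $G$ with $\delta(G)=1$ (so $s=1$) but arbitrarily large girth, and show $\chi_{\mu_2}(G\circ H)>\chi_{\mu_2}(G)$ for a suitable $H$; the paths $P_n$ are the natural candidates, since $\chi_{\mu_2}(P_n)=\lceil n/2\rceil$ by Proposition~\ref{str0} while $g(P_n)=\infty$, so only the degree condition fails. For the girth direction, I would take graphs $G$ with $\delta(G)\ge 2$ but $g(G)=4$ (so $t=4$), for instance a cycle $C_4$ or more flexibly $K_{2,m}$ or $C_4$ with pendant-free modifications, and again compare $\chi_{\mu_2}(G\circ H)$ with $\chi_{\mu_2}(G)$ for an appropriate choice of $H$.

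The key computational steps, in order, are as follows. First I would compute $\chi_{\mu_2}(G)$ for the chosen base graph $G$ in each family; for $C_4$ this is $\theta(C_4)=2$ via Observation~\ref{ob:1} since $\operatorname{diam}(C_4)=2$, and for paths it is $\lceil n/2\rceil$. Second, and this is the crux, I would compute (or lower-bound) $\chi_{\mu_2}(G\circ H)$ for a carefully selected $H$ — most likely $H=\overline{K_t}$ or $H=K_t$ for suitable $t$ — and show it strictly exceeds $\chi_{\mu_2}(G)$. The mechanism is that in $G\circ H$, an $H$-fiber over a vertex $g$ together with vertices of other fibers can fail to be a $2$DMV set precisely because a ``detour'' vertex $g''\notin A_i$ adjacent to $g$ may not exist when $\delta(G)=1$, or when $g(G)=4$ the short cycles let the detour vertex land back inside the color class; so the counterexample must be engineered so that every partition into $\chi_{\mu_2}(G)$ classes, when lifted to fibers, breaks the visibility condition. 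Third, I would record the resulting strict inequalities and conclude that $s$ cannot be reduced to $1$ and $t$ cannot be reduced to $4$, which is exactly the assertion.

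The main obstacle I anticipate is the second step: proving a \emph{lower} bound $\chi_{\mu_2}(G\circ H)>\chi_{\mu_2}(G)$, since lower bounds on $\chi_{\mu_2}$ generally require ruling out all colorings of a given size, not just exhibiting one bad coloring. For the degree direction this should be tractable by exploiting that a pendant vertex $v$ of $G$ has a unique neighbor $u$, so in $G\circ H$ the fiber $\{v\}\times V(H)$ has all its ``detours'' forced through $\{u\}\times V(H)$, and combining several pendant paths (as in the sharpness construction at the end of Theorem~\ref{General}, where fibers over far-apart pendant paths are forced into distinct colors) should push the chromatic number up. For the girth-$4$ direction, the right base graph must be chosen so that the $4$-cycles genuinely obstruct the fiber argument in Theorem~\ref{con}'s second case ($g=g'$ with $N_G[g]$ potentially inside a class); I would look for a small explicit $G$ with $g(G)=4$, $\delta(G)\ge 2$, and a vertex whose closed neighborhood is forced into one color class of every optimal $\chi_{\mu_2}(G)$-coloring, then take $H$ with $hh'\notin E(H)$ for some $h,h'$ to activate the failure. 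Verifying these small cases is routine once the right examples are in hand; finding examples that simultaneously fail the lifted-visibility condition and have no cheaper recoloring is where the work lies.
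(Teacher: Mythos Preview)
Your overall strategy is correct --- you need counterexamples with $\delta(G)=1$ (arbitrary girth) and with $g(G)=4$ (but $\delta(G)\ge 2$) --- but the specific candidates you propose will not work, and the reason is a systematic obstruction you have overlooked.

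Recall Theorem~\ref{Lexic}: $\chi_{\mu_2}(G\circ H)\le \theta(G^{[\natural 2]})=\chi_{i\mu_2}(G)$ for any $H$. So any graph $G$ with $\chi_{i\mu_2}(G)=\chi_{\mu_2}(G)$ automatically satisfies the conclusion of Theorem~\ref{con} regardless of its girth or minimum degree, and hence cannot serve as a counterexample. Your proposed graphs all fall into this trap. For paths, the coloring $\{v_1,v_3\},\{v_2,v_4\},\{v_5,v_7\},\ldots$ is an I$2$DMV partition into $\lceil n/2\rceil$ classes, so $\chi_{i\mu_2}(P_n)=\chi_{\mu_2}(P_n)$ and $\chi_{\mu_2}(P_n\circ H)\le \chi_{\mu_2}(P_n)$ for every $H$. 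For $C_4$ and $K_{2,m}$, the bipartition itself is an I$2$DMV $2$-coloring, so again $\chi_{i\mu_2}=\chi_{\mu_2}=2$ and the inequality of Theorem~\ref{con} holds. Thus none of your candidates can produce the strict inequality $\chi_{\mu_2}(G\circ H)>\chi_{\mu_2}(G)$.

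What is actually needed are graphs $G$ with $\chi_{i\mu_2}(G)>\chi_{\mu_2}(G)$, and these are not easy to find among the simplest families. The paper constructs two ad hoc examples: for the girth-$4$ direction, a specific graph on $19$ vertices (Figure~\ref{convexP3}) with $g(G)=4$, $\delta(G)\ge 2$, and $\chi_{\mu_2}(G)=2$, and then proves by an exhaustive case analysis that $\chi_{\mu_2}(G\circ P_3)\ge 3$; for the minimum-degree direction, a specific tree $T$ (Figure~\ref{Fig4}) with $\chi_{\mu_2}(T)=3$, followed by a direct argument that $\chi_{\mu_2}(T\circ\overline{K_2})>3$. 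In both cases the lower bound on the product is obtained by assuming an optimal coloring exists and tracing forced color assignments to a contradiction --- there is no general lemma doing the work. Your last paragraph correctly anticipates that this lower-bound step is the hard part, but you underestimate how delicate the choice of $G$ must be: the example has to be rigid enough that \emph{every} optimal $2$DMV coloring of $G$ forces $N_G[g]$ into a single class for some $g$, yet flexible enough that $\chi_{\mu_2}(G)$ itself stays small.
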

\begin{proof}
Consider the graph $G$ depicted in Fig.~\ref{convexP3}. Clearly, $g(G)=4$. We observe that the labeling in the figure provides an optimal $2$DMV coloring with two colors. Now, consider the lexicographic product graph $G\circ P_{3}$. Suppose to the contrary that $\chi_{\mu_2}(G\circ P_{3})=2$ and that $f$ is a $\chi_{\mu_2}(G\circ P_{3})$-coloring.
Let $P_{3}:v_{1}v_{2}v_{3}$ and $\alpha,\beta,\gamma\in \{1,2\}$. For the sake of convenience, for any vertex $g\in V(G)$, by assigning $\alpha\beta\gamma$ to $g$ we mean the colors $\alpha,\beta,\gamma$ are assigned to the vertices $(g,v_{1}),(g,v_{2}),(g,v_{3})\in V(G\circ P_{3})$, respectively, under $f$.

Since $d_{G}(b,e)=d_{G}(c,e)=d_{G}(d,e)>2$, the adjacency rule of $G\circ P_{3}$ implies that
\begin{center}
$f\Big{(}\{e\}\times V(P_{3})\Big{)}\cap\Big{(}f\big{(}\{b\}\times V(P_{3})\big{)}\cup f\big{(}\{c\}\times V(P_{3})\big{)}\cup f\big{(}\{d\}\times V(P_{3})\big{)}\Big{)}=\emptyset$.
\end{center}
Due to this, and because $\chi_{\mu_2}(G\circ P_{3})=2$, we may assume without loss of generality that $111$ is assigned to $b,c,d$ and $222$ is assigned to $e$ under $f$. With this in mind, since $d_{G}(b,d_{11})=d_{G}(b,d_{12})=d_{G}(d,b_{11})=d_{G}(d,b_{12})>2$, it necessarily follows that $f(b_{11})=f(b_{12})=f(d_{11})=f(d_{12})=222$.

\begin{figure}[ht!]
\begin{center}
\begin{tikzpicture}[scale=0.5,style=thick,x=2cm,y=2cm]
\node[circle, draw] (a) at (-0.25,0) {2};
\node[circle, draw] (a1) at (-3.25,2) {1};
\node[circle, draw] (a2) at (-0.25,2) {1};
\node[circle, draw] (a3) at (2.75,2) {1}; 
\node[circle, draw] (b1) at (-4,3.5) {2};  
\node[circle, draw] (b2) at (-2.5,3.5) {2};  
\node[circle, draw] (b3) at (-1,3.5) {2}; 
\node[circle, draw] (b4) at (0.5,3.5) {2}; 
\node[circle, draw] (b5) at (2,3.5) {2}; 
\node[circle, draw] (b6) at (3.5,3.5) {2}; 
\node[circle, draw] (b135) at (-3,6) {1}; 
\node[circle, draw] (b146) at (-1.25,6) {1};
\node[circle, draw] (b236) at (0.75,6) {1};
\node[circle, draw] (b245) at (2.5,6) {1};
\node[circle, draw] (c) at (-0.25,7.5) {2};
\draw (a)--(a1)--(a2)--(a3)--(a);
\draw (b1)--(a1)--(b2);
\draw (b3)--(a2)--(b4);
\draw (b5)--(a3)--(b6);
\draw (b1)--(a1)--(b2);
\draw (b1)--(b135)--(b2);
\draw (b3)--(b135)--(b4);
\draw (b5)--(b135)--(b6);
\draw (b1)--(b146)--(b2);
\draw (b3)--(b146)--(b4);
\draw (b5)--(b146)--(b6);
\draw (b1)--(b236)--(b2);
\draw (b3)--(b236)--(b4);
\draw (b5)--(b236)--(b6);
\draw (b1)--(b245)--(b2);
\draw (b3)--(b245)--(b4);
\draw (b5)--(b245)--(b6);
\draw (a) .. controls (-6,0.5) and (-5,5) .. (b135);
\draw (a) .. controls (5.5,0.5) and (4.5,5) .. (b245);

\node (a) at (-0.25,-0.45) {$a$};
\node (a) at (-3.25,1.52) {$b$};
\node (a) at (-0.25,1.52) {$c$};
\node (a) at (2.75,1.52) {$d$};
\node (a) at (-3.95,3) {$b_{11}$};
\node (a) at (-2.45,3) {$b_{12}$};
\node (a) at (-0.99,3) {$c_{11}$};
\node (a) at (0.55,3) {$c_{12}$};
\node (a) at (2.02,3) {$d_{11}$};
\node (a) at (3.55,3) {$d_{12}$};
\node (a) at (-3.05,6.47) {$e_{1}$};
\node (a) at (-1.3,6.47) {$e_{2}$};
\node (a) at (0.8,6.47) {$e_{3}$};
\node (a) at (2.55,6.47) {$e_{4}$};
\node (a) at (-0.25,7.95) {$e$};

\draw (b135)--(c)--(b146);
\draw (b236)--(c)--(b245);
\end{tikzpicture}
\caption{A $\chi_{\mu_2}(G)$-coloring of the graph $G$ with $2$ colors.}
\label{convexP3}
\end{center}
\end{figure}
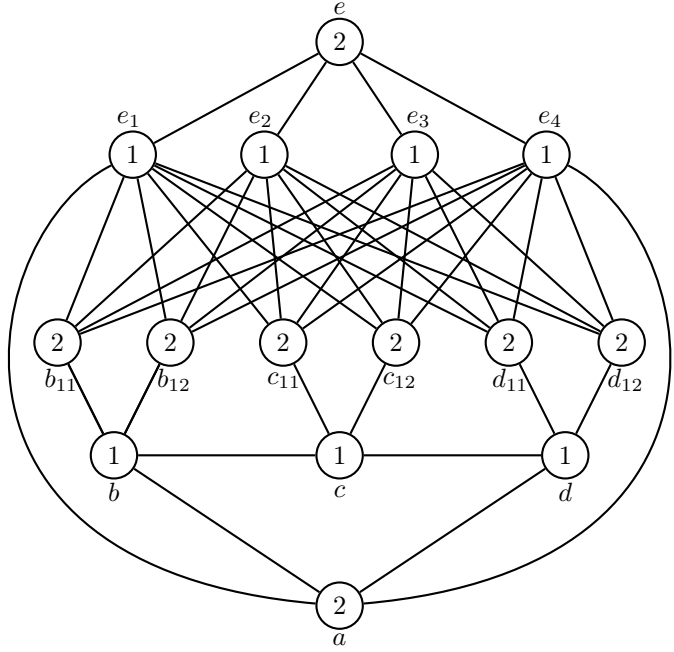

Since $f$ is a $2$DMV coloring of $G\circ P_{3}$ and because $(c,v_{1})$ and $(c,v_{3})$ are two nonadjacent vertices with color $1$, it follows that they must have a common neighbor with color $2$. Without loss of generality, we may assume that $(c_{12},v_{i})$ is such a neighbor for some $i\in[3]$. Now, because $d_{G}(a,c_{12})>2$, it necessarily follows that $f(a)=111$ and $f(c_{12})=f(c_{11})=222$. On the other hand, $f(a)=111$ and $d_{G}(a,e_{2})=d_{G}(a,e_{3})>2$ result in $f(e_{2})=f(e_{3})=222$. This is a contradiction because $(e_{2},v_{1})$ and $(e_{2},v_{3})$, with $f\big{(}(e_{2},v_{1})\big{)}=f\big{(}(e_{2},v_{3})\big{)}=2$, are two nonadjacent vertices in $G\circ P_{3}$ with no common neighbor colored $1$ under $f$. Hence, $\chi_{\mu_2}(G\circ P_{3})\geq3$. (One can prove that, in fact, $\chi_{\mu_2}(G\circ P_{3})=3$). In any case, we derive that $$\chi_{\mu_2}(G\circ P_{3})>\chi_{\mu_2}(G).$$
 
For a graph $G$ with girth $3$, take $G\cong K_{3}$, and let $H$ be any non-complete graph on at least $2$ vertices. Clearly, $\chi_{\mu_2}(K_{3})=1$, while $\chi_{\mu_2}(K_{3}\circ H)\geq2$ as $K_{3}\circ H$ is not a complete graph. 

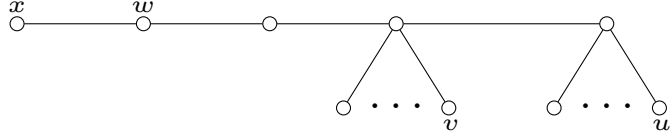
\begin{figure}[ht!]
\centering
\begin{tikzpicture}[scale=0.56, transform shape]

\node [draw, shape=circle] (x) at (0,0) {};
\node [draw, shape=circle] (y) at (5,0) {};
\draw (x)--(y);

\node [draw, shape=circle] (x1) at (-1.25,-2) {};
\node [draw, shape=circle] (x2) at (1.25,-2) {};
\draw (x1)--(x)--(x2);

\node [draw, shape=circle] (y1) at (3.75,-2) {};
\node [draw, shape=circle] (y2) at (6.25,-2) {};
\draw (y1)--(y)--(y2);

\node [draw, shape=circle] (u) at (-3,0) {};
\node [draw, shape=circle] (v) at (-6,0) {};
\node [draw, shape=circle] (w) at (-9,0) {};

\draw (x)--(u)--(v)--(w);

\node [scale=3.25] at (4.5,-1.95) {\small $.$};
\node [scale=3.25] at (5,-1.95) {\small $.$};
\node [scale=3.25] at (5.5,-1.95) {\small $.$};
\node [scale=3.25] at (-0.5,-1.95) {\small $.$};
\node [scale=3.25] at (0,-1.95) {\small $.$};
\node [scale=3.25] at (0.5,-1.95) {\small $.$};

\node [scale=3.25] at (6.3,-2.4) {\tiny $u$};
\node [scale=3.25] at (1.3,-2.4) {\tiny $v$};
\node [scale=3.25] at (-6,0.4) {\tiny $w$};
\node [scale=3.25] at (-9,0.4) {\tiny $x$};

\end{tikzpicture}
\caption{{\small The tree $T$, with and $\chi_{\mu_{2}}(T)=3$, mentioned in the proof of Proposition \ref{pr}.}}\label{Fig4}
\end{figure}

Finally, let $T$ be the tree depicted in Fig.~\ref{Fig4}, in which the vertices $u$, $v$, $w$ and $x$ are distinguished. (Note that  $g(T)=\infty$, while $\delta(T)=1$.) It is easy to check that $\chi_{\mu_2}(T)=3$. Now, we consider the graph $T\circ \overline{K_{2}}$, in which $V(\overline{K_{2}})=\{a,b\}$. Suppose that $\chi_{\mu_2}(T\circ \overline{K_{2}})=3$ and let $f$ be a $\chi_{\mu_2}(T\circ \overline{K_{2}})$-coloring. Since any two vertices from $\{(u,a),(v,a),(w,a)\}$ are at distance at least $3$ apart, we may assume that $f\big{(}(u,a)\big{)}=1$, $f\big{(}(v,a)\big{)}=2$ and $f\big{(}(w,a)\big{)}=3$. In a similar fashion, and since $f$ assigns precisely $3$ colors to the vertices of $T\circ \overline{K_{2}}$, we deduce that $f\big{(}(u,b)\big{)}=1$, $f\big{(}(v,b)\big{)}=2$ and $f\big{(}(w,b)\big{)}=3$. We now turn our attention to the vertex $x$. Since $f$ is a $2$DMV coloring of $T\circ \overline{K_{2}}$, it follows that at least one of the vertices $f\big{(}(x,a)\big{)}$ and $f\big{(}(x,b)\big{)}$, say $f\big{(}(x,a)\big{)}$, does not equal $3$. On the other hand, $f\big{(}(x,a)\big{)}$ is neither $1$ nor $2$ because it is at distance more than $2$ from $(u,a)$ and $(v,a)$. This is a contradiction. Therefore, $\chi_{\mu_2}(T\circ \overline{K_{2}})>3=\chi_{\mu_2}(T)$.
\end{proof}


\section{Strong product of graphs}
\label{sec:stong}

We start with a specific example of strong product graphs where one factor is a path and the other is a complete graph. We will present exact values of their $k$DMV numbers for all possible values of $k$. This result will then be used to prove the sharpness of a canonical upper bound on $\hmk(G\boxtimes H)$, which is the main result of this section. 

Recall that the distance between any two vertices $(g,h),(g',h')\in V(G\strong H)$ is given by 
\begin{center}
$d_{G\strong H}\big{(}(g,h),(g',h')\big{)}=\max \{d_{G}(g,g'),d_{H}(h,h')\}$.
\end{center}

It is easy to observe that $\chi_{\mu_{1}}(P_n \strong K_m)=\theta(P_n \strong K_m)=\lceil \frac{n}{2}\rceil$ for all positive integers $n$ and $m$. Moreover, $\chi_{\mu_{k}}(P_n \strong K_m)=\chi_{\mu}(P_n \strong K_m)$ if $k\geq n-1$. In such a situation, we have $\chi_{\mu}(P_n \strong K_m)=1$ if $n\leq2$, $\chi_{\mu}(P_n \strong K_m)=2$ if $n=3$ and $m\geq2$, and $\chi_{\mu}(P_n \strong K_1)=\lceil n/2\rceil$. With this in mind, we restrict our attention to the case when $2\leq k\leq n-2$.

\begin{prop}\label{str1}
For any integers $n\geq 4$, $m\ge 2$ and $k\in \{2,\ldots,n-2\}$, 
\begin{equation*}
\chi_{\mu_k}(P_n \strong K_m)=\left \{
\begin{array}{lll}
2\big\lfloor\frac{n}{k+2}\big\rfloor & \mbox{if}\ n \equiv0\ \pmod{ k+2},\vspace{0.5mm}\\
2\big\lfloor\frac{n}{k+2}\big\rfloor+1 & \mbox{if}\ n \equiv1,2\ \pmod{k+2},\vspace{0.5mm}\\
2\big\lfloor\frac{n}{k+2}\big\rfloor+2 & \mbox{otherwise}.
\end{array}
\right.
\end{equation*}
\end{prop}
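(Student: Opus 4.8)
The statement is an exact-value formula for $\chi_{\mu_k}(P_n\boxtimes K_m)$, so the plan is the standard two-sided approach: exhibit a $k$DMV coloring using the claimed number of colors (upper bound), and prove a matching lower bound. For the construction, write $P_n:u_1u_2\cdots u_n$ and recall that in $P_n\boxtimes K_m$ the distance between $(u_i,h)$ and $(u_j,h')$ equals $\max\{|i-j|,d_{K_m}(h,h')\}$, which is just $|i-j|$ whenever $i\neq j$, and is $1$ when $i=j$ and $h\neq h'$. The key geometric fact is that an entire $K_m$-fiber $F_i=\{u_i\}\times V(K_m)$ together with any set of fibers whose indices are pairwise at distance at most $k-?$\, behaves well; more precisely, I would color by grouping consecutive fibers into blocks of $k+2$ fibers and, inside each block, use exactly two colors: color the first fiber $F_{j(k+2)+1}$ and the last fiber $F_{(j+1)(k+2)}$ with one colour and one colour respectively — no, rather, I would assign to the fibers $F_{j(k+2)+1},\dots,F_{j(k+2)+\lceil (k+2)/2\rceil}$ one color and the remaining fibers of the block the second color, so that within each color class any two vertices lie in fibers at index-distance at most $\lceil (k+2)/2\rceil-1\le k$, hence at distance at most $k$, and nonadjacent vertices $(u_i,h),(u_j,h')$ with $i<j$ see each other via the geodesic through $(u_{i+1},h)$ (or through $(u_i,h'')$ with $h''\ne h,h'$ when $i=j$), whose internal vertex lies in a different fiber or uses the third vertex of $K_m$, hence outside the color class — this is where $m\ge 2$ is used. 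Counting: a full block of $k+2$ fibers costs $2$ colors, and the leftover $n\bmod(k+2)$ fibers cost $0$, $1$, or $2$ more colors depending on whether the remainder is $0$, in $\{1,2\}$, or larger, which matches the three cases. I would need to double-check the boundary arithmetic carefully, in particular that a remainder of $1$ or $2$ fibers can always be absorbed with a single new color (a remainder of $1$ or $2$ fibers is a $2$DMV-safe single class since two fibers at index-distance $1$ have diameter $1\le k$).

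\textbf{Lower bound.} For the lower bound I would argue that in any $k$DMV coloring $f$ of $P_n\boxtimes K_m$, each color class, restricted to the set of fiber-indices it meets, must have ``index-diameter'' at most $k$ — indeed if $(u_i,h)$ and $(u_j,h')$ both get color $c$ with $|i-j|\ge k+1$ they are at distance $\ge k+1$, violating the $k$-distance requirement. Moreover I claim no color can occupy two fibers $F_i,F_{i'}$ with $|i-i'|\ge 2$ if it also occupies an ``internal'' fiber: if color $c$ appears at $(u_i,h)$ and $(u_j,h')$ with $i<j$, then $i<\ell<j$ forces every vertex of $F_\ell$ to avoid color $c$, since the unique geodesics between $(u_i,h)$ and $(u_j,h')$ all pass through a vertex of $F_\ell$ (as $|i-j|\ge 2$, every $(u_i,h),(u_j,h')$-geodesic of length $|i-j|$ visits fiber $\ell$, and if $(u_\ell,h'')$ had color $c$ this internal vertex would lie in the class, but one must be careful that when $|i-j|=1$ there is no internal fiber at all). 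Combining, each color class lives inside a window of at most $k+1$ consecutive fiber-indices (since internal fibers are blocked, the class actually lives in the two endpoint fibers plus... hmm — more carefully, the set of indices hit by one color is an interval of length $\le k+1$, but the interior fibers of that interval cannot be fully hit). The clean way: show that among every $k+2$ consecutive fibers, at least two distinct colors must appear on the two ``extreme'' fibers $F_i$ and $F_{i+k+1}$ — because if they shared a color $c$, the fibers strictly between would be forbidden color $c$, yet index-distance $k+1$ already contradicts the $k$-bound; so in fact a single color cannot span $k+2$ consecutive fibers, forcing a counting bound $\chi_{\mu_k}\ge 2\lfloor n/(k+2)\rfloor$ plus corrections from the tail. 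I would make this precise by partitioning $\{1,\dots,n\}$ into $\lfloor n/(k+2)\rfloor$ blocks of length $k+2$ plus a remainder, showing each length-$(k+2)$ block forces $\ge 2$ new colors not reused across blocks (using that a color used in block $B$ and block $B'$ with a full block between them would span $> k+1$ indices).

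\textbf{Main obstacle.} The delicate point is the lower bound bookkeeping in the ``$+1$'' versus ``$+2$'' tail cases, i.e., showing a remainder of exactly $1$ or $2$ fibers genuinely forces only one extra color beyond $2\lfloor n/(k+2)\rfloor$ while a remainder of $3$ or more forces two — this requires a careful argument that colors cannot be shared between the last full block and the tail without exceeding the index-diameter $k$, and conversely that the construction really achieves the claimed value in the $\equiv 1,2$ case. A second subtlety is the role of $m\ge 2$: when $i=j$ (two vertices in the same fiber, nonadjacent is impossible in $K_m$ since $K_m$ is complete — wait, in $P_n\boxtimes K_m$ any two vertices of the same fiber are adjacent), so actually vertices in one fiber are pairwise adjacent and trivially mutually visible, and $m\ge 2$ is used instead to guarantee that a geodesic between two vertices of distinct fibers with the same second coordinate can be ``deflected'' — but in the strong product $(u_i,h)(u_{i+1},h')\cdots$ already works with $h'=h$, so $m\ge 2$ may only be needed to separate the $k\ge n-1$ boundary cases already excluded; I would verify whether $m\ge 2$ is truly needed in the range $2\le k\le n-2$ or is only a convenient hypothesis inherited from the surrounding discussion, and state the proof accordingly.
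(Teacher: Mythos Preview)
Your upper-bound construction is wrong. You propose to color each block of $k+2$ consecutive fibers by assigning one color to the first $\lceil(k+2)/2\rceil$ fibers \emph{entirely} and the second color to the remaining fibers entirely. But then take two same-colored vertices $(u_i,h)$ and $(u_j,h')$ with $j-i\ge 2$ (this occurs already for $k\ge 2$). Every $(u_i,h),(u_j,h')$-geodesic in $P_n\boxtimes K_m$ has length exactly $j-i$ and therefore visits one vertex from \emph{each} intermediate fiber $F_{i+1},\dots,F_{j-1}$; since under your coloring every vertex of $F_{i+1}$ carries the same color as $(u_i,h)$, no geodesic avoids the class. Your claimed witness ``the geodesic through $(u_{i+1},h)$'' has its internal vertex $(u_{i+1},h)$ in the class, and deflecting the second coordinate does not help because the whole fiber is monochromatic.

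The paper's construction is essentially different and this is exactly where $m\ge 2$ is used: inside each block of $k+2$ fibers, the first fiber is entirely color $c_{i1}$, the last fiber is entirely color $c_{i2}$, and in each of the $k$ middle fibers one designates $m-1$ vertices color $c_{i1}$ and one vertex color $c_{i2}$. Now any two $c_{i1}$-vertices at index-distance $\ge 2$ can be joined by a geodesic that threads through the single $c_{i2}$-vertex in each intermediate fiber, and symmetrically for $c_{i2}$. So the key idea you are missing is that the two colors must \emph{interleave within fibers}, not partition the fibers.

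Your lower-bound sketch also has a gap. You assert one can show ``each length-$(k+2)$ block forces $\ge 2$ new colors not reused across blocks'', but colors \emph{can} be shared between adjacent blocks (e.g.\ a color on the last fiber of $G_i$ and the first fiber of $G_{i+1}$ spans index-distance $1$). The paper handles this by proving the stronger statement $|f(V(G_i)\cup V(G_{i+1}))|\ge 4$ for every pair of consecutive blocks, via a short case analysis ruling out $|f(V(G_i)\cup V(G_{i+1}))|=3$; combined with disjointness for non-adjacent blocks this yields $\ge 2\eta$. The $+1$/$+2$ tail cases are then each a separate case analysis of the same flavor. Your proposal does not yet contain a mechanism that would produce the ``$\ge 4$ for two consecutive blocks'' step.
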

\begin{proof}
Let $P_{n}:u_{1}u_{2}\ldots u_{n}$, $V(K_{m})=\{v_1,\ldots,v_m\}$ and let $\eta=\lfloor\frac{n}{k+2}\rfloor$. Consider the vertex partition of $P_n \strong K_m$ into sets $V_i=\{(u_i,v_j)\mid j\in[m]\}$ with $i\in[n]$. Let $G_i$, with $i\in[\eta]$, be the subgraph of $P_n \strong K_m$ induced by $V_{(k+2)(i-1)+1}\cup V_{(k+2)(i-1)+2}\cup \cdots\cup V_{(k+2)i}$. Each $G_i$ can be colored by assigning $c_{i1}$ to the vertices in $V_{(k+2)(i-1)+1}$ and $m-1$ vertices from each $V_{(k+2)(i-1)+j}$ with $j\in[k+1]\setminus \{1\}$, and $c_{i2}$ to the other vertices (see Fig.~\ref{Colo}).

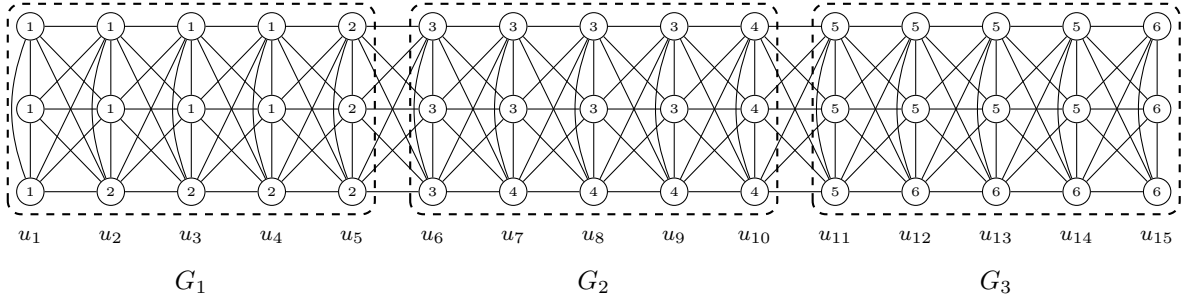
\begin{figure}[ht!]
		\centering
		\begin{tikzpicture}

			\def\stepx{1.07}
			\def\stepy{1.1}

			\foreach \i in {1,...,15} {
				\foreach \j in {1,...,3} {
				
					\pgfmathtruncatemacro{\pos}{\i - 1}
					\pgfmathtruncatemacro{\block}{ \pos / 5 }
					\pgfmathtruncatemacro{\local}{ mod(\pos, 5) }
					\ifnum\local=0
					\pgfmathtruncatemacro{\col}{2*\block + 1}
					\else\ifnum\local=4
					\pgfmathtruncatemacro{\col}{2*\block + 2}
					\else
					\ifnum\j=1
					\pgfmathtruncatemacro{\col}{2*\block + 2}
					\else
					\pgfmathtruncatemacro{\col}{2*\block + 1}
					\fi
					\fi\fi

					\node[circle, draw, inner sep=2pt, minimum size=8pt, font=\tiny] (n\i\j) at ({(\i-1)*\stepx}, {(\j-1)*\stepy}) {\col};
				}
			}
			
			\foreach \i in {1,...,15} {
	
				\draw (n\i1) -- (n\i2);
				\draw (n\i2) -- (n\i3);
			
				\draw (n\i1) to[bend left=20] (n\i3);
			}

			\foreach \i in {1,...,14} {
				\pgfmathtruncatemacro{\nexti}{\i + 1}
				\foreach \j in {1,...,3} {
					\draw (n\i\j) -- (n\nexti\j);
				}
			}

			\foreach \i in {1,...,14} {
				\pgfmathtruncatemacro{\nexti}{\i + 1}
				\foreach \j in {1,...,3} {
					\foreach \k in {1,...,3} {
						\ifnum\j=\k \else
						\draw (n\i\j) -- (n\nexti\k);
						\fi
					}
				}
			}

			\draw[rounded corners=5pt, dashed, thick] (-0.3,-0.3) rectangle (4*\stepx+0.3, 2*\stepy+0.3);
			\draw[rounded corners=5pt, dashed, thick] (5*\stepx-0.3,-0.3) rectangle (9*\stepx+0.3, 2*\stepy+0.3);
			\draw[rounded corners=5pt, dashed, thick] (10*\stepx-0.3,-0.3) rectangle (14*\stepx+0.3, 2*\stepy+0.3);

			\foreach \i in {1,...,15} {
				\node at ({(\i-1)*\stepx}, -0.6) {\footnotesize \( u_{\i} \)};
			}

			\node at ($(0, -1.2)!0.5!(4*\stepx, -1.2)$) {\( G_1 \)};
			\node at ($(5*\stepx, -1.2)!0.5!(9*\stepx, -1.2)$) {\( G_2 \)};
			\node at ($(10*\stepx, -1.2)!0.5!(14*\stepx, -1.2)$) {\( G_3 \)};
			
		\end{tikzpicture}
		\caption{\small A $3$DMV coloring of $P_{15} \strong K_3$ using $6$ colors. Vertices are partitioned into sets $V_i$, as per the vertices $u_{i}$, each forming a triangle. The subgraphs $G_1$, $G_2$ and $G_3$ are outlined with dashed rounded rectangles.}\label{Colo}
	\end{figure}

If $n\equiv0$ (mod $k+2$), then this is a $k$DMV coloring of $P_n \strong K_m$ with $2\eta$ colors. If $n\equiv1,2$ (mod $k+2$), then since the subgraph induced by $V_{n-1}\cup V_n$ is complete, we use only one additional color for the vertices in $V_{n-1}\cup V_n$. In the remaining cases, the subgraph induced by $V(P_n \strong K_m)\setminus \bigcup_{i=1}^{\eta}V(G_i)$ is not complete, but can be colored using two colors similarly to the way we colored the subgraphs $G_i$ with $i\in[\eta]$. Hence, the values given in the theorem are upper bounds for $\chi_{\mu_k}(P_{n}\strong K_{m})$ in the respective cases.

Now, let $f$ be a $\chi_{\mu_k}(P_{n}\strong K_{m})$-coloring. Note that every vertex in $V_i$ is at distance $|i-j|$ from every vertex in $V_j$. Hence, any color assigned to a vertex in $V_i$ cannot be given to any of the vertices in $V_j$ when $|i-j|>k$. If $n<k+2$, then it is readily seen that $\chi_{\mu_k}(P_{n}\strong K_{m})=1$ for $n\in \{1,2\}$, and $\chi_{\mu_k}(P_{n}\strong K_{m})=2$ otherwise. So, we may assume that $n\geq k+2$. 

We first observe that $|f\big{(}V(G_{i})|\geq2$ for each $i\in[\eta]$ and that $f\big{(}V(G_{i})\big{)}\cap f\big{(}V(G_{j})\big{)}=\emptyset$ for all $i,j\in[\eta]$ with $|i-j|>1$. Now, consider $G_{i}$ and $G_{i+1}$ for any $i\in[\eta-1]$. Since $d(u,v)>k$ and $d(v,w)>k$ for each $u\in V_{(k+2)(i-1)+1}$, $v\in V_{(k+2)i+1}$ and $w\in V_{(k+2)(i+1)}$, it follows that $|f\big{(}V(G_{i})\cup V(G_{i+1})\big{)}|\geq3$. Suppose that $|f\big{(}V(G_{i})\cup V(G_{i+1})\big{)}|=3$ for some $i\in[\eta-1]$. By definition, we may assume without loss of generality that $f(V_{(k+2)(i-1)+1})=\{1\}$, $f(V_{(k+2)i+1})=\{2\}$ and $f(V_{(k+2)(i+1)})=\{3\}$. Moreover, we necessarily have $f(V_{(k+2)(i-1)+2})=\{1\}$. In view of this, since $f$ is a $2$DMV coloring, any other vertex in $V(G_{i})\cup V(G_{i+1})$ receives a color different from $1$ under $f$. In particular, $f(x)=2$ for each $x\in V_{(k+2)(i-1)+k+1}\cup V_{(k+2)i}\cup V_{(k+2)i+1}$. This is impossible as $f$ is a $2$DMV coloring. In fact, in either case, we have shown that $|f\big{(}V(G_{i})\cup V(G_{j})\big{)}|\geq4$ for all distinct $i,j\in[\eta]$.

We deduce from the above argument that $\chi_{\mu_k}(P_n \strong K_m)\geq2\eta$. In particular, we have equality if $n\equiv0$ (mod $k+2$). We now consider two cases.\vspace{0.75mm}\\
\textit{Case 1.} $n\equiv t$ (mod $k+2$) for some $t\in \{3,\ldots,k+1\}$. We set $A=V(P_n \strong K_m)\setminus\big{(}\cup_{i=1}^{\eta-1}V(G_{i})\big{)}$, and let $A_{i}=V_{(k+2)(\eta-1)+i}$ for $i\in[k+2+t]$. Clearly, $|f(\cup_{i=1}^{3}A_{i})|\geq2$ and $|f(\cup_{i=k+t}^{k+2+t}A_{i})|\geq2$. Moreover, $f(\cup_{i=1}^{3}A_{i})\cap f(A_{k+1+t}\cup A_{k+2+t})=\emptyset$. Suppose that $|f(A)|=3$. This necessarily implies that $|f(\cup_{i=1}^{3}A_{i})|=2$ and $|f(A_{k+1+t}\cup A_{k+2+t})|=1$. Similarly, we have $|f(A_{1}\cup A_{2})|=1$ and $|f(\cup_{i=k+t}^{k+2+t}A_{i})|=2$. Due to this, we may assume that $f(A_{1}\cup A_{2})=\{1\}$ and $f(A_{k+1+t}\cup A_{k+2+t})=\{2\}$. Therefore, all vertices in $A_{3}\cup\ldots \cup A_{k+t}$ receive the same color under $f$. This contradicts the fact that $f$ is a $k$DMV coloring. Thus, $\chi_{\mu_k}(P_n \strong K_m)\geq \sum_{i=1}^{\eta-1}|V(G_{i})|+|f(A)|\geq2\eta+2$. This leads to the desired equality in this case due to the first part of the proof.\vspace{0.75mm}\\
\textit{Case 2.} $n\equiv1,2$ (mod $k+2$). Suppose to the contrary that $\chi_{\mu_k}(P_n \strong K_m)=2\eta$. This necessarily implies that $|f(A)|=2$. Without loss of generality, we may assume that $f(A)=\{1,2\}$, and that $f(A_{1})=\{1\}$ and $f(A_{k+3})=\{2\}$. Since $d(x,y)>k$ for each $x\in A_{1}$ and $y\in A_{k+2}$, it follows that $f(A_{k+2})=\{2\}$. Moreover, we have $f(A_{2})=\{1\}$ in a similar fashion. On the other hand, the rest of vertices in $A$ receive the colors $1$ or $2$ under $f$. Let $x$ be such a vertex and assume, without loss of generality, that $f(x)=2$. Then, every geodesic between $x$ and any vertex in $A_{k+3}$ has an internal vertex with color $2$, a contradiction. Therefore, $|f(A)|\geq3$. Hence, $\chi_{\mu_k}(P_n \strong K_m)\geq \sum_{i=1}^{\eta-1}|V(G_{i})|+|f(A)|\geq2\eta+1$, leading to equality in this case. This completes the proof.
\end{proof}

The following theorem establishes a canonical upper bound on $\chi_{\mu_k}(G \boxtimes H)$.

\begin{thm}\label{strong1}
For any two graphs $G$ and $H$ and any positive integer $k$, $\hmk(G \boxtimes H) \leq \hmk(G)\hmk(H)$. This bound is sharp.
\end{thm}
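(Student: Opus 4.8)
The plan is to establish the inequality through the canonical \emph{product coloring}. Fix $k$, put $a=\chi_{\mu_k}(G)$ and $b=\chi_{\mu_k}(H)$, and take $k$DMV colorings $f\colon V(G)\to[a]$ and $g\colon V(H)\to[b]$ realizing these values. Define $F\colon V(G\boxtimes H)\to[a]\times[b]$ by $F(x,y)=(f(x),g(y))$; this uses at most $ab$ colors, and its color classes are exactly the sets $C_{ij}=f^{-1}(i)\times g^{-1}(j)$. It therefore suffices to show that every $C_{ij}$ is a $k$DMV set in $G\boxtimes H$, which immediately gives $\chi_{\mu_k}(G\boxtimes H)\le ab$.

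To do this, fix $i,j$ and distinct vertices $(x,y),(x',y')\in C_{ij}$. Set $s=d_G(x,x')$ and $t=d_H(y,y')$; by the distance formula in the strong product, $d_{G\boxtimes H}((x,y),(x',y'))=\max\{s,t\}$, which is at most $k$ since $f^{-1}(i)$ and $g^{-1}(j)$ are $k$DMV sets. Because $x,x'\in f^{-1}(i)$ there is an $x,x'$-geodesic $P\colon x=p_0,p_1,\dots,p_s=x'$ in $G$ with $p_1,\dots,p_{s-1}\notin f^{-1}(i)$, and similarly a $y,y'$-geodesic $Q\colon y=q_0,q_1,\dots,q_t=y'$ in $H$ with $q_1,\dots,q_{t-1}\notin g^{-1}(j)$. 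Assuming without loss of generality (the two factors play symmetric roles, since $G\boxtimes H\cong H\boxtimes G$) that $s\le t$, and first treating the case $s\ge 1$, I would splice $P$ and $Q$ into the walk
\[
(p_0,q_0),(p_1,q_1),\dots,(p_s,q_s),(p_s,q_{s+1}),\dots,(p_s,q_t)
\]
in $G\boxtimes H$: the first $s$ edges are ``diagonal'' edges, valid because $p_{\ell-1}p_\ell\in E(G)$ and $q_{\ell-1}q_\ell\in E(H)$, while the remaining $t-s$ edges lie in the $H$-fiber over $x'$. Since $p_0,\dots,p_s$ and $q_0,\dots,q_t$ are distinct, this is a path, and it has length $t=\max\{s,t\}$, hence is a geodesic. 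Its internal vertices split into the $(p_\ell,q_\ell)$ with $1\le\ell\le s-1$, each outside $C_{ij}$ because $p_\ell\notin f^{-1}(i)$, and the $(p_s,q_\ell)$ with $s\le\ell\le t-1$, each outside $C_{ij}$ because $q_\ell$ is then an internal vertex of $Q$ and so $q_\ell\notin g^{-1}(j)$. The degenerate case $s=0$ (that is, $x=x'$) is handled directly by the fiber path $(x,q_0),(x,q_1),\dots,(x,q_t)$. Thus $C_{ij}$ is a $k$DMV set, and the bound follows. The step needing the most care is exactly this geodesic splicing: one must peel off whichever of the two factor geodesics is longer, and then check, separately on the diagonal segment and the fiber segment of the spliced path, that the coordinate that varies lands only on vertices of the appropriate factor that lie outside the relevant color class.

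For sharpness I would invoke Proposition~\ref{str1}. The value of $\chi_{\mu_k}(P_n\boxtimes K_m)$ given there depends only on $n$ and $k$, not on $m$ (as long as $m\ge2$). Combining this with the associativity of the strong product and the isomorphism $K_m\boxtimes K_{m'}\cong K_{mm'}$, one gets
\[
\chi_{\mu_k}\big((P_n\boxtimes K_m)\boxtimes K_{m'}\big)=\chi_{\mu_k}(P_n\boxtimes K_{mm'})=\chi_{\mu_k}(P_n\boxtimes K_m)
\]
for all $m,m'\ge2$ and all $k\in\{2,\dots,n-2\}$. Since $\chi_{\mu_k}(K_{m'})=1$, the pair $G=P_n\boxtimes K_m$, $H=K_{m'}$ attains equality in the theorem for every such $k$. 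As a cleaner special case, for $k=2$ one reads off from Proposition~\ref{str1} that $\chi_{\mu_2}(P_n\boxtimes K_m)=\lceil n/2\rceil=\chi_{\mu_2}(P_n)\,\chi_{\mu_2}(K_m)$ for all $n\ge4$ and $m\ge2$, so the bound is already sharp on genuinely two-factor examples.
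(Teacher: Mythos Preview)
Your proof is correct and follows essentially the same product-coloring strategy as the paper: take optimal $k$DMV colorings of $G$ and $H$, form the product partition $\{f^{-1}(i)\times g^{-1}(j)\}$, and verify each block is a $k$DMV set by splicing factor geodesics into a geodesic of $G\boxtimes H$; your sharpness argument via Proposition~\ref{str1} (and, implicitly, Proposition~\ref{str0} for $\chi_{\mu_2}(P_n)$) matches the paper's as well. The only cosmetic difference is in the splice: the paper lets the coordinate with the \emph{longer} geodesic advance at every step (padding the shorter one with its endpoint), so every internal vertex is excluded via that single coordinate, whereas your diagonal-then-fiber path requires checking the two segments against the two factors separately---both work.
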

\begin{proof}
Let $\{V_{1},\ldots,V_{\chi_{\mu_k}(G)}\}$ and $\{W_{1},\ldots,W_{\chi_{\mu_k}(H)}\}$ be a $\chi_{\mu_k}(G)$-coloring and a $\chi_{\mu_k}(H)$-coloring, respectively. We set $\mathbb{Q}=\{V_{i}\times W_{j}\mid i\in[\chi_{\mu_k}(G)]\ \mbox{and}\ j\in[\chi_{\mu_k}(H)]\}$. Clearly, $\mathbb{Q}$ is a vertex partition of $G\boxtimes H$. We consider any set $V_{i}\times W_{j}$ in $\mathbb{Q}$ and let $(g,h)$ and $(g',h')$ be two nonadjacent vertices in $V_{i}\times W_{j}$. Without loss of generality, we may assume that $s=d_{H}(h,h')\leq d_{G}(g,g')=t$. This in particular implies that $g\neq g'$, $gg'\notin E(G)$ and that $d_{G\boxtimes H}\big{(}(g,h),(g',h')\big{)}=t$. 

Since $V_{i}$ is a $k$DMV set in $G$ and $g,g'\in V_{i}$, there exists a $g,g'$-geodesic $gg_{1}\ldots g_{t-1}g'$ in $G$ whose internal vertices do not belong to $V_{i}$ and that $t\leq k$. If $d_{H}(h,h')\leq1$, then $(g,h)(g_{1},h)\ldots(g_{t-1},h)(g',h')$ is a $(g,h),(g',h')$-geodesic in $G\boxtimes H$ of length at most $k$ with internal vertices not in $V_{i}\times W_{j}$. So, we assume that $d_{H}(h,h')\geq2$.
In a similar fashion, there is an $h,h'$-geodesic $hh_{1}\ldots h_{s-1}h'$ whose internal vertices do not intersect $W_{j}$. Hence, the path $P:(g,h)(g_{1},x_{1})\ldots(g_{t-1},x_{t-1})(g',h')$ is a $(g,h),(g',h')$-geodesic in $G\boxtimes H$ of length at most $k$, in which 
\begin{equation*}
x_{i}=\left \{
\begin{array}{lll}
h_{i} & \mbox{if}\ i\in[s-1],\vspace{0.5mm}\\
h' & \mbox{otherwise}.
\end{array}
\right.
\end{equation*}
Note that no internal vertices of $P$ belong to $V_{i}\times W_{j}$. 

The above argument shows that $V_{i}\times W_{j}$ is a $k$DMV set in $G\boxtimes H$ for all $i\in[\chi_{\mu_k}(G)]$ and $j\in[\chi_{\mu_k}(H)]$. Thus, $\chi_{\mu_{k}}(G\boxtimes H)\leq|\mathbb{Q}|=\chi_{\mu_k}(G) \chi_{\mu_k}(H)$.

The bound is sharp for infinite families of strong product graphs with respectively large and small diameters. Together Proposition~\ref{str0}$(i)$ and Proposition~\ref{str1}, with $k=2$, imply that $\lceil \frac{n}{2}\rceil=\chi_{\mu_{2}}(P_{n}\boxtimes K_{m})\leq \chi_{\mu_{2}}(P_{n})\chi_{\mu_{2}}(K_{m})=\chi_{\mu_{2}}(P_{n})=\lceil \frac{n}{2}\rceil$ for each $n\geq4$ and $m\geq1$. Moreover, for any graph $G$ with a universal vertex and integers $k\geq2$ and $m\geq1$, it is readily observed that
\begin{equation*}
\chi_{\mu_{k}}(G\boxtimes K_{m})=\left \{
\begin{array}{lll}
1 & \mbox{if}\ G\cong K_{|V(G)|},\vspace{0.5mm}\\
2 & \mbox{otherwise},
\end{array}
\right.
\end{equation*}
showing the sharpness of the bound in this case either.
\end{proof}

\begin{remark}
For any integer $r\geq1$, there exists a graph $G$ for which the strict inequality chain $\chi_{\mu_r}(G)<\chi_{\mu_{r-1}}(G)<\ldots<\chi_{\mu_2}(G)$ holds. Such a graph can be constructed, in view of Proposition~\ref{str1}, by taking $G=P_{n}\strong K_{m}$, in which $m\ge 2$ and $n=\text{lcm}(3,4,\ldots,r+2)$. 
\end{remark}


\section{Cartesian products}
\label{sec:cart}

In this section, we bound the $2$DMV chromatic number of Cartesian product graphs from below and consider this parameter specifically in the case of hypercubes.

One may wonder if there exists an upper bound on $\hmd(G\square H)$ expressed as a function of $\hmd(G)$ and $\hmd(H)$. Recall that such a result holds for the strong product of graphs, where Theorem~\ref{strong1} gives a canonical upper bound on $\chi_{\mu_k}(G\boxtimes H)$ for any positive integer $k$. In the case of Cartesian products of graphs, this definitely fails when $k=1$, because $\chi_{\mu_1}(K_n)=1$, whereas $\chi_{\mu_1}(K_n\square K_n)=n$. When $k=2$, first recall that in graphs $G$ with diameter $2$, $\hmd(G)=\chi_\mu(G)$. Now, we invoke~\cite[Theorem 3.2]{KKVY}, which states that if $n \ge 2$ is large enough, then $\chi_\mu(K_n\square K_n)=\Theta(\sqrt{n})$. Hence, $\hmd(K_n\square K_n)$ can be arbitrarily large, while $\hmd(K_n)=1$. 
\begin{ob}
There exists no function $f:\mathbb{N}\to\mathbb{N}$ such that $\hmd(G\square G)\le f(\hmd(G))$ holds for all graphs $G$.
\end{ob}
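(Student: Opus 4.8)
The plan is to exhibit, for arbitrarily large target values $N$, a graph $G$ with $\hmd(G)$ small (in fact, $\hmd(G)=1$ suffices) while $\hmd(G\square G)\ge N$. Given the preceding discussion in the excerpt, the natural witness is the complete graph: take $G=K_n$. The two facts I would invoke are that $\hmd(K_n)=1$ (since $K_n$ has diameter $1$, every subset of its vertex set is trivially a $2$DMV set — indeed a mutual-visibility set — so one color class suffices) and that $\hmd(K_n\square K_n)$ grows without bound as $n\to\infty$.

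For the second fact I would argue as follows. The Cartesian product $K_n\square K_n$ has diameter $2$, because any two vertices differing in both coordinates are joined by a path of length $2$ through a vertex sharing one coordinate with each, while vertices agreeing in one coordinate are adjacent. Hence by the observation in the excerpt (that $\hmd(G)=\chi_\mu(G)$ whenever $\operatorname{diam}(G)=2$, since a $2$DMV set in a diameter-$2$ graph is exactly a mutual-visibility set), we get $\hmd(K_n\square K_n)=\chi_\mu(K_n\square K_n)$. Then I would cite \cite[Theorem 3.2]{KKVY}, which gives $\chi_\mu(K_n\square K_n)=\Theta(\sqrt n)$ for all sufficiently large $n$; in particular this quantity tends to infinity with $n$. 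Combining, $\hmd(K_n\square K_n)\to\infty$ while $\hmd(K_n)=1$ for all $n$.

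The contradiction argument then closes it: suppose such a function $f:\mathbb{N}\to\mathbb{N}$ existed, so that $\hmd(G\square G)\le f(\hmd(G))$ for every graph $G$. Applying this to $G=K_n$ yields $\hmd(K_n\square K_n)\le f(\hmd(K_n))=f(1)$ for every $n$, a fixed constant independent of $n$. This contradicts the fact that $\hmd(K_n\square K_n)$ is unbounded in $n$. Therefore no such function exists.

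There is no real obstacle here beyond assembling the cited ingredients correctly; the only point requiring a little care is the reduction $\hmd(K_n\square K_n)=\chi_\mu(K_n\square K_n)$, which hinges on verifying that $\operatorname{diam}(K_n\square K_n)=2$ so that the notions of $2$DMV set and mutual-visibility set coincide. Everything else is immediate from $\hmd(K_n)=1$ and the asymptotic growth result of \cite{KKVY}. (One could equally phrase the conclusion without the asymptotics: it is enough that $\chi_\mu(K_n\square K_n)$ is unbounded, which already follows from Observation~\ref{ob:2} together with the fact that $\mu(K_n\square K_n)$ grows sublinearly in $n^2$.)
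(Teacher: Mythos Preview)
Your argument is correct and is essentially identical to the paper's: the paper also takes $G=K_n$, notes $\hmd(K_n)=1$, observes that ${\rm diam}(K_n\square K_n)=2$ so $\hmd(K_n\square K_n)=\chi_\mu(K_n\square K_n)$, and invokes \cite[Theorem~3.2]{KKVY} for the growth $\Theta(\sqrt{n})$. (Your parenthetical alternative via Observation~\ref{ob:2} would require knowing that $\mu(K_n\square K_n)=o(n^2)$, which you have not justified, but this is peripheral to the main argument.)
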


On the other hand, we present a sharp lower bound on the $2$DMV chromatic number of the Cartesian product of two graphs, for which we need another definition often used in domination theory.  
A set $S\subseteq V(G)$ is a {\em $2$-packing} in a graph $G$ if for every two vertices $u$ and $v$ in $S$, we have $N_G[u]\cap N_G[v]=\emptyset$. The maximum cardinality of a $2$-packing in $G$ is the {\em $2$-packing number}, $\rho_2(G)$, of $G$. Taking into account the definition of $2$DMV sets, we immediately infer that for every graph $G$, 
\begin{equation}
\label{eq:2pack}
\chi_{\mu_2}(G)\ge \rho_2(G).    
\end{equation}

We can use the $2$-packing number to obtain the mentioned lower bound in Cartesian product of graphs as the following result shows. 
\begin{thm}
\label{prp:cartesian}
If $G$ and $H$ are connected graphs, then 
$$\chi_{\mu_2}(G\square H)\ge \max\{\chi_{\mu_2}(G)\rho_2(H),\chi_{\mu_2}(H)\rho_2(G)\},$$
and the bound is sharp.
\end{thm}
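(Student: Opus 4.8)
The plan is to prove that $\chi_{\mu_2}(G\square H)\ge \chi_{\mu_2}(G)\rho_2(H)$; the other term in the maximum then follows by the commutativity of the Cartesian product. Fix a maximum $2$-packing $P=\{h_1,\ldots,h_{\rho_2(H)}\}$ in $H$, and let $f$ be a $\chi_{\mu_2}(G\square H)$-coloring. For each $h_i\in P$ consider the $G$-fiber $G^{h_i}=\{(g,h_i)\mid g\in V(G)\}$, which (like every $G$-fiber in a Cartesian product) is a convex, hence isometric, subgraph of $G\square H$ isomorphic to $G$. The first step is to observe that the restriction of $f$ to $G^{h_i}$ is a $2$DMV coloring of $G^{h_i}\cong G$: since the fiber is convex, geodesics between two of its vertices stay inside it, so a $u,v$-geodesic of length at most $2$ avoiding the color class internally in $G\square H$ restricts to such a geodesic in the fiber. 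Consequently $f$ uses at least $\chi_{\mu_2}(G)$ distinct colors on each $G^{h_i}$.

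The crucial step is to show that the color sets used on distinct fibers $G^{h_i}$ and $G^{h_j}$, $i\ne j$, are disjoint. Suppose a color $c$ appears on $(g,h_i)$ and on $(g',h_j)$. These two vertices lie in the same color class of the $2$DMV coloring $f$, so either they are adjacent in $G\square H$, or there is a geodesic of length $2$ between them whose middle vertex is not colored $c$. Adjacency would force either $g=g'$ and $h_ih_j\in E(H)$, or $h_i=h_j$ — both impossible since $h_i,h_j$ are distinct and, being a $2$-packing, non-adjacent. In the length-$2$ case, the midpoint $(g'',h'')$ satisfies $d_{G\square H}((g,h_i),(g'',h''))=d_{G\square H}((g'',h''),(g',h_j))=1$, and summing the $H$-coordinates of these two unit steps shows $d_H(h_i,h_j)\le 2$, i.e. $N_H[h_i]\cap N_H[h_j]\ne\emptyset$, contradicting that $P$ is a $2$-packing. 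Hence the color palettes on the $\rho_2(H)$ fibers are pairwise disjoint, each of size at least $\chi_{\mu_2}(G)$, giving $\chi_{\mu_2}(G\square H)\ge \chi_{\mu_2}(G)\rho_2(H)$.

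The main obstacle I anticipate is being careful with the distance bookkeeping in the length-$2$ argument: I must check that a path of length at most $2$ in $G\square H$ between vertices in $G^{h_i}$ and $G^{h_j}$ genuinely forces $d_H(h_i,h_j)\le 2$, using that each edge of $G\square H$ changes at most one coordinate and changes the $H$-coordinate by at most one edge of $H$ (so two edges move the $H$-coordinate a graph-distance of at most $2$). Everything else — convexity/isometry of fibers, the restriction argument via Proposition~\ref{prop:convex} applied fiberwise — is routine. For sharpness, I would exhibit a family where equality holds; a natural candidate is a product such as $P_n\square P_m$ or $C_n\square H$ with suitable parameters, where $\chi_{\mu_2}$ of the factors and the $2$-packing numbers are all known from Proposition~\ref{str0}, and one checks the reverse inequality by a direct coloring construction. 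Concretely, taking $H$ with $\rho_2(H)=1$ (e.g.\ $H$ of diameter at most $2$) and verifying $\chi_{\mu_2}(G\square H)=\chi_{\mu_2}(G)$ for an appropriate $G$ would already demonstrate sharpness of one term, and a symmetric or product-of-paths example handles the general bound.
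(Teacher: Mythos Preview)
Your argument for the inequality is correct and is essentially the paper's proof: convexity of $G$-fibers gives $|f(G^{h_i})|\ge \chi_{\mu_2}(G)$ via Proposition~\ref{prop:convex}, and the $2$-packing condition on $\{h_i\}$ forces the color palettes on distinct fibers to be disjoint. The paper phrases the disjointness step more directly---any vertex of $G^{h_i}$ is at distance at least $3$ from any vertex of $G^{h_j}$ because $d_{G\square H}\big((g,h_i),(g',h_j)\big)=d_G(g,g')+d_H(h_i,h_j)\ge 3$---but your case analysis reaches the same conclusion.

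Where your proposal falls short is sharpness. Your suggested candidates are too vague, and the specific idea of taking $H$ with $\rho_2(H)=1$ and hoping $\chi_{\mu_2}(G\square H)=\chi_{\mu_2}(G)$ is risky: the paper itself notes (just before this theorem) that $\chi_{\mu_2}(K_n\square K_n)$ is unbounded while $\chi_{\mu_2}(K_n)=1$, so the upper bound does not come for free. The paper instead exhibits $P_4\square C_4$, where $\chi_{\mu_2}(P_4)=\chi_{\mu_2}(C_4)=2$, $\rho_2(P_4)=2$, $\rho_2(C_4)=1$, so the bound gives $4$, and an explicit $4$-coloring (Figure~\ref{fig:P4C4}) shows equality. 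It then extends this to an infinite family ${\rm cor}(G)\square C_4$ for any connected $G$ with a perfect matching. You should replace your sharpness sketch with a concrete example along these lines.
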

\begin{proof}
Due to commutativity of the Cartesian product and the symmetry of the expression in the statement, it suffices to prove that $\chi_{\mu_2}(G\square H)\ge \chi_{\mu_2}(G)\rho_2(H)$. For this purpose, let $f:V(G)\times V(H)\to [\chi_{\mu_2}(G\square H)]$ and $S\subseteq V(H)$ be a $\chi_{\mu_2}(G\square H)$-function and a $\rho_2(H)$-set, respectively. Note that for each $h\in S$ the $G$-fiber $G^h$ is convex in $G\square H$, hence by Proposition~\ref{prop:convex}, we infer that $|f(G^h)|\ge \chi_{\mu_2}(G)$. Since for any distinct vertices $h$ and $h'$ in $S$, the distance between any vertex in $G^h$ and any vertex in $G^{h'}$ is at least $3$, we infer that $f(G^h)\cap f(G^{h'})=\emptyset$. Thus, we get
$$\chi_{\mu_2}(G\square H)=|f\big{(}V(G)\times V(H)\big{)}|\geq|\bigcup_{h\in S}{f(G^h)}|=\sum_{h\in S}{|f(G^h)|}\ge \sum_{h\in S}{\chi_{\mu_2}(G)}=\rho_2(H)\chi_{\mu_2}(G).$$

To see that the bound is sharp, we first consider the graph $P_4\square C_4$. Note that $\chi_{\mu_2}(C_4)=2=\chi_{\mu_2}(P_4)$, whereas $\rho_2(C_4)=1<2=\rho_2(P_4)$. Hence, $\max\{\chi_{\mu_2}(P_4)\rho_2(C_4),\chi_{\mu_2}(C_4)\rho_2(P_4)\}=4$. Figure~\ref{fig:P4C4} presents a 2DMV coloring of $P_4\square C_4$ using $4$ colors. Recall that the corona of a graph $G$, ${\rm cor}(G)$, is obtained from $G$ by attaching a leaf to each of its vertices~\cite{FH}. Note that $P_4\cong {\rm cor}(P_{2})$. Now, let $G$ be a connected graph that contains a perfect matching $M$. Let $gg'\in M$, and let $h$ and $h'$ be the leaves attached to $g$ and $g'$, respectively, in ${\rm cor}(G)$. Note that the subgraph of ${\rm cor}(G)\square C_4$ induced by $^{g}C_{4}$ $\cup$ $^{g'}\!C_{4}$ $\cup$ $^{h}C_{4}$ $\cup$ $^{h'}\!C_{4}$ is isomorphic to the graph $P_4\square C_4$, as depicted in Figure~\ref{fig:P4C4}. With this in mind, ${\rm cor}(G)\square C_4$ can be partitioned into $|M|$ disjoint copies of $P_{4}\square C_{4}$, each of which can be colored with respect to $2$DMV coloring by four colors similarly to that of Figure~\ref{fig:P4C4}. This leads to a $2$DMV coloring of ${\rm cor}(G)\square C_4$ with $4|M|=2|V(G)|$ colors. Therefore, $\chi_{\mu_2}({\rm cor}(G)\square C_4)\leq2|V(G)|$, and we end up with equality due to the proved inequality and taking $\rho_{2}\big{(}{\rm cor}(G)\big{)}=|V(G)|$ into account.   
\end{proof}

\begin{figure}[!ht]

\begin{center}
\begin{tikzpicture}[scale=.7]\footnotesize

\begin{scope}<+->;

\node[circle, draw] (x1) at (0,0) {4};
\node[circle, draw] (x2) at  (2,0) {4};
\node[circle, draw] (x3) at (4,0) {3};
\node[circle, draw] (x4) at (6,0) {3};
\node[circle, draw] (y1) at (0,2) {4};
\node[circle, draw] (y2) at (2,2) {1};
\node[circle, draw] (y3) at (4,2) {2};
\node[circle, draw] (y4) at (6,2) {3};
\node[circle, draw] (z1) at (0,4) {1};
\node[circle, draw] (z2) at (2,4) {2};
\node[circle, draw] (z3) at (4,4) {1};
\node[circle, draw] (z4) at (6,4) {2};
\node[circle, draw] (w1) at (0,6) {4};
\node[circle, draw] (w2) at (2,6) {1};
\node[circle, draw] (w3) at (4,6) {2};
\node[circle, draw] (w4) at (6,6) {3};

\draw (x1) -- (x2) -- (x3) -- (x4) -- (y4)--(y3)--(y2)--(y1)--(z1)--(z2)
--(z3)--(z4)--(w4)--(w3)--(w2)--(w1);
\draw (x1)--(y1)--(z1)--(w1);
\draw (x2)--(y2)--(z2)--(w2);
\draw (x3)--(y3)--(z3)--(w3);
\draw (x4)--(y4)--(z4)--(w4);
\draw (x1) .. controls (-0.8,0.5) and (-0.8,5) .. (w1);
\draw (x2) .. controls (1.2,0.5) and (1.2,5) .. (w2);
\draw (x3) .. controls (3.2,0.5) and (3.2,5) .. (w3);
\draw (x4) .. controls (5.2,0.5) and (5.2,5) .. (w4);

\end{scope}

\end{tikzpicture}
\end{center}
\caption{A $\chi_{\mu_2}(P_4\square C_4)$-coloring.}
\label{fig:P4C4}
\end{figure}
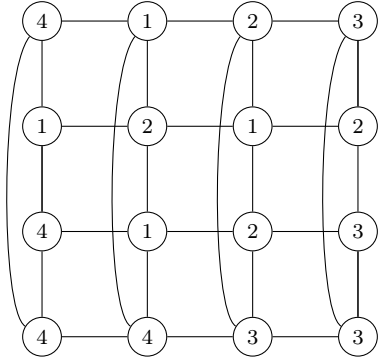

Next, we focus on the class of hypercubes. Recall that $K_2\square \cdots\square  K_2$, where there are $n$ factors in the product, is the {\em $n$-dimension hypercube}, or shortly {\em $n$-cube}, denoted by $Q_n$.   In Corollary~\ref{Cor}, we established that $\chi_{\mu_2}(G)\geq \gamma(G)$ if $G$ is a graph of girth at least $7$. Interestingly, in  hypercubes, the reversed inequality can be proved easily.

We claim that a closed neighborhood of a vertex in $Q_n$ is a $2$DMV set. Let $u\in V(Q_n)$ and $v,w\in N_{Q_{n}}(u)$. Then, $u$, $v$ and $w$ lie in a unique square in $Q_n$. Let $z$ be the fourth vertex of that square. Then, clearly, $vzw$ is a geodesic in $Q_n$ whose internal vertex $z$ is not in $N_{Q_{n}}[u]$. Hence, $N_{Q_{n}}[u]$ is a $2$DMV set, where $u$ was an arbitrarily chosen vertex in $Q_n$. The following observation immediately follows. 

\begin{prop}
\label{prp:Qn}
For any $n\in \mathbb{N}$, $\hmd(Q_n)\le \gamma(Q_n)$. 
\end{prop}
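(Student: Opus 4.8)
The plan is to exhibit an explicit partition of $V(Q_n)$ into $\gamma(Q_n)$ many $2$DMV sets, using a minimum dominating set of $Q_n$ as the ``seed'' of the partition. Let $D=\{u_1,\dots,u_{\gamma(Q_n)}\}$ be a $\gamma(Q_n)$-set. The idea is to assign each vertex of $Q_n$ to a color class indexed by a dominator that is within distance $1$ of it, and then check that each resulting class is a $2$DMV set. Concretely, I would set $D_1=N_{Q_n}[u_1]$ and $D_i=N_{Q_n}[u_i]\setminus\bigcup_{j<i}D_j$ for $i\ge 2$; since $D$ is dominating, these sets partition $V(Q_n)$, and there are exactly $\gamma(Q_n)$ of them (some may a priori be empty, but this only helps the bound).

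The key point, which the text has already isolated in the paragraph preceding the statement, is that every closed neighborhood $N_{Q_n}[u]$ is a $2$DMV set: for $v,w\in N_{Q_n}[u]$ either one of them equals $u$ (so $vw$ is an edge) or $v,w$ are two distinct neighbors of $u$ differing from $u$ in two distinct coordinates, hence they lie together with $u$ and the ``opposite corner'' $z$ on a common $4$-cycle, and $vzw$ is a geodesic of length $2$ whose internal vertex $z$ lies outside $N_{Q_n}[u]$ (indeed $z$ is at distance $2$ from $u$). So each $D_i$, being a subset of the $2$DMV set $N_{Q_n}[u_i]$, is again a $2$DMV set: the witnessing geodesic for a pair in $D_i\subseteq N_{Q_n}[u_i]$ has its unique internal vertex $z$ outside $N_{Q_n}[u_i]$, hence a fortiori outside $D_i$. (Here one uses that subsets of $2$DMV sets are $2$DMV sets, which is immediate from the definition.) Therefore $\{D_1,\dots,D_{\gamma(Q_n)}\}$ is a $2$DMV coloring of $Q_n$, giving $\chi_{\mu_2}(Q_n)\le\gamma(Q_n)$.

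There is essentially no hard part here: the statement is a direct corollary of the observation made in the displayed paragraph about closed neighborhoods in $Q_n$ being $2$DMV sets, combined with the standard ``peel off the closed neighborhoods of a dominating set'' partitioning trick already used in the proof of Theorem~\ref{thm:gammatotal}. The only thing to be slightly careful about is that when we restrict from $N_{Q_n}[u_i]$ to the smaller set $D_i$, the witnessing geodesic stays valid --- but this is automatic since restricting to a subset can only remove internal vertices from the forbidden set, never add them. Consequently the proof is a two-line argument and one can simply record it as an immediate consequence of the preceding remark.

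\begin{proof}
As observed above, for every $u\in V(Q_n)$ the closed neighborhood $N_{Q_n}[u]$ is a $2$DMV set in $Q_n$; consequently every subset of $N_{Q_n}[u]$ is a $2$DMV set as well. Let $D=\{u_1,\dots,u_{\gamma(Q_n)}\}$ be a $\gamma(Q_n)$-set. Set $D_1=N_{Q_n}[u_1]$ and $D_i=N_{Q_n}[u_i]\setminus\bigcup_{j=1}^{i-1}D_j$ for $i\in[\gamma(Q_n)]\setminus\{1\}$. Since $D$ is a dominating set in $Q_n$, the sets $D_1,\dots,D_{\gamma(Q_n)}$ are pairwise disjoint and cover $V(Q_n)$. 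Moreover, each $D_i\subseteq N_{Q_n}[u_i]$ is a $2$DMV set. Hence $\{D_1,\dots,D_{\gamma(Q_n)}\}$ is a partition of $V(Q_n)$ into $2$DMV sets, which yields $\chi_{\mu_2}(Q_n)\le \gamma(Q_n)$.
\end{proof}
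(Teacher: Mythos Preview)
Your proof is correct and follows essentially the same approach as the paper: the paper establishes in the paragraph preceding the proposition that every closed neighborhood $N_{Q_n}[u]$ is a $2$DMV set and then simply states that the proposition ``immediately follows.'' You have spelled out the partitioning step (peeling off closed neighborhoods of a $\gamma(Q_n)$-set) that the paper leaves implicit, which is exactly the intended argument.
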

We strongly suspect that the inequality in the above proposition holds as equality. 
We base this on the following facts.

\begin{prop}
\label{ob:Qn}
 If $S\subseteq V(Q_n)$ consists of vertices that are pairwise at distance at most $2$, then either
 \begin{itemize}
     \item $S\subseteq N_{Q_{n}}[v]$ for some $v\in V(Q_n)$, or
     \item $S$ induces a square, or 
     \item $S$ is a subset of the vertex set of a $Q_3$-subgraph consisting of four vertices of one of its partite sets.  
 \end{itemize}
\end{prop}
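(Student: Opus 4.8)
\textbf{Proof proposal for Proposition~\ref{ob:Qn}.}

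The plan is to argue in terms of the symmetric-difference representation of the hypercube: identify $V(Q_n)$ with $\{0,1\}^n$, so that $d_{Q_n}(x,y)=|x\triangle y|$ (Hamming distance, viewing vertices as subsets of $[n]$). Since the statement is invariant under the automorphism group of $Q_n$, and this group acts transitively on vertices, I would first translate so that some fixed vertex of $S$ is the all-zero vertex $\mathbf 0$. Then every $v\in S$ has weight $|v|\le 2$, i.e.\ $S\subseteq L_0\cup L_1\cup L_2$, where $L_i$ denotes the set of weight-$i$ vertices. Moreover, any two weight-$2$ vertices $v=\{a,b\}$ and $v'=\{c,d\}$ in $S$ must be at distance $\le 2$, which forces $|\{a,b\}\cap\{c,d\}|\ge 1$, i.e.\ the $2$-element sets occurring in $S$ pairwise intersect.

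The combinatorial heart of the argument is the classification of a family $\mathcal F$ of pairwise-intersecting $2$-subsets of $[n]$: by the standard sunflower/Helly-type dichotomy for $2$-sets, either all members of $\mathcal F$ share a common element $t$ (the ``star'' case), or $\mathcal F\subseteq\binom{T}{2}$ for a fixed $3$-set $T$ (the ``triangle'' case $\{ \{a,b\},\{a,c\},\{b,c\}\}$). I would handle these two cases, together with the weight-$\le 1$ part of $S$, as follows. In the star case, let $t$ be the common element; then $S\subseteq\{\mathbf 0\}\cup\{\{i\}:i\in[n]\}\cup\{\{t,j\}:j\ne t\}$. One checks that $\{t\}$ together with any $\{t,j\}$ and any two singletons $\{i\},\{i'\}$ with $i,i'\ne t$ would include $\{i\}$ and $\{i'\}$ at distance $2$ but also $\{i\},\{t,j\}$ at distance $\le 2$ forcing $i\in\{t,j\}$, hence $i=j$; pushing this through shows either $S\subseteq N_{Q_n}[\{t\}]$ (if a weight-$2$ vertex $\{t,j\}$ is present, all singletons in $S$ must be $\{t\}$ or $\{j\}$ and one is reduced to the closed neighborhood of $\{t\}$, possibly after re-centering), or all weight-$2$ vertices are absent and $S\subseteq L_0\cup L_1\subseteq N_{Q_n}[\mathbf 0]$. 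In the triangle case with $T=\{a,b,c\}$, the presence of all three of $\{a,b\},\{a,c\},\{b,c\}$ already forces any singleton $\{i\}\in S$ to be at distance $\le2$ from each, hence $i\in\{a,b,c\}$, and similarly $\mathbf 0\in S$ is at distance $2$ from each weight-$2$ vertex; one then checks these six vertices $\mathbf 0,\{a\},\{b\},\{c\},\{a,b\},\{a,c\},\{b,c\}$ (a subset of the $Q_3$ on coordinates $a,b,c$) cannot all coexist in $S$ because, e.g., $\{a\}$ and $\{b,c\}$ are at distance $3$; a short case check shows the only possibilities are $S$ contained in the closed neighborhood of one of $\mathbf 0,\{a\},\{b\},\{c\}$, or $S=\{\{a,b\},\{a,c\},\{b,c\},T\}$-type configurations — precisely the square $\{a,b\},\{a,c\},\{b,c\}$ plus possibly one more vertex — giving either an induced square or the ``four vertices of one partite set of a $Q_3$'' case $\{\mathbf 0,\{a,b\},\{a,c\},\{b,c\}\}$ (which is the even-weight partite set of the $Q_3$ on $\{a,b,c\}$).

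I expect the main obstacle to be the bookkeeping in the triangle case: one must carefully enumerate which subsets of the seven relevant vertices of the $Q_3$-subgraph are pairwise at distance $\le 2$, and verify that each such subset falls into exactly one of the three listed outcomes (closed neighborhood of a vertex, square, or one partite set of a $Q_3$) — in particular recognizing that a ``square'' here means an induced $4$-cycle, so a set of three vertices forming a path of length $2$ in such a square also counts as being contained in $N_{Q_n}[v]$ for the center $v$, and need not be listed separately. A clean way to organize this is to split on $|S\cap L_2|\in\{0,1,\ge 2\}$ first: if $|S\cap L_2|\le 1$ then $S\subseteq N_{Q_n}[\mathbf 0]$ trivially when $L_2$ is empty, and $S\subseteq N_{Q_n}[\{a,b\}]\cup\{\mathbf 0\}\subseteq$ a closed neighborhood after re-centering when $S\cap L_2=\{\{a,b\}\}$; and if $|S\cap L_2|\ge 2$ invoke the intersecting-family dichotomy above. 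Finally I would double-check that each of the three claimed outcomes genuinely occurs, so the trichotomy is not redundant, which is immediate from the $Q_3$ example.
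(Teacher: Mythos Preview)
Your overall strategy---translate so that $\mathbf 0\in S$, stratify by Hamming weight, and invoke the star/triangle dichotomy for pairwise-intersecting $2$-sets---is a perfectly valid route, and it is genuinely different from the paper's proof, which instead fixes two vertices of $S$ at distance exactly $2$ and analyses the remaining vertices by their last two coordinates. However, your case split contains a concrete error. In the case $|S\cap L_2|=1$, say $S\cap L_2=\{\{a,b\}\}$, you assert that $S$ lies in a closed neighbourhood ``after re-centering''. This is false: the distance constraint forces any singleton in $S$ to be $\{a\}$ or $\{b\}$, so $S\subseteq\{\mathbf 0,\{a\},\{b\},\{a,b\}\}$, and when $S$ is this full four-element set it is \emph{not} contained in any $N_{Q_n}[v]$ (the two diagonal pairs $\mathbf 0,\{a,b\}$ and $\{a\},\{b\}$ are each at distance $2$, so no vertex is within distance $1$ of all four). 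This set is precisely the induced square, i.e.\ the second bullet of the proposition, and it arises here---not in your triangle case.

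Conversely, your triangle-case discussion is muddled as a result. Once $S\cap L_2=\{\{a,b\},\{a,c\},\{b,c\}\}$ (which is forced in the non-star situation), the distance constraints rule out \emph{every} singleton, since $\{a,b\}\cap\{a,c\}\cap\{b,c\}=\emptyset$; together with $\mathbf 0\in S$ this gives exactly $S=\{\mathbf 0,\{a,b\},\{a,c\},\{b,c\}\}$, the even-weight partite set of the $Q_3$ on coordinates $\{a,b,c\}$. No square occurs here, and the weight-$3$ vertex $T=\{a,b,c\}$ that you mention cannot belong to $S$ at all. So the bookkeeping you flagged as ``the main obstacle'' is actually simpler than you anticipate in the triangle case, but you must go back and correctly handle $|S\cap L_2|=1$, splitting it into the closed-neighbourhood outcome (when $|S|\le 3$) and the square outcome (when $|S|=4$).
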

\begin{proof}
For small values of $n$ the result is clear, so we assume that $n\ge 4$. Let $S\subseteq V(Q_n)$ consist of vertices that are pairwise at distance at most $2$ in $Q_n$. If $|S|\le 2$, the statement clearly holds, hence assume $|S|\ge 3$. Consider the representation of the vertices in $Q_n$ as binary $n$-tuples, where two $n$-tuples are adjacent in $Q_n$ whenever they differ in exactly one coordinate. Without loss of generality, we may assume that $x,y\in S$ such that $x=(0,\ldots,0,1)$ and $y=(0,\ldots,0,1,0)$. If the last two coordinates of an $n$-tuple $z\notin\{x,y\}$ are $0,1$ or $1,0$, then $z$ will be at distance greater than $2$ from $y$ or $x$, respectively. Therefore, $z\notin S$. This shows that the last two coordinates of all elements in $S\setminus \{x,y\}$ are either $0,0$ or $1,1$. If the last two coordinates of all vertices in $S\setminus \{x,y\}$ are $0,0$ (resp. $1,1$), then $S\subseteq N_{Q_n}[(0,\ldots,0)]$ (resp. $S\subseteq N_{Q_n}[(0,\ldots,0,1,1)]$) due to the distances of these vertices from $x$ (and $y$). So, assume that there are a vertex $z\in S$ with the last two coordinates $0,0$ and a vertex $w\in S$ with the last two coordinates $1,1$. If $z=(0,\ldots,0)$ and $w=(0,\ldots,0,1,1)$, then $xzywx$ is a square in $Q_n$ and we immediately infer that $S=\{x,y,z,w\}$. Thus, let us assume that $z$, which ends with $0,0$, has at least one coordinate $1$. This, in view of $d_{Q_n}(x,z)\le 2$, implies that there is exactly one such coordinate, say the third from the right. That is, $z=(0,\ldots,0,1,0,0)$. We in turn derive, due to $d_{Q_n}(z,w)\le 2$, that $w=(0,\ldots,0,1,1,1)$. Altogether, we infer that there are no other vertices in $S$ but the four mentioned ones. Moreover, $S$ is a subset of the vertex set of the $3$-cube induced by $S\cup \{(0,\ldots,0),(0,\ldots,0,1,1),(0,\ldots,0,1,1,0),(0,\ldots,0,1,0,1)\}$. We observe that $S$ is a partite set of this $3$-cube. 
\end{proof}

Now, if $C$ is a color class of a 2DMV coloring of $Q_n$, one of the three items of Proposition~\ref{ob:Qn} has to hold for $C$. In fact, the second item is not possible, since for the two opposite vertices of a square there are exactly two geodesics which pass through the other vertices of the square. If for all color classes of a $\hmd(Q_n)$-coloring the first item holds, then $\hmd(Q_n)=\gamma(Q_n)$. Hence, one needs to deal with potential color classes of a $\hmd(Q_n)$-coloring, which consist of only four vertices forming one of the partite sets of a $Q_3$-subgraph of $Q_n$. The mentioned arguments are the basis for our suspicion that $\hmd(Q_n)=\gamma(Q_n)$ could hold for all $n\in \mathbb{N}$.


\section{Block graphs}
\label{sec:block}

Recall that a {\em block} in a graph $G$ is a maximal connected subgraph of $G$ that has no cut-vertex. A graph is a {\em block graph} if all its blocks are cliques. The {\em eccentricity} of a vertex $v\in V(G)$ is defined as $\epsilon_{G}(v)=\max\{d_G(u,v)\mid u\in V(G)\}$. The {\em radius} of $G$, written ${\rm rad}(G)$, is $\min\{\epsilon_{G}(v)\mid v\in V(G)\}$. The {\em center}, $C(G)$, of $G$ is the subgraph induced by the vertices of minimum eccentricity. A vertex $u\in V(G)$ is a {\em radial vertex} of $G$ if there exists a vertex $x\in C(G)$ such that $d_G(u,x)={\rm rad}(G)$.

It was proved in~\cite[Proposition 5.1]{KKVY} that for every block graph $G$,
\begin{equation}
\hm(G)=\left\lceil\frac{{\rm diam}(G)+1}{2}\right\rceil.
    \label{eq:block}
\end{equation}

Recall that for any graph $G$, $\hmk(G)=\hm(G)$ as soon as $k\ge {\rm diam}(G)$. In this section, we characterize block graphs $G$ for which the equality $\hmk(G)=\hm(G)$ extends to the case $k={\rm diam}(G)-1$. 

It is known that the center of a block graph $G$ is a clique. More precisely, two possibilities occur: either the center is a vertex, which is intersected by several blocks in $G$, or it is a block itself. Note that when $|C(G)|=1$, ${\rm diam}(G)=2{\rm rad}(G)$, and so  $\left\lceil\frac{{\rm diam}(G)+1}{2}\right\rceil={\rm rad}(G)+1$. On the other hand, when $|C|\ge 2$, we have ${\rm diam}(G)=2{\rm rad}(G)-1$, so in this case $\left\lceil\frac{{\rm diam}(G)+1}{2}\right\rceil={\rm rad}(G)$. In the characterization of block graphs $G$ in which ${\chi_{\mu_{d-1}}}(G)=\hm(G)$, where $d={\rm diam}(G)$, we will need the following notion. 

Let $C$ be the center of a graph $G$. If $|C|\ge 2$ and $G[C]$ is the corresponding subgraph in $G$, then let $\deg^\ast(C)$ denote the number of components of $G-E(G[C])$ that contain a radial vertex of $G$. If $C=\{c\}$ and $F$ is the set of edges incident with $c$, then let $\deg^\ast(C)$ denote the number of components of $G-F$ that contain a radial vertex of $G$.

\begin{thm}
\label{thm:block}
Let $G$ be a block graph with center $C$, and let $d={\rm diam}(G)$. Then, $\chi_{\mu_{d-1}}(G)=\hm(G)$ if and only if $$\deg^\ast(C)\le \left\lceil\frac{d+1}{2}\right\rceil$$
\end{thm}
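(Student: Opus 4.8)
\textbf{Proof plan for Theorem~\ref{thm:block}.}
The plan is to set $d={\rm diam}(G)$, $r={\rm rad}(G)$, and to analyze separately the two structural possibilities for the center $C$ of a block graph: $|C|=1$ (so $d=2r$ and $\lceil (d+1)/2\rceil = r+1$) and $|C|\ge 2$ (so $d=2r-1$ and $\lceil (d+1)/2\rceil = r$). In both cases I would first exhibit a $(d-1)$DMV coloring with exactly $\hm(G)$ colors whenever $\deg^\ast(C)\le \lceil (d+1)/2\rceil$, using essentially the "onion-layer" coloring that realizes \eqref{eq:block}: color vertices by a function of their distance to $C$, assigning the same color to vertices at distance $i$ and at distance $d-i$ (or the appropriate pairing), and verify that each color class is a $(d-1)$DMV set. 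The only pairs in a color class whose connecting geodesic could have length exactly $d$ are pairs of radial vertices lying in the "deepest" layer; these are precisely the pairs whose geodesics are forced to pass through $C$. The hypothesis $\deg^\ast(C)\le \lceil (d+1)/2\rceil$ is exactly what allows us to distribute the radial vertices in the deepest layer among the available color classes so that no two radial vertices from distinct $\deg^\ast$-components share a color, so every monochromatic pair has a connecting geodesic of length $\le d-1$ (either it avoids $C$ entirely, or one endpoint is not radial).

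For the converse, I would assume $\deg^\ast(C) > \lceil (d+1)/2\rceil = \hm(G)$ and show $\chi_{\mu_{d-1}}(G) > \hm(G)$. By Observation~\ref{ob:1} and \eqref{eq:block}, $\chi_{\mu_{d-1}}(G)\ge \hm(G)$, so it suffices to rule out equality. Pick one radial vertex from each of the $\deg^\ast(C)$ components of $G-E(G[C])$ (resp.\ $G-F$) that contains such a vertex; call this set $R$, so $|R|=\deg^\ast(C) > \hm(G)$. The key geometric claim is that for any two distinct $u,v\in R$ we have $d_G(u,v)=d$, and moreover \emph{every} $u,v$-geodesic passes through $C$ — this uses that $G$ is a block graph and that $u,v$ lie in different components after deleting the center's edges, so the unique "trunk" of the geodesic goes through the cut structure of $C$. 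Consequently, in any $(d-1)$DMV coloring, no two vertices of $R$ may receive the same color (a monochromatic pair in $R$ would need a geodesic of length $\le d-1$, contradiction, since all their geodesics have length $d$). Hence $\chi_{\mu_{d-1}}(G)\ge |R| = \deg^\ast(C) > \hm(G)$, as desired.

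The step I expect to be the main obstacle is the forward direction: constructing the coloring with exactly $\hm(G)$ colors and carefully verifying the $(d-1)$DMV property for \emph{every} monochromatic pair, not just the radial ones. In particular, one must handle pairs of vertices at intermediate distances from the center, pairs lying in the same branch, and the subtle boundary cases where a color class contains both a radial vertex and a near-radial vertex; showing that in all these cases a short-enough ($\le d-1$) geodesic with no internal monochromatic vertex exists requires exploiting the tree-like block structure (every geodesic is essentially determined by the block-cut-tree path) together with the freedom to place internal vertices in differently-colored layers. The bookkeeping is delicate because the parity of $d$ and the distinction $|C|=1$ versus $|C|\ge 2$ shift which layer is "self-paired" and hence which class is most constrained; I would treat these as two clean cases. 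A secondary (but routine) obstacle is confirming the geometric claim that radial vertices in distinct $\deg^\ast$-components are at distance exactly $d$ with all geodesics through $C$ — this follows from the definition of radial vertex, the fact that $C$ is a clique separating these components, and the standard fact that distances in a block graph add along the block-cut-tree.
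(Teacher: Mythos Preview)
Your plan follows essentially the same route as the paper: the necessity direction is identical (radial vertices in distinct $\deg^\ast$-components are pairwise at distance $d$, hence must receive distinct colors in any $(d-1)$DMV coloring, forcing $\chi_{\mu_{d-1}}(G)\ge\deg^\ast(C)$), and for sufficiency both you and the paper split on $|C|=1$ versus $|C|\ge 2$ and build a coloring from the level decomposition $L^{(i)}=\{x:d_G(x,C)=i\}$ refined by the components $C_1,\ldots,C_t$ of $G-E(G[C])$ (resp.\ $G-F$).

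The one place your plan is off is the shape of the coloring itself. A purely ``onion-layer'' scheme---color determined by distance to $C$, with layer $i$ paired to layer $d-i$---does not work: once you give the radial layer $L^{(r-1)}$ (or $L^{(r)}$) component-dependent colors, the layer just below it cannot simply inherit a single color, because the geodesic from a radial vertex in $C_j$ to a vertex one level down in the \emph{same} component $C_j$ passes through that lower level. The paper resolves this with a diagonal (staircase) assignment: $f(L_i^{(r-1)})=\{i\}$ for $i\in[p]$, then $f(L_j^{(r-1-i)})=\{i\}$ for $j>i$ and $f(L_j^{(r-i)})=\{i\}$ for $j<i$, so that within each component the colors shift as the level decreases and no color repeats on consecutive levels of the same component. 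Any remaining inner layers (when $p<r$) get fresh colors $r,r-1,\ldots$ by level. Your instinct that the verification is the delicate step is right, but the construction itself is the non-obvious part; the layer-pairing you sketch would need to be replaced by this diagonal scheme (or something equivalent) before the verification can go through.
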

\begin{proof} 
We first check that the condition for achieving $\chi_{\mu_{d-1}}(G)=\hm(G)$ is necessary. Clearly, if two radial vertices $u$ and $v$ belong to two distinct components of $G-C$, then they are at distance $d$ in $G$. Hence, they need to receive different colors by any $(d-1)$DMV coloring. Therefore, only to color the radial vertices that belong to pairwise distinct components of $G-C$, one needs at least $\deg^\ast(C)$ colors. Thus, 
$$\left\lceil\frac{d+1}{2}\right\rceil=\hm(G)=\chi_{\mu_{d-1}}(G)\ge \deg^\ast(C),$$
as desired.

To prove the sufficiency, let us first consider the case when $|C|\ge 2$. As mentioned  earlier, in this case, $\hm(G)={\rm rad}(G)$. Setting $r={\rm rad}(G)$,
let us assume that $\deg^\ast(C)\le r$. Consider the vertex partition of $G$ into levels depending on the distance from the center $C$ as follows: 
$$L^{(i)}=\{x\in V(G)\mid d_G(x,C)=i\},$$
where $i\in \{0,1,\ldots,r-1\}$. Note that $L^{(0)}=C$ induces a clique and that $G-E(G[C])$ consists $|C|$ components, which we denote by $C_1,\ldots C_{|C|}$. Among these components, exactly $\deg^\ast(C)$ components contain a radial vertex, and we may choose the notation in such a way that these are $C_1,\ldots, C_p$, where $p={\deg^\ast(C)}$. For each $i\in \{0,1,\ldots,r-1\}$ and $j\in[|C|]$, let $L_j^{(i)}=C_j\cap L^{(i)}$. In particular, letting $L_j^{(0)}=\{c_j\}$ for all $j\in[|C|]$, we have $C=\{c_1,\ldots,c_{|C|}\}$. 
Note that $L_j^{(r-1)}\ne \emptyset$ if and only if $j\le p$. On the other hand, for $j>p$, some of the sets $L_j^{(i)}$ can be empty. 

Let us define a coloring $f:V(G)\to [r]$, which we will argue is a $(d-1)$DMV coloring. First, let $f(L_i^{(r-1)})=\{i\}$ for all $i\in[p]$. Next, for all $i\in [p]$ and every $j>i$, let $f(L_j^{(r-1-i)})=\{i\}$ as soon as $L_j^{(r-1-i)}\ne\emptyset$. Then, for all $i\in\{2,\ldots,p\}$ and $j\in[i-1]$, let $f(L_j^{(r-i)})=\{i\}$. 
Now, there are two possibilities. If $p=r$, all vertices of $G$ have already been colored. On the other hand, if $p<r$, then let $f(x)=r-i$ for all $i\in\{0,\ldots,r-p-1\}$ and all vertices $x\in L^{(i)}$ which have not already been colored by $f$. (For an illustration see Fig.~\ref{fig:block} depicting a block graph with the center $C=\{c_1,\ldots,c_4\}$ and the coloring just presented.)

It is straightforward to verify that $f$ is a mutual-visibility coloring. To see that is is also a $(d-1)$DMV coloring, one has to note that the only vertices that are at distance $d$ are such vertices $u$ and $v$, where $u\in L^{(r-1)}_{j}$, $v\in L^{(r-1)}_{j'}$ and $j\ne j'$. However, by the definition of $f$, we have $f(u)=j\neq j'=f(v)$.

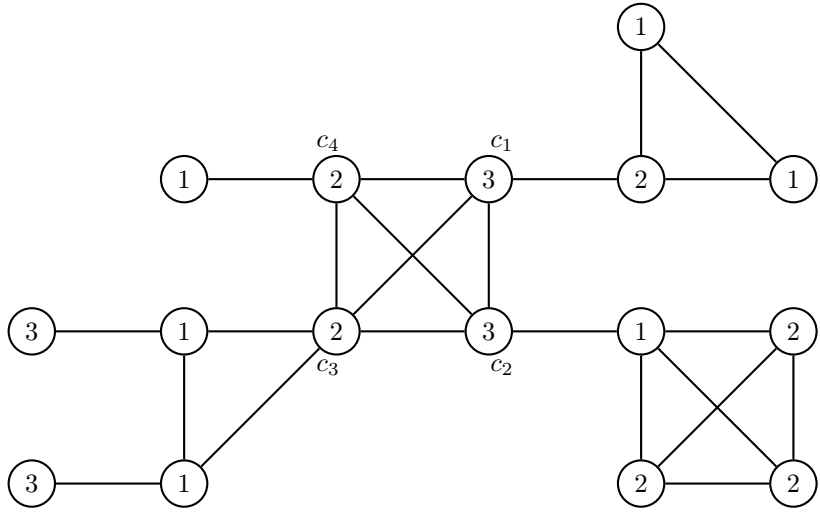
\begin{figure}[htb]
\begin{center}
\begin{tikzpicture}[scale=0.45, style=thick, x=1.5cm, y=1.5cm]

\node[circle, draw] (c3) at (0,0) {2};
\node[circle, draw] (c2) at (3,0) {3};
\node[circle, draw] (c1) at (3,3) {3};
\node[circle, draw] (c4) at (0,3) {2};
\node[circle, draw] (d2) at (6,0) {1};
\node[circle, draw] (d3) at (9,0) {2};
\node[circle, draw] (d3a) at (9,-3) {2};
\node[circle, draw] (d3b) at (6,-3) {2};
\node[circle, draw] (a2) at (6,3) {2};
\node[circle, draw] (a3a) at (9,3) {1};
\node[circle, draw] (a3b) at (6,6) {1};
\node[circle, draw] (b2a) at (-3,0) {1};
\node[circle, draw] (b2b) at (-3,-3) {1};
\node[circle, draw] (b3a) at (-6,0) {3};
\node[circle, draw] (b3b) at (-6,-3) {3};
\node[circle, draw] (u) at (-3,3) {1};

\node[left=0.7cm] at (1.3,-0.7) {$ c_3$};
\node[right=0.7cm] at (1.8,-0.7) {$ c_2$};
\node[right=0.7cm] at (1.8,3.7) {$ c_1$};
\node[left=0.7cm] at (1.3,3.7) {$ c_4$};

\draw (c1) -- (c2) -- (c3) -- (c4) -- (c2);
\draw (c4) -- (c1) -- (c3);
\draw (c2) -- (d2) -- (d3) -- (d3a) -- (d3b);
\draw (d3) -- (d3b) -- (d2) -- (d3a);
\draw (c1) -- (a2) -- (a3a) -- (a3b);
\draw (a2) -- (a3b);
\draw (c4) -- (u);
\draw (b2b) -- (b2a) -- (c3) -- (b2b) -- (b3b);
\draw (b3a) -- (b2a);

\end{tikzpicture}

\caption{A block graph $G$, and the $\hmd(G)$-coloring as given in the proof of Theorem~\ref{thm:block}. Note that the vertices of the center $C$ are labeled, and $\deg^*(C)={\rm rad}(G)=3$. }
\label{fig:block}
\end{center}
\end{figure}

The case when $|C|=1$ can be treated by using a similar partition as above. Note that in this case, $\hm(G)={\rm rad}(G)+1=r+1$, and therefore the assumption is $\deg^\ast(C)\le r+1$. Letting $C=\{c\}$, consider the following partition of $V(G)$ into levels depending on the distance from $c$: 
$$L^{(i)}=\{x\in V(G):\,d_G(x,c)=i\},$$
where $i\in \{0,1,\ldots,r\}$. Note that $L^{(0)}=\{c\}$. Let $F$ be the set of edges incident with $c$, and let $C_1,\ldots C_t$ be the components of $G-F$. By the assumption, $p=\deg^\ast(C)$ of these components contain a radial vertex of $G$, and let them be denoted by $C_1,\ldots,C_p$.
 For each $i\in [r]$ and $j\in [t]$, let $L_j^{(i)}=C_j\cap L^{(i)}$.  
Note that $L_j^{(r)}\ne \emptyset$ if and only if $j\le p$. On the other hand, for $j>p$ some of the sets $L_j^{(i)}$ can be empty. 

We define a coloring $f:V(G)\to [r]$ in a similar way as in the case $|C|\ge 2$. 
We let $f(L_i^{(r)})=\{i\}$ for all $i\in [p]$. Next, for all $i\in[p-1]$ (respectively, $i\in [p-2]$, when $p=r+1$) and every $j>i$, let $f(L_j^{(r-i)})=\{i\}$ as soon as $L_j^{(r-i)}\ne\emptyset$. Then, for all $i\in\{2,\ldots,p\}$ (respectively, $i\in\{2,\ldots,p-1\}$, when $p=r+1$) and $j\in[i-1]$ let $f(L_j^{(r-i+1)})=\{i\}$.  
If $p=r+1$, then let $f(c)=p$. Otherwise, if $p\leq r$, then let $f(L^{(i)})=\{r-i+1\}$ for all $i\in\{0,\ldots,r-p\}$. Again, one can check that $f$ is a $(d-1)$DMV coloring, which completes the proof. 
\end{proof}

\section{Concluding remarks}

In this paper, we initiate the study of the $k$-distance mutual-visibility chromatic number. As with every new concept, there are many open problems and directions that one can explore. Instead of trying to present many such possibilities for further research, we restrict our attention to open problems and aspects that arise from the results in this paper. 
 
Based on the results from the previous section, it would  be interesting to consider the $2$DMV chromatic number of block graphs. In particular, the following problem seems to be challenging.

\begin{prob}
  Characterize the block graphs $G$ with $\hmd(G)=\theta(G)$.  
\end{prob}
\noindent If the above problem is too difficult, its restriction to trees could also give an interesting characterization. 

We wonder whether the bound in Theorem~\ref{prp:cartesian} can be improved by replacing the $2$-packing number in the expression of the lower bound with the domination number of the corresponding factor. We pose this as another open problem. 

\begin{prob}
\label{pr:cartesian}
Is it true that for every two connected graphs $G$ and $H$,    
$$\chi_{\mu_2}(G\square H)\ge \max\{\chi_{\mu_2}(G)\gamma(H),\chi_{\mu_2}(H)\gamma(G)\}?$$
\end{prob}

Observation~\ref{ob:2} gave the general lower bound $\chi_{\mu_k}(G)\geq \left\lceil \frac{|V(G)|}{\mu_k(G)} \right\rceil$ holding for all graphs $G$. The sharpness of the bound can be presented in the case $k=2$ by using what we think is an interesting connection with efficient open domination graphs, the concept studied in~\cite{kpy}. A graph $G$ is an {\em efficient open domination graph} if there exists a subset $D$ (called an efficient open dominating set) of $V(G)$ for which the neighborhoods centered in vertices of $D$ form a partition of $V(G)$.  

Consider a torus graph, $G=C_m\square C_n$, where $m\ge n\ge 4$. As noted in~\cite{kpy}, a torus graph $C_m\square C_n$ is an efficient open domination graph if $m\equiv 0 \pmod 4$ and $n\equiv 0\pmod 4$. In such a case, the mentioned lower bound gives us $\chi_{\mu_2}(C_m\square C_n)\ge \frac{mn}{4}$. 
 Now, given an efficient open dominating set $D=\{x_1,\ldots,x_{mn/4}\}$ of $G$, we let $A_i=N_{G}(x_i)$ for all $i\in[mn/4]$. By the definition of an efficient open dominating set, $A_{i}\cap A_{j}=\emptyset$ for $i\ne j$, and $\bigcup_{i=1}^{mn/4}{A_i}=V(G)$. Hence, the partition $\{A_1,\ldots,A_{mn/4}\}$ is a 2DMV coloring, and so, $\chi_{\mu_2}(C_m\square C_n)\le \frac{mn}{4}$. We infer that $$\chi_{\mu_2}(C_m\square C_n)=\frac{mn}{4},$$
 holds for all $m\equiv 0 \pmod 4$ and $n\equiv 0\pmod 4$.

In Section~\ref{sec:cart}, we mentioned our suspicion regarding the $2$DMV chromatic number of hypercubes, and we end the paper with the corresponding question. 

\begin{ques}
Does $\hmd(Q_n)=\gamma(Q_n)$ hold for all $n\in \mathbb{N}$? 
\end{ques}

\section*{Acknowledgments}
S.B. thanks Kerala State Council for Science,Technology and Environment (KSCSTE) for the financial support.
B.B. acknowledges the financial support of the Slovenian Research and Innovation Agency (research core funding No.\ P1-0297, projects N1-0285, N1-0431).


\end{document}